\documentclass[11pt,a4paper]{amsart}
\usepackage[utf8]{inputenc}
\usepackage[T1]{fontenc}

\pdfoutput=1

\title{Almost planar finitely presented groups}

\author{John M. Mackay}
\address{School of Mathematics, University of Bristol, Bristol, BS8 1UG, UK.}
\email{john.mackay@bristol.ac.uk}
\author{Joseph P. MacManus}
\address{School of Mathematics,  University of Bristol, Bristol, BS8 1UG, UK, and the Heilbronn Institute for Mathematical Research, Bristol, UK.}
\email{joseph.macmanus@bristol.ac.uk}
\author{Davide Spriano}
\address{Mathematical Institute, University of Warwick, Coventry, CV4 7AL, UK.}
\email{davide.spriano@warwick.ac.uk}
\date{4th May, 2026}

\usepackage{amsfonts}
\usepackage{amsmath}
\usepackage{amsthm}
\usepackage{amssymb}

\usepackage[abbrev,alphabetic]{amsrefs}
\usepackage{stmaryrd}
\usepackage{mathrsfs}  
\usepackage{todonotes}
\usepackage{graphicx}
\usepackage{tikz}
\usepackage{quiver}
\usepackage[colorlinks]{hyperref} 
\hypersetup{
    linkcolor=blue,
    citecolor=red,
}
\usepackage{todonotes}
\usepackage{thmtools,thm-restate}

\DeclareMathOperator{\diam}{diam}
\DeclareMathOperator{\length}{length}

\DeclareMathOperator{\dist}{d}

\DeclareMathOperator{\width}{width}

\newcommand{\bdry}{\partial_\infty}

\newcommand{\R}{\mathbb{R}}
\newcommand{\bbH}{\mathbb{H}}
\newcommand{\Z}{\mathbb{Z}}
\newcommand{\N}{\mathbb{N}}

\newcommand{\into}{\hookrightarrow}

\newcommand{\pres}[2]{\langle #1 \ ; \ #2 \rangle}

\makeatletter
\newcommand{\myitem}[1]{%
\item[#1]\protected@edef\@currentlabel{#1}%
}
\makeatother

\newtheorem{theorem}{Theorem}[section]
\newtheorem*{theorem*}{Theorem}

\newtheorem{proposition}[theorem]{Proposition}
\newtheorem{lemma}[theorem]{Lemma}
\newtheorem{corollary}[theorem]{Corollary}
\newtheorem{claim}[theorem]{Claim}

\newtheorem{question}[theorem]{Question}
\newtheorem{conjecture}[theorem]{Conjecture}

\theoremstyle{definition}
\newtheorem{definition}[theorem]{Definition}

\begin{document}

\begin{abstract}
    We show that finitely presented groups which admit $k$-planar Cayley graphs contain finite-index subgroups with planar Cayley graphs.
	More generally, we answer a question of Georgakopoulos and Papasoglu in the special case of coarsely simply connected graphs: a $k$-planar, coarsely simply connected, connected, locally finite, quasi-transitive graph is quasi-isometric to a planar graph.
    

\end{abstract}

\maketitle




\section{Introduction}

The study and classification of \textit{planar groups}---that is, finitely generated groups which admit planar Cayley graphs---is among the most classical topics in (geometric) group theory, beginning with Maschke's classification of the finite planar groups in 1896 \cite{maschke1896representation}. Infinite planar groups were classified much later, first by work of Wilkie \cite{wilkie1966non} and Zieschang--Vogt--Coldewey \cite{zieschang1970flachen} in the one-ended case. The multi-ended planar groups are more delicate, first studied by Droms \cite{droms2006infinite} and later enumerated by Georgakopoulos--Hamann \cite{georgakopoulos2019planari, georgakopoulos2023planar}, completing their classification.

As is typical in geometric group theory, it is often productive to consider properties `up to finite-index'. A finitely generated group is called \emph{virtually planar} if it contains a finite-index planar subgroup. While planar groups are tricky to classify on-the-nose, the class of virtually planar groups admits an easy and well-known description, as precisely those groups which are virtually free products of free and surface groups. Here, the interesting problem is not to classify these groups but to \emph{characterise} them via their geometry. More generally, one can ask the same about locally finite, quasi-transitive graphs which are quasi-isometric to planar graphs; note that every such graph is quasi-isometric to a planar group \cite{macmanus2024note}, and so morally we should expect such graphs to have analogous characterisations.

In this paper we seek such a characterisation by studying locally finite, quasi-transitive graphs which satisfy a weaker planarity hypothesis. 
Recall that a graph $\Gamma$ is said to be \emph{$k$-planar} if $\Gamma$ can be drawn in the plane so that each edge crosses at most $k$ other edges. The study of such graphs has its roots in work of Ringel \cite{ringel1965sechsfarbenproblem}, where attention was restricted to 1-planar graphs. The topic of $k$-planarity for $k > 1$ received attention from graph theorists much later, for the first time in work of Pach and T\"oth in 1997 \cite{pach1997graphs}.
The class of $k$-planar graphs is famously quite fickle. For example, the problem of deciding whether a given finite graph is even 1-planar is $\mathrm{NP}$-complete \cite{grigoriev2007algorithms}. This is in stark contrast to deciding planarity, which is solvable in linear time \cite{hopcroft1974efficient}. 

We will call  a graph \emph{almost planar} if it is $k$-planar for some $k \geq 0$. 
In the context of groups, almost planarity was first considered by Benjamini and Schramm in 1996 \cite{benjamini1996harmonic}.  Among other things, they show that almost planarity is a quasi-isometry invariant amongst bounded-degree graphs, and so it makes sense to speak of an \emph{almost planar group}, meaning a finitely generated group such that some/every Cayley graph of this group is an almost planar graph.  
The study of almost planarity in graphs and groups has appeared implicitly in the literature in many forms for quite some time. We give a brief survey of this below in Section~\ref{sec:intro-obsrtuctions}. 

The following conjecture, which has, on occasion, been referred to as the \emph{$k$-planar conjecture}, is of interest. It was posed by Georgakopoulos and Papasoglu {\cite[Prob.~9.4]{georgakopoulos2025graph}}; see also \cite[Conj.~6.1]{Esperet_Giocanti_2024}.

\begin{conjecture}[The $k$-planar conjecture]\label{conj:k-planar-conjecture}
    Let $\Gamma$ be a connected, locally finite, quasi-transitive
    graph. Then $\Gamma$ is almost planar if and only if it is quasi-isometric to some planar graph. 
\end{conjecture}

The `if' direction of Conjecture~\ref{conj:k-planar-conjecture} follows from the fact that almost planarity is a quasi-isometry invariant amongst bounded degree graphs. The converse poses an interesting challenge. 
The purpose of this article is to settle this conjecture for a large class of quasi-transitive graphs, namely those graphs which are \emph{coarsely simply connected}. See Definition~\ref{def:csc} for a precise definition of this property.

\begin{restatable}{alphtheorem}{mainresult}\label{thm:main-result}
Let $\Gamma$ be a connected, locally finite, quasi-transitive, coarsely simply connected 
    graph. Then $\Gamma$ is almost planar if and only if it is quasi-isometric to some planar graph. 
\end{restatable}

Most notably, the class of  connected, locally finite, quasi-transitive, coarsely simply connected 
    graphs includes Cayley graphs of finitely presented groups. In particular, we deduce the following corollary.

\begin{restatable}{alphcor}{maincorollary}\label{cor:intro-fpgroups}
    A finitely presented group is almost planar if and only if it is virtually planar.
\end{restatable}

The jump from `quasi-isometric to a planar graph' to `virtually planar' in the above follows from results in \cite{macmanus2023accessibility}, though it is also possible to prove this more directly using our intermediate results in this paper. We give such a direct proof in Section~\ref{sec:upgrade-to-qi} below.

Another source of examples comes from hyperbolic graphs, which are always coarsely simply connected \cite[{Prop.~$\Gamma$.3.23}]{bridson2013metric}.

\begin{restatable}{alphcor}{hypcorollary}\label{cor:intro-hypgroups}
    Every almost planar, hyperbolic, connected, locally finite, quasi-transitive graph is quasi-isometric to a planar graph.
\end{restatable}


Our results add to the ever-growing list of characterisations of virtually planar groups, which is known to be a particularly rigid class of groups. Most famously, it is a deep theorem, originating in the work of Mess \cite{mess1988seifert} and completed with the convergence group theorem of Tukia, Gabai, and Casson--Jungreis \cite{tukia1988homeomorphic, gabai1992convergence, casson1994convergence}, that a finitely generated group which is quasi-isometric to a complete Riemannian plane is virtually planar. This was recently extended to the statement that any finitely generated group which is quasi-isometric to a planar graph (indeed, to a `string graph' \cite{davies2025string}) is virtually planar \cite{macmanus2023accessibility}. For finitely presented groups, this characterisation extends even further to groups which are `asymptotically minor-excluded'  \cite{macmanus2025fat}. 
See also \cite{bowditch2004planar, Esperet_Giocanti_2024, macmanus2025vertex, maillot2001quasi} for more entailments of this form.

The proof of Theorem~\ref{thm:main-result} is outlined in Section~\ref{sec:outline} below.  
Some of our intermediate results apply much more broadly, in turn settling Conjecture~\ref{conj:k-planar-conjecture} for many more graphs and groups. For example, we are able to prove the following statement for one-ended graphs. 

\begin{restatable}{alphtheorem}{mapintohypplane}\label{thm:intro-hyperbolic-plane}
    Let $\Gamma$ be a one-ended, connected, locally finite,  quasi-transitive graph. If $\Gamma$ is almost planar, then one of the following must hold:
    \begin{enumerate}
        \item $\Gamma$ is quasi-isometric to the Euclidean plane.

		\item $\Gamma$ admits a regular map into the hyperbolic plane.
    \end{enumerate}
\end{restatable}

A regular map is coarsely Lipschitz and bounded-to-one in a suitable sense, see Definitions~\ref{def:reg-map-graphs} and \ref{def:reg-map-spaces}.  
Note that a complete solution to Conjecture~\ref{conj:k-planar-conjecture} for one-ended graphs would require upgrading the regular map $f : \Gamma \to \mathbb H^2$ in Theorem~\ref{thm:intro-hyperbolic-plane} to be a quasi-isometry; indeed, even a coarse embedding would suffice: we find such an upgrade when $\Gamma$ is coarsely simply connected, see Section~\ref{sec:outline} below for discussion. 

We remark that Theorem~\ref{thm:intro-hyperbolic-plane} has the immediate consequence that a connected, locally finite, one-ended, almost planar, quasi-transitive graph is either quasi-isometric to the Euclidean plane or has logarithmic separation profile, in the sense of Benjamini--Schramm--Tim\'ar \cite{benjamini2012separation}. 
To the best of our knowledge, it is an open problem to exhibit a connected, locally finite, quasi-transitive graph with logarithmic separation profile which is not hyperbolic. We pose the existence of such a graph as a question below (Question~\ref{question:log-sep}).

\subsection{Known obstructions to almost planarity}\label{sec:intro-obsrtuctions}

We now walk through existing obstructions to almost planarity which appear (sometimes in disguise) in the literature to motivate our approach to this problem. We will not define most of the invariants mentioned and instead defer to references, unless they are directly relevant to this paper. 

One of the most straight-forward ways to obstruct almost planarity is through \emph{asymptotic dimension}. It is known that the asymptotic dimension of any planar graph (in fact, any graph excluding some finite minor) is at most two \cite{bonamy2023asymptotic} (see also \cite{fujiwara2021asymptotic, jorgensen2022geodesic}). Since asymptotic dimension is monotone with respect to regular maps \cite{benjamini2012separation}, we deduce that bounded-degree almost planar graphs have asymptotic dimension at most two.

Another powerful coarse invariant applicable here is the \emph{separation profile} $\mathrm{sep}_\Gamma(n)$ of a graph, introduced by Benjamini--Schramm--Tim\'ar in \cite{benjamini2012separation}. It is a consequence of the famous planar separator theorem of Lipton--Tarjan \cite{lipton1979separator} that a planar graph $\Gamma$ satisfies $\mathrm{sep}_\Gamma(n) = O(\sqrt n)$. If $f : \Gamma \to \Lambda$ is a regular map between bounded-degree graphs, it is known that $\mathrm{sep}_\Gamma(n) = O(\mathrm{sep}_\Lambda(n) )$. In particular, if the separation profile of a bounded-degree graph $\Gamma$ asymptotically dominates $\sqrt n$, then it cannot be almost planar. The separation profile has been computed for many classes of groups. This, in turn, settles Conjecture~\ref{conj:k-planar-conjecture} in many specific cases. For example, almost planar solvable groups must be virtually $\Z^2$ or virtually cyclic \cite{hume2020poincare, gournay2023separation} (in particular, virtually planar). This also implies that $BS(n, m)$ is not almost planar for all $|n| \neq |m|$ \cite{hume2022poincare}. Note that the separation profile \emph{doesn't} obstruct almost planarity for, say, $BS(2,2) \cong F_2 \times \Z$ or hyperbolic groups with conformal dimension less than two, both of which have separation profile $\lesssim \sqrt{n}$ \cite[Lem.~7.2]{benjamini1996harmonic}, \cite{hume2022poincare}.

Separation profiles are generalised by the family of invariants known as \emph{Poincar\'e profiles} $\Lambda_\Gamma^p(r)$, where $p$ ranges over $ [1,\infty)$. This recovers the separation profile via $\mathrm{sep}_\Gamma \simeq \Lambda_\Gamma^1$.  Spielman--Teng's bound on the eigenvalues of the Laplacian for bounded degree planar graphs implies the following, which we record as it is not explicitly in the literature.
\begin{theorem}\label{thm:planar-poincare-profile}
	For any planar graph $\Gamma$ of bounded degree, we have
        \begin{equation*}
                \Lambda_\Gamma^p(r) \lesssim
                \begin{cases}
                        \sqrt{r} & \text{ for } p \in [1,2], \text{ or } \\
                        r^{1-1/p} & \text{ for } p \in [2,\infty),
                \end{cases}
        \end{equation*}
	and this is sharp.
\end{theorem}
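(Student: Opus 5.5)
The plan is to recall the definition of the Poincaré profile from Hume--Mackay--Tessera:
\[
\Lambda^p_\Gamma(r)=\sup\{|V(\Gamma')|\,h^p(\Gamma') : \Gamma'\le\Gamma,\ |V(\Gamma')|\le r\},
\]
where $h^p(\Gamma')$ is the $L^p$-Poincaré constant of the subgraph, i.e.\ the infimum of $\|\nabla f\|_p/\|f-f_{\Gamma'}\|_p$ over nonconstant $f$. The upper bounds then reduce to estimating $h^p$ on a single finite planar graph of bounded degree and size $n\le r$. Throughout I take finite subgraphs $\Gamma'$, which are again planar with the same degree bound $D$.

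\textbf{Step 1: the case $p=2$.} Spielman--Teng showed that a bounded-degree planar graph on $n$ vertices has Fiedler value $\lambda_2\lesssim_D 1/n$. Since $h^2(\Gamma')^2$ is comparable to $\lambda_2$ (the Rayleigh quotient, up to degree normalisation), we get $h^2(\Gamma')\lesssim n^{-1/2}$. Hence $n\,h^2\lesssim\sqrt n\le\sqrt r$.

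\textbf{Step 2: interpolation to other $p$.} I would use the monotonicity and comparison results for $h^p$ across $p$ from Hume--Mackay--Tessera. For $p\in[1,2]$ they give $\Lambda^p\lesssim\Lambda^q$ for $p\le q$, which follows from a Hölder-type argument, and I would also combine this with the separator bound $\Lambda^1\simeq\mathrm{sep}\lesssim\sqrt r$ from Lipton--Tarjan. Taking the case $p=1$ together with $p=2$ then gives $\sqrt r$ for all $p\in[1,2]$.

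For $p\ge2$ I would take the Spielman--Teng test function $f$ with $f_{\Gamma'}=0$ and Rayleigh quotient $\lesssim1/n$, and raise it to a power. Concretely, set $g=|f|^{p/2}\operatorname{sgn} f$, suitably recentred, and estimate $\|\nabla g\|_p^p/\|g\|_p^p$ via the mean value inequality and Hölder's inequality. The goal is the bound $h^p\lesssim n^{-1/p}$.

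The simpler alternative for $p\ge2$ is a direct test function. Use a planar separator of size $\lesssim\sqrt n$ recursively to get a set $A$ with $|A|\approx n/2$ and small boundary, and let $f$ equal $1$ on $A$ and $-1$ off $A$. This gives $\|\nabla f\|_p^p\lesssim\sqrt n$ and $\|f\|_p^p\approx n$, so $h^p\lesssim n^{-1/(2p)}$. That is worse than needed. The required bound $r^{1-1/p}$ corresponds to $h^p\lesssim n^{-1/p}$, which is essentially trivial: take $f$ to be the indicator of a single vertex, recentred. Then $\|\nabla f\|_p^p\lesssim D$ and $\|f-f_{\Gamma'}\|_p^p\gtrsim1$, so $h^p\lesssim1$. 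I must therefore recheck normalisations; presumably $h^p$ uses the normalised $L^p$ norm, making the general bound $\Lambda^p\lesssim r^{1-1/p}$ hold for all bounded-degree graphs once the $\sqrt r$ bound is combined with the trivial one. In that reading, for $p\ge2$ the bound $r^{1-1/p}\ge\sqrt r$ follows from $\Lambda^p\le$ the general bound for bounded-degree graphs in Hume--Mackay--Tessera, and the real content is the range $[1,2]$.

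\textbf{Sharpness.} For $p\in[1,2]$, the grid $\mathbb Z^2$ has $\Lambda^p\simeq\sqrt r$ for all $p$ (Hume--Mackay--Tessera). For $p\ge2$, I would use a planar tree or a hyperbolic planar graph; for instance, $\mathbb H^2$ tilings have $\Lambda^p\simeq r^{1-1/p}$ for $p>1$ by the conformal dimension $1$ computation in Hume--Mackay--Tessera.

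\textbf{Main obstacle.} I expect the main obstacle to be getting the normalisation right, so that the interpolation between $p=1$ and $p=2$ is legitimate. I would first try the explicit monotonicity lemma $\Lambda^p\lesssim\Lambda^q$ for $p\le q$ from Hume--Mackay--Tessera, applied with $q=2$.
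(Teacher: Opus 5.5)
\textbf{The case $p \le 2$ is fine; the case $p>2$ has a genuine gap.}

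Your treatment of $p=2$ matches the paper: Spielman--Teng, with $h^2(\Gamma')\asymp_D\sqrt{\lambda_2(\Gamma')}$. So does your treatment of $p\in[1,2]$: the monotonicity $\Lambda^p\lesssim\Lambda^q$ for $p\le q$ from Hume--Mackay--Tessera, applied with $q=2$; the Lipton--Tarjan input is then redundant. Your sharpness examples also work. The paper instead uses a single graph, a $\mathbb{Z}^2$ quadrant glued to a $3$-regular tree, to get sharpness for all $p$ at once.

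The gap is the range $p>2$, where your final resolution is false. There is no bound $\Lambda^p\lesssim r^{1-1/p}$ valid for all bounded-degree graphs. The general bound is only $\Lambda^p\lesssim r$, and it is attained by graphs containing expander families. For instance, $\Lambda^p_{\mathbb{Z}^3}(r)\simeq r^{2/3}$ for every $p$, which exceeds $r^{1-1/p}$ when $2\le p<3$.

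Renormalising the norms cannot change this. $h^p$ is a ratio of two $L^p$ norms over the same vertex set, so it is invariant under rescaling the counting measure. Monotonicity in $p$ does not help either: it gives $\Lambda^2\lesssim\Lambda^p$ for $p\ge2$, which is the wrong direction. The planar input at $p=2$ therefore has to be transported upward by an interpolation inequality $h^p(\Gamma')\lesssim h^2(\Gamma')^{2/p}$, equivalently $\Lambda^p(r)\lesssim r^{1-2/p}\Lambda^2(r)^{2/p}$. This yields $r^{1-2/p}\cdot r^{1/p}=r^{1-1/p}$, and it is what the paper imports from Corollary~2.2 of Hume--Mackay's round-trees paper.

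\textbf{How to repair it.} Your power-raising idea is the right instinct, but the exponent is inverted. For $g=|f|^{p/2}\operatorname{sgn}f$ the mean value inequality produces a factor $|f|^{p/2-1}$, which the $L^2$ Rayleigh quotient does not control. Instead, proceed as follows.
\begin{enumerate}
\item Take $f$ with $\|\nabla f\|_2\le 2h^2(\Gamma')\|f-f_{\Gamma'}\|_2$ and subtract a median $m$. Note that $\|f-f_{\Gamma'}\|_2\le\|f-m\|_2$.
\item Let $u\ge0$ be whichever of $(f-m)_\pm$ has the larger $L^2$ norm. Then $\|\nabla u\|_2\lesssim h^2(\Gamma')\|u\|_2$, and $u$ vanishes on at least half the vertices.
\item Put $g=u^{2/p}$. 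Since $t\mapsto t^{2/p}$ is $(2/p)$-H\"older with constant $1$ on $[0,\infty)$, you get $|\nabla g|^p\le|\nabla u|^2$ pointwise, while $g^p=u^2$.
\item The support condition gives $\|g-g_{\Gamma'}\|_p\ge\frac12\|g\|_p$.
\end{enumerate}
Hence $h^p(\Gamma')\lesssim h^2(\Gamma')^{2/p}\lesssim n^{-1/p}$, which is exactly the estimate your proposal is missing for $p>2$.
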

\begin{proof}
	The upper bound for $p=2$ is by Spielman--Teng~\cite[Thm.~3.3]{spielman-teng-07-spectral-planar}.
	The bound for $p < 2$ follows from~\cite[Prop.~5]{hume2020poincare}, and for $p >2$ from~\cite[Corollary 2.2]{hume-mackay-25-round-trees}.
 This is sharp when, for example, $X$ is given by attaching a $\mathbb{Z}^2$ quadrant and $3$-regular tree.
\end{proof}
Poincar\'e profiles are also monotone under regular maps, and have also been computed for certain classes of groups. For example, by \cite[Thm.~1.12]{hume2022poincare}, we have for $n > 1$ that $\Lambda_{F_n\times\mathbb{Z}}^2 (r) \simeq r^{2/3} \not\lesssim r^{1/2}$. In particular, this implies that $BS(n,\pm n)$ is almost planar if and only if $n = 1$, which settles Conjecture~\ref{conj:k-planar-conjecture} for Baumslag--Solitar groups. 


In a completely different direction, we can also use the non-existence of certain harmonic functions to obstruct almost planarity. This is due to the work of Benjamini--Schramm. In \cite{benjamini1996harmonic}, they show that if $\Gamma$ is a transient, bounded-degree, connected, almost planar graph, then $\Gamma$ admits a non-constant bounded harmonic function with finite Dirichlet energy. This is a very powerful obstruction, ruling out almost planarity for a vast number of graphs (including $F_2 \times \Z$, see discussion before \cite[Lem.~7.2]{benjamini1996harmonic}). Most dramatically, it is a theorem of Medolla--Soardi that locally finite, quasi-transitive, amenable graphs do not admit such harmonic functions \cite[Thm.~5.2]{medolla1995extension}. Since non-transient locally finite quasi-transitive graphs are either finite or quasi-isometric to either $\Z$ or $\Z^2$ \cite[Thm.~5.13]{woess2000random}, this settles Conjecture~\ref{conj:k-planar-conjecture} for  amenable groups. 
This result also appears in another form in Woess' monograph; see \cite[p.~138]{woess2000random}. Furthermore, when combined with a theorem of Georgakopoulos \cite{georgakopoulos2010lamplighter}, this also implies that many wreath products are not almost planar; in particular the lamplighter group $\Z_2 \wr \Z$ is not almost planar. 

So where does this leave us? Consider the group given by the following presentation: 
$$
G = \pres{a,b,c,d,e,f}{[a,b]=[c,d]=[e,f]}.
$$
The presentation complex of $G$ is depicted in Figure~\ref{fig:bad-group}. 
\begin{figure}[h]
    \centering
    \tikzset{every picture/.style={line width=0.75pt}} 

\begin{tikzpicture}[x=0.75pt,y=0.75pt,yscale=-1,xscale=1]

\draw   (219.33,126.62) .. controls (219.33,108.91) and (251.72,94.56) .. (291.67,94.56) .. controls (331.62,94.56) and (364,108.91) .. (364,126.62) .. controls (364,144.32) and (331.62,158.67) .. (291.67,158.67) .. controls (251.72,158.67) and (219.33,144.32) .. (219.33,126.62) -- cycle ;
\draw    (291.67,94.56) .. controls (302.9,109.02) and (304.7,138.87) .. (291.67,158.67) ;
\draw  [dash pattern={on 0.84pt off 2.51pt}]  (291.67,94.56) .. controls (281.03,109.96) and (277.44,135.41) .. (291.67,158.67) ;
\draw    (291.67,94.56) .. controls (310.99,69.43) and (327.16,59.37) .. (337.34,60) .. controls (347.53,60.63) and (367.59,78.22) .. (291.67,158.67) ;
\draw    (336.48,76.93) .. controls (327.94,81.46) and (323.71,81.43) .. (317.49,78.23) ;
\draw    (332.91,78.41) .. controls (327.79,73.83) and (325.4,74.49) .. (320.8,79.63) ;
\draw    (258.87,123.78) .. controls (250.18,127.98) and (245.95,127.79) .. (239.85,124.35) ;
\draw    (255.25,125.12) .. controls (250.3,120.35) and (247.88,120.92) .. (243.11,125.88) ;
\draw    (348.13,124.72) .. controls (339.43,128.92) and (335.2,128.73) .. (329.11,125.3) ;
\draw    (344.5,126.06) .. controls (339.55,121.29) and (337.14,121.86) .. (332.37,126.82) ;

\end{tikzpicture}
    \caption{}
    \label{fig:bad-group}
\end{figure}

This is a one-ended hyperbolic group which is not virtually Fuchsian. According to Conjecture~\ref{conj:k-planar-conjecture}, it should not be almost planar. However, it slips through the net of all existing obstructions described above; it has asymptotic dimension equal to 2, its Poincar\'e profiles all satisfy the bound of Theorem~\ref{thm:planar-poincare-profile} \cite[Thm.~1.17]{hume-mackay-25-round-trees}, and it admits a non-constant bounded harmonic function with finite Dirichlet energy (as does every non-elementary hyperbolic group \cite{ancona2007positive}). 
It is this particularly troublesome group which motivated the toolbox developed in this paper, and indeed it follows from Corollary~\ref{cor:intro-fpgroups} that this group is not almost planar.

\subsection{Outline of the proof of Theorem~\ref{thm:main-result}}\label{sec:outline}

We now describe the key steps in the proof of our main result. 

The first step is to prove Theorem~\ref{thm:intro-hyperbolic-plane}. To do so, we begin with a regular map $\Gamma\to \Lambda$ where $\Lambda$ is a planar, one-ended, bounded degree graph, and $\Gamma$ is a one-ended, locally finite, quasi-transitive graph; using one-ended-ness and a theorem of Richter--Thomassen \cite{richter20023}, we may assume that the drawing of $\Lambda$ in the plane is VAP-free.\footnote{\emph{VAP-free} stands for \emph{vertex accumulation point-free}, meaning that the image of the vertex set of $\Lambda$ in the plane is discrete.}  We then fill in discs and half-planes in the complement of $\Lambda$ with disks and half-planes from the hyperbolic plane to get a planar complex $P$.  If $P$ is not hyperbolic, this will be seen by bi-Lipschitz embedded loops in $P$ which---having limited interaction with the hyperbolic balls and half-planes---can be pulled back to $\Gamma$ to find subsets whose isoperimetric properties imply (by the Varopoulos inequality) that $\Gamma$ has at most quadratic growth. In particular, this implies that either $\Gamma$ is quasi-isometric to the Euclidean plane, or $P$ is Gromov-hyperbolic. We go on to show that our plane $P$ is in fact quasi-isometric to the hyperbolic plane $\bbH^2$ (see Theorem~\ref{thm:planes-QI-to-hypplane}), which implies Theorem~\ref{thm:intro-hyperbolic-plane}. 

We then proceed with the proof of Theorem~\ref{thm:main-result}. 
Applying results of Hamann on accessible graphs \cite{hamann2018accessibility, hamann2025tree}, which are graph-theoretic generalisations of theorems of Dunwoody \cite{dunwoody1985accessibility} and Papasoglu--Whyte \cite{papasoglu2002quasi}, we immediately reduce to the case that $\Gamma$ is one-ended.
Using Theorem~\ref{thm:intro-hyperbolic-plane}, we obtain that each such graph is either quasi-isometric to the Euclidean plane, or it admits a regular map into the hyperbolic plane. Without loss of generality, we may assume we are in the latter case. To prove Theorem~\ref{thm:main-result}, we need to promote this regular map into the hyperbolic plane to be a quasi-isometry. 
Assuming coarse simply connectedness allows us to replace the graph $\Gamma$ with a 2-dimensional, simply connected, cocompact simplicial complex $X$ admitting a continuous regular map into $\mathbb{H}^2$. 

The core intuitive idea is that we want to map $X$ into $\mathbb{H}^2$ `avoiding folds', essentially meaning that we want to avoid something like the absolute value map $\mathbb{R} \to \mathbb{R}_{\geq 0}$. Morally, such folds are what makes general regular maps far from being quasi-isometries. If one was guaranteed that nothing of the sort would happen, it is believable that a `foldless' regular map $X \to \mathbb{H}^2$ must force $X$ to have a quite restricted geometry. 

Of course, our map $f\colon X\to\mathbb{H}^2$ might not be so well-behaved. Our strategy consists in substituting such map with an improved one. Specifically, there are two properties that we want our improved map to have. The first one is called \emph{width-visibility} (Definition~\ref{defn:widthvisible}), and morally asks for bounded-diameter preimages on a net. To obtain a width-visible map we argue that if the diameter of $f^{-1}(x)$ was not uniformly bounded, we can use transitivity and Arzel\'a--Ascoli to modify the map and reduce a certain notion of complexity. The second one takes inspiration from the methods of Bonk and Kleiner used in \cite{bonk2002rigidity}, and is the notion of \emph{stability} (Definition~\ref{defn:stability}). Roughly, we say that a map is \emph{stable} at a point $x \in X$ if every `modification' $f'$ of $f$ supported in some ball around $x$ still covers $f(x)$. In other words, it is impossible to `push' the image of $f$ so that its image no longer covers $f(x)$. A non-example is depicted in Figure~\ref{fig:stability}. Applying an asymptotic dimension argument together with the Arzel\`a--Ascoli theorem again, we are able to produce another new map which has stable points, and moreover is `very stable': every point is stable for every size of ball.  This latter notion can be interpreted as a strong notion of surjectivity.

We then finish the proof by arguing that a map that is both very stable and width-visible must be a quasi-isometry. In particular, our graph is actually quasi-isometric to $\bbH^2$, which concludes the proof of Theorem~\ref{thm:main-result}.

\begin{figure}[ht]
		\begin{tikzpicture}
			\node[anchor=south west,inner sep=0] (image) at (0,0) {\includegraphics[width=0.8\textwidth]{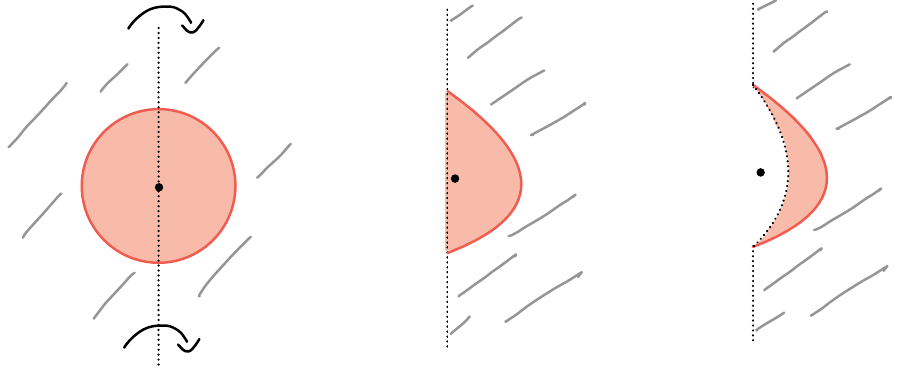}};
			\begin{scope}[x={(image.south east)},y={(image.north west)}]
				\node at (0.18, 0.43) {$x$};
                \node at (0.5, 0.44) {$f(x)$};
                \node at (0.835, 0.45) {$f(x)$};
                \node at (0.07, 0.85) {$X$};
                 \node at (0.45, 0.85) {$f(X)$};
                 \node at (0.79, 0.85) {$f'(X)$};
			\end{scope}
    \end{tikzpicture}\caption{Folds create non-stability: the map $f$, which folds the plane onto itself, is not stable because there is a modification $f'$ that agrees with $f$ outside the red ball and that avoids $f(x)$.}\label{fig:stability}
	\end{figure}

\subsection{Questions}

We conclude this introduction by stating two existential questions. A negative answer to both of these questions would settle Conjecture~\ref{conj:k-planar-conjecture} in its entirety. 

Firstly, as was remarked above, Theorem~\ref{thm:intro-hyperbolic-plane} implies that any almost planar, one-ended, connected, locally finite, quasi-transitive graph $\Gamma$ is either quasi-isometric to the Euclidean plane (in which case, we are very happy), or it has logarithmic separation profile. Currently, all known examples of finitely generated groups with logarithmic separation profile are hyperbolic. Even then, the structure of hyperbolic groups with such a separation profile is quite restrained \cite{lazarovich2025hyperbolic,hume-mackay-25-round-trees}. It is not inconceivable that these could be the only examples; if so, this would settle Conjecture~\ref{conj:k-planar-conjecture} entirely in the one-ended case. We thus pose this as a question (cf. \cite[Question 1.5]{hume2020poorly}).

\begin{question}\label{question:log-sep}
    Does there exist a connected, locally finite, quasi-transitive graph with logarithmic separation profile which is not hyperbolic?
\end{question}

Of course, a positive answer would not be entirely surprising, as the world of non-finitely presented groups is full of paradoxical constructions. For example, it is known that there exists examples of finitely generated groups---in particular, examples of so-called \emph{lacunary hyperbolic groups}---whose separation profile grows arbitrarily slowly on an infinite subset of its domain (though may explode outside of this subset); see \cite[Thm.~1.4]{hume2020poorly}.

There is also the issue of \emph{accessibility} to contend with. There are inaccessible groups which do not contain any one-ended subgroups (for example, see some of the groups constructed in \cite{dunwoody1993inaccessible}). Directly answering the following question is therefore imperative to settling Conjecture~\ref{conj:k-planar-conjecture}.

\begin{question}\label{question:inaccessible}
    Does there exist an inaccessible, connected, locally finite, quasi-transitive graph which is almost planar?
\end{question}

The fact that an inaccessible transitive graph cannot be quasi-isometric to a planar graph was the main technical result of \cite{macmanus2023accessibility}, and it is likely that resolution to Question~\ref{question:inaccessible} would involve reducing to this case. It is also known---a fact which is due to Esperet, Giocanti, and Legrand-Duchesne---that they cannot be minor-excluded \cite{esperet2024structure}. Outside of these results, our current understanding of the geometry of inaccessible transitive graphs is quite limited.

\subsection*{Acknowledgements}

The first author would like to thank the Isaac Newton Institute for Mathematical Sciences, Cambridge, for support and hospitality during the programme ``Operators, Graphs, Groups'', where work on this paper was undertaken; this work was supported by EPSRC grant EP/Z000580/1.
He also thanks David Hume, Romain Tessera and, especially, Mario Bonk for discussions around Theorem~\ref{thm:planar-poincare-profile}.
The second author was supported by the Additional Funding Programme for Mathematical Sciences, delivered by EPSRC (EP/V521917/1) and the Heilbronn Institute for Mathematical Research. The third author was supported by a Royal Society University Research Fellowship  URF\textbackslash R1\textbackslash 251707, and he would like to thank Federico Vigolo for discussions on an early version of this paper.  


\section{Preliminaries}

In this section we establish our notational and terminological conventions.



\subsection{Coarse geometry}
In this paper we will often consider several metric spaces at the same time, to avoid confusion we will use subscripts to denote the metric space we are referring to. For instance, $\dist_X$ refers to the distance in the metric space $X$. The \emph{(closed) neighbourhood} of a set $A$ is the set $B_X(A, r)$ of points in $X$ at distance $r$ or less from $A$. We may sometimes suppress the decorative subscripts from our notation if the space under consideration is clear from context.

A map $f$ between metric spaces $X,Y$ is \emph{$(\kappa, \varepsilon)$--quasi-isometric embedding} if for any pair of points $x,y\in X$ it holds \[\frac{1}{\kappa} \dist_X (x,y) - \varepsilon \leq^{(1)} \dist_Y(f(x),f(y)) \leq^{(2)} \kappa \dist_X(x,y) + \varepsilon.\]
If only the inequality $(2)$ holds, we say that $f$ is \emph{$(\kappa, \varepsilon)$--coarsely Lipschitz}. A map is $\kappa$--bi-Lipschitz if it is a $(\kappa, 0)$--quasi-isometric embedding. 
A map is a \emph{$(\kappa, \varepsilon)$--quasi-isometry} if it is a $(\kappa, \varepsilon)$--quasi-isometric embedding and it is $\varepsilon$--coarsely surjective, meaning $B_Y(f(X), \varepsilon) = Y$.

A metric space $X$ is said to have \emph{bounded geometry} if for all $r \geq 0$, there exists $n > 0$ such that every closed ball of radius $r$ in $X$ can be covered by $n$ balls of radius 1. Note that if $X$ is a graph, this is equivalent to asking that $X$ have bounded degree. 

The following definition will play a central role.

\begin{definition}[Coarsely simply connected]\label{def:csc}
    Let $\Gamma$ be a graph and $m\geq 3$ an integer. The \emph{$m$-filling} of $\Gamma$, denoted $X_m(\Gamma)$, is the polygonal 2-complex obtained by attaching 2-cells to all cycles of length at most $m$. 
    We say that  $\Gamma$ is \emph{coarsely simply connected} if there exists $m \geq 3$ such that $X_m(\Gamma)$ is simply connected.
\end{definition}

The motivating examples include Cayley graphs of finitely presented groups, as well as hyperbolic graphs.

\subsection{Regular maps}\label{ssec:reg-maps}

We now define the notion of a regular map. We will give two definitions, one for graphs and the other for generic metric spaces. 

\begin{definition}[Regular maps between graphs \cite{benjamini2012separation}]\label{def:reg-map-graphs}
    Let $\Gamma$, $\Lambda$ be graphs, and $\kappa \geq 1$ a constant. Then, a map $f : V(\Gamma) \to V(\Lambda)$ is said to be \emph{$\kappa$-regular} if the following hold:
\begin{enumerate}
    \item The map $f$ is $(\kappa,\kappa)$-coarsely Lipschitz. 
    \item For all $y \in V(\Lambda)$, we have that $|f^{-1}(y)| \leq \kappa$. 
\end{enumerate}
\end{definition}

We will often abuse notation and write $f : \Gamma \to \Lambda$, meaning $f : V(\Gamma) \to V(\Lambda)$.

\begin{definition}[Regular maps between metric spaces {\cite[Def.~1.3]{benjamini1996harmonic}}]\label{def:reg-map-spaces}
    Let $X$, $Y$ be metric spaces. 
    Then, a map $f : X \to Y$ is said to be \emph{$\kappa$-regular} if the following hold:
\begin{enumerate}
    \item The map $f$ is $(\kappa,\kappa)$-coarsely Lipschitz. 
    \item For all $y \in Y$, we have that $f^{-1}(B_Y(y,1))$ can be covered by $\kappa$ balls of radius $1$. 
\end{enumerate}
\end{definition}

Note that we do not require $f$ to be continuous in the above definition. However, continuity will often be desirable, and so we may speak of \emph{continuous regular maps}. 

Regular maps arise naturally from subgraph inclusion, and from coarse embeddings (e.g.\ quasi-isometric embeddings) of graphs.  For example, if $G$ and $H$ are finitely generated groups and $H$ includes into $G$ as a subgroup, then this inclusion induces a regular map between the Cayley graphs of $G$ and $H$.

\subsection{Almost planar graphs and groups}\label{sec:prelims-almost-planar}

Recall that a graph is said to be \emph{$k$-planar} if it can be drawn in the plane such that every edge crosses at most $k$ other edges. For example, 0-planar graphs coincide with planar graphs. We say that a graph is \emph{almost planar} if it is $k$-planar for some finite $k \geq 0$. The minimal $k$ such that a graph is $k$-planar is sometimes called the \emph{local crossing number} of the graph.

The following is due to Benjamini and Schramm \cite{benjamini1996harmonic} (in fact, it is their original definition of almost planarity), and plays a central role throughout this paper.

\begin{theorem}[{\cite[Thm.~1.8]{benjamini1996harmonic}}]\label{prop:almost-planar-regular-map-char}
    Let $\Gamma$ be a bounded-degree, connected graph. Then $\Gamma$ is almost planar if and only if there exists a planar, bounded-degree, connected graph $\Lambda$ and a regular map $\rho : \Gamma \to \Lambda$. 
\end{theorem}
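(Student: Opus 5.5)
Both directions go through an explicit construction, one a planarisation of a drawing and the other a redrawing along a regular map.

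\emph{($\Rightarrow$)} Fix a $k$-planar drawing of $\Gamma$; since $\Gamma$ has degree at most $d$, each edge meets at most $k$ others. We planarise by placing a new vertex at every crossing point; we may assume crossings are transverse, no three edges pass through one point, and finitely many crossings lie on each edge. This gives a planar graph $\Lambda$ with $V(\Gamma)\subseteq V(\Lambda)$. Each original vertex keeps degree at most $d$ and each crossing vertex has degree $4$, so $\Lambda$ has bounded degree. It is connected because $\Gamma$ is. Every edge of $\Gamma$ is subdivided into at most $k+1$ edges of $\Lambda$, so the inclusion $\rho\colon V(\Gamma)\to V(\Lambda)$ is $(k+1)$-Lipschitz. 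It is also injective, and hence regular.

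\emph{($\Leftarrow$)} Let $\rho\colon\Gamma\to\Lambda$ be $\kappa$-regular, with $\Lambda$ planar, connected and of degree at most $D$. Fix a planar drawing of $\Lambda$. Each vertex $v\in V(\Gamma)$ is placed at a point very close to $\rho(v)$; fibres have size at most $\kappa$, so the points of a fibre can be taken distinct inside a small disc around $\rho(v)$. For an edge $uv$ of $\Gamma$ we have $\dist_\Lambda(\rho u,\rho v)\le 2\kappa$. We choose a geodesic path $P_{uv}$ in $\Lambda$ of length at most $2\kappa$ and draw $uv$ as a curve running inside a thin tubular neighbourhood of the drawn path $P_{uv}$. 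The curve follows parallel copies of the edges of $\Lambda$ in thin strips, and is routed through small discs around the vertices of $\Lambda$ that it passes.

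The main task is bounding the number of crossings. Consider a fixed edge $e$ of $\Lambda$. The edges $uv$ of $\Gamma$ whose path uses $e$ have $\rho(u)$ within distance $2\kappa$ of $e$. Since $\Lambda$ has bounded degree, there are at most $D^{2\kappa+1}$ such vertices of $\Lambda$, hence at most $\kappa D^{2\kappa+1}$ such $u$. Each such $u$ lies on at most $d$ edges, so at most $N:=d\kappa D^{2\kappa+1}$ edges of $\Gamma$ use $e$. The same bound holds for the number of edges using the disc around a given vertex of $\Lambda$. Inside the strip of $e$ we draw these curves as disjoint parallel arcs. All crossings then occur inside the vertex discs, where any two curves cross at most once if each is drawn as a chord. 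A given curve of $\Gamma$ passes through at most $2\kappa+1$ vertex discs and meets at most $N$ other curves in each. It therefore crosses at most $(2\kappa+1)N$ edges, so the drawing is $k$-planar with $k=(2\kappa+1)N$.

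The delicate point is a general-position issue. Curves must not pass through drawn images of vertices of $\Gamma$ other than their endpoints. Endpoints themselves should sit inside the vertex discs, slightly perturbed, so that they become part of the chord picture. A curve with endpoints in the same disc (when $\rho u=\rho v$) is drawn as a short chord. All of this is routine once the combinatorial count above is in place; that count is the heart of the argument.
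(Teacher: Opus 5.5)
The paper gives no proof of this statement; it is quoted from Benjamini--Schramm. Your two constructions are the natural ones, and your ($\Leftarrow$) count is right. It even bounds the number of crossing \emph{points} on each edge by $(2\kappa+1)N$. The gap is in ($\Rightarrow$).

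With the definition used in this paper, $k$-planarity bounds the number of \emph{distinct} edges that an edge crosses, not the number of crossings on it. Two edges may therefore cross each other arbitrarily often. Your claim that each edge of $\Gamma$ is cut into at most $k+1$ edges of $\Lambda$ needs at most $k$ crossing points per edge. None of your general-position assumptions (transversality, no triple points, finitely many crossings per edge) supplies this.

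For example, suppose $e=uv$ and $f$ are drawn as interleaved zigzags crossing $m$ times, in the same order along both. Then $e$ becomes a path of length $m+1$ in $\Lambda$, and $e\cup f$ offers no shorter route from $u$ to $v$. With $m$ unbounded over the graph, no Lipschitz constant works for the inclusion.

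Passing to a drawing in which each pair of edges crosses boundedly often is not a perturbation either. The usual lens swap exchanges the arcs of $e$ and $f$ between two common crossings. Afterwards $e$ crosses everything that crossed the exchanged arc of $f$, so iterating the swap gives no control on the number of crossed edges. Nor can $e$ be redrawn with the other edges held fixed. If $f_1$ spirals $m$ times around $u$ and a radial segment $f_2$ crosses every turn, then each of $e,f_1,f_2$ crosses at most two other edges. Yet every curve from $u$ to a point outside the spiral meets $f_1\cup f_2$ on the order of $m$ times.

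So you need one of two things. Either prove a genuine redrawing lemma, going from ``at most $k$ crossed edges'' to a drawing with at most $K(k,d)$ crossings per edge. Or work with the crossing-count definition of $k$-planarity. Under that definition your ($\Rightarrow$) argument is complete, and your ($\Leftarrow$) construction lands in the same class.

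A smaller point concerns ($\Leftarrow$). A plane drawing of an infinite graph may have vertices accumulating at a vertex or at an interior point of an edge. In that case the pairwise disjoint vertex discs and edge strips you use do not exist. You can avoid this as follows.
\begin{enumerate}
\item Build the planarisation of your intended drawing of $\Gamma$ combinatorially from the rotation system of $\Lambda$. Which chords cross inside a disc is determined by cyclic orders.
\item Check that each finite subgraph of this planarisation is planar, by realising the construction inside a polygonal drawing of a finite piece of $\Lambda$.
\item Use the fact that a countable graph is planar if and only if all its finite subgraphs are.
\end{enumerate}
In the resulting embedding two edges might touch rather than cross at a degree-$4$ vertex, which is harmless. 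This is routine, but it is a different issue from the endpoint general-position point you flagged.
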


This has the following immediate consequence. 

\begin{corollary}
    Let $\Gamma$, $\Lambda$ be connected, bounded degree graphs.
    If $f : \Gamma \to \Lambda$ is a regular map and $\Lambda$ is almost planar, then $\Gamma$ is almost planar.
\end{corollary}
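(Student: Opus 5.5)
The plan is to reduce everything to Theorem~\ref{prop:almost-planar-regular-map-char} and the fact that composites of regular maps between graphs are regular. Since $\Lambda$ is almost planar and has bounded degree, the forward direction of Theorem~\ref{prop:almost-planar-regular-map-char} gives a planar, bounded-degree, connected graph $\Lambda'$ together with a regular map $\rho\colon \Lambda\to\Lambda'$. We will show that $\rho\circ f\colon \Gamma\to\Lambda'$ is regular. Then the backward direction of Theorem~\ref{prop:almost-planar-regular-map-char}, applied to the bounded-degree connected graph $\Gamma$, shows that $\Gamma$ is almost planar.

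The remaining work is to check that the composite is regular in the sense of Definition~\ref{def:reg-map-graphs}. Suppose $f$ is $\kappa_1$-regular and $\rho$ is $\kappa_2$-regular.

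For the Lipschitz condition, we compose the two coarse Lipschitz bounds:
\[
\dist(\rho f(x),\rho f(y)) \le \kappa_2\bigl(\kappa_1\dist(x,y)+\kappa_1\bigr)+\kappa_2 .
\]
So $\rho\circ f$ is $(\kappa,\kappa)$-coarsely Lipschitz with $\kappa=\kappa_1\kappa_2+\kappa_2$, or any larger constant.

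For the fibre condition, take a vertex $z$ of $\Lambda'$. Its preimage under $\rho$ has at most $\kappa_2$ points, and each of those has at most $\kappa_1$ preimages under $f$. Hence
\[
|(\rho f)^{-1}(z)|\le\kappa_1\kappa_2 .
\]
Taking $\kappa=\max(\kappa_1\kappa_2+\kappa_2,\ \kappa_1\kappa_2)$ shows that $\rho\circ f$ is $\kappa$-regular.

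We expect no real obstacle here. The one point to be careful about is that Theorem~\ref{prop:almost-planar-regular-map-char} uses the graph version of regularity, Definition~\ref{def:reg-map-graphs}, with vertex maps. That is exactly the version under which composition is closed by the computation above, and the hypotheses of bounded degree and connectedness needed to invoke the theorem in both directions are given in the statement.
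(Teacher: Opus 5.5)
Your proof is correct and is exactly the argument the paper has in mind when it calls this an immediate consequence of Theorem~\ref{prop:almost-planar-regular-map-char}. You compose $f$ with a regular map $\rho$ from $\Lambda$ to a planar bounded-degree graph and check that the composite is regular, and your constants are valid (the multiplicative constant $\kappa_1\kappa_2$ is dominated by your choice $\kappa_1\kappa_2+\kappa_2$).
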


In particular, almost planarity is a quasi-isometry invariant among connected graphs of bounded degree. This allows us to meaningfully describe a finitely generated group as an \emph{almost planar group}, meaning some/all of its Cayley graphs are almost planar graphs.
Then, another consequence of Theorem~\ref{prop:almost-planar-regular-map-char} is the following.

\begin{corollary}
    Let $G$ be a finitely generated group and $H \leq G$ a finitely generated subgroup. If $G$ is almost planar, then so is $H$.
\end{corollary}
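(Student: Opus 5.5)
The plan is to deduce this directly from Theorem~\ref{prop:almost-planar-regular-map-char} together with the standard fact that a subgroup inclusion induces a regular map between Cayley graphs. Fix finite generating sets $S_H$ for $H$ and $S_G$ for $G$, and enlarge $S_G$ to contain $S_H$. Changing the generating set of $G$ changes the Cayley graph only up to quasi-isometry, and almost planarity is a quasi-isometry invariant among bounded-degree graphs, so this enlargement is harmless. Write $\Gamma_H = \mathrm{Cay}(H,S_H)$ and $\Gamma_G = \mathrm{Cay}(G,S_G)$. Both are connected and of bounded degree.

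First, I will check that the inclusion $\iota\colon V(\Gamma_H)=H \to G = V(\Gamma_G)$ is a regular map in the sense of Definition~\ref{def:reg-map-graphs}. Since $S_H \subseteq S_G$, each edge of $\Gamma_H$ is an edge of $\Gamma_G$. Hence $\dist_{\Gamma_G}(\iota h,\iota h') \le \dist_{\Gamma_H}(h,h')$, so $\iota$ is $(1,0)$-coarsely Lipschitz. It is injective, so every fibre has at most one point. Therefore $\iota$ is $1$-regular.

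Next, since $G$ is almost planar, Theorem~\ref{prop:almost-planar-regular-map-char} gives a planar, bounded-degree, connected graph $\Lambda$ and a $\kappa$-regular map $\rho\colon \Gamma_G \to \Lambda$. I will show that the composition $\rho\circ\iota\colon \Gamma_H \to \Lambda$ is regular. It is coarsely Lipschitz with constants $(\kappa,\kappa)$, since composing with a $1$-Lipschitz map preserves $\rho$'s constants. Its fibres are $\iota^{-1}(\rho^{-1}(y))$, which have size at most $\kappa$. Applying the converse direction of Theorem~\ref{prop:almost-planar-regular-map-char} to the connected bounded-degree graph $\Gamma_H$ then shows that $\Gamma_H$ is almost planar, so $H$ is an almost planar group. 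Equivalently, I can apply the preceding corollary directly to $\iota$, with $\Lambda$ replaced by $\Gamma_G$.

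There is no serious obstacle. The only point needing care is the bookkeeping that makes ``almost planar group'' well defined. The generating sets must be chosen compatibly, or else we must pass through the quasi-isometry invariance, so that the inclusion is honestly coarsely Lipschitz. Finite generation of $H$ is exactly what is needed here: it makes $\Gamma_H$ locally finite and of bounded degree, which Theorem~\ref{prop:almost-planar-regular-map-char} requires.
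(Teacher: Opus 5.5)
Your proof is correct and follows the paper's route: the paper obtains this corollary from Theorem~\ref{prop:almost-planar-regular-map-char}, using its remark in Section~\ref{ssec:reg-maps} that a subgroup inclusion induces a regular map between Cayley graphs. Choosing $S_H \subseteq S_G$, checking that $\iota$ is $1$-regular, and composing with $\rho$ (or applying the preceding corollary to $\iota$) is exactly the intended argument, with the bookkeeping written out.
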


\subsection{Hyperbolicity and boundaries}
We say that a geodesic metric space $X$ is \emph{$\delta$--hyperbolic} if for any geodesic triangle with sides $\alpha, \beta, \gamma$ the side $\gamma$ is contained in the $\delta$--neighbourhood of $\alpha\cup \beta$. A space is \emph{hyperbolic} if it is $\delta$--hyperbolic for some $\delta$. A group is hyperbolic if one (hence any) of its Cayley graphs with respect to a finite generating set is hyperbolic. 

For a hyperbolic space $X$, the \emph{Gromov boundary} $\partial_\infty X$ is the set of geodesic rays up to bounded Hausdorff distance. If $X$ is a space of bounded geometry, then $\partial_\infty X$ is a compact metrizable space. Given points $x, y, o \in X$, the \emph{Gromov product} $(x \mid y)_o$ is the quantity $\frac{1}{2}\left( \dist_X (o, x) + \dist_X(o, y) - \dist_X(x,y)\right)$. Given $a,b \in \bdry X$ we define \[ (a | b)_o = \sup \liminf_{t \to \infty} (\gamma(t)|\eta(t))_o \] where the supremum is over geodesic rays $\gamma, \eta$ based at $o$ representing $a,b$ respectively.  It is sometimes convenient to denote a geodesic ray from $o$ representing $a \in \bdry X$ as $[o,a)$.  A \emph{visual metric} is a metric $\rho$ on $\partial_\infty X$  such that there are $\varepsilon, C>0$ so that for any $a, b \in \partial_\infty X$ it holds $\frac{1}{C}e^{-\varepsilon (a \vert b)_o} \leq \rho(a, b) \leq C e^{\varepsilon (a \vert b)_o}$. The topology induced by any visual metric coincides with the Gromov topology.

\section{Proof of Theorem~\ref{thm:intro-hyperbolic-plane}}

In this section we prove Theorem~\ref{thm:intro-hyperbolic-plane} of the introduction. That is, if a one-ended, locally finite, quasi-transitive graph $\Gamma$ is almost planar, then either $\Gamma$ is quasi-isometric to $\R^2$, or maps regularly into $\bbH^2$. We prove this in several steps.

\subsection{Gromov-hyperbolic planes}

We begin by observing that the hyperbolic plane is essentially the \emph{unique} visual, Gromov-hyperbolic, complete Riemannian plane, up to quasi-isometry.
First, we will need the following definition.

\begin{definition}[Visual space]
    Let $X$ be a geodesic metric space, $K \geq 0$. We say that $X$ is \emph{$K$-visual} if for every $x, y \in X$, there exists a geodesic ray $\gamma$ based at $x$ such that 
    $$
    \dist_X(y,\gamma) \leq K.
    $$
\end{definition}

We now prove the following.

\begin{theorem}\label{thm:planes-QI-to-hypplane}
    Let $P$ be a Gromov-hyperbolic, visual, complete Riemannian plane of bounded geometry. Then $P$ is quasi-isometric to the hyperbolic plane $\bbH^2$. 
\end{theorem}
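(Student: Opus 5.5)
The plan is to identify the Gromov boundary $\bdry P$ as a circle and then use the visual property to build an explicit quasi-isometry to $\bbH^2$. I will not try to quote a general hyperbolic-plane rigidity theorem. The route I would follow uses the standard fact that a visual, bounded-geometry Gromov-hyperbolic space is quasi-isometric to a hyperbolic cone over its boundary. Concretely, by Bonk--Schramm, such a space is quasi-isometric to a convex subset of some $\bbH^n$, and it is also roughly isometric to the hyperbolic filling of $(\bdry P,\rho)$ for a visual metric $\rho$. With this in hand, it suffices to show that $(\bdry P,\rho)$ is quasisymmetric to the round circle $S^1$. Indeed, quasisymmetric boundaries of visual Gromov-hyperbolic spaces of bounded geometry yield quasi-isometric spaces (Bonk--Schramm, or Paulin), and $\bdry \bbH^2=S^1$ with its round metric.

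\textbf{Step 1: the boundary is a topological circle.} Fix a basepoint $o$. Because $P$ is a complete Riemannian plane, geodesic rays from $o$ can be organised by their initial directions. Visuality says every point lies within $K$ of a ray from $o$. I would show that the map sending a unit tangent vector at $o$ to the endpoint of the corresponding ray (choosing rays, or rather using the cyclic order of rays) induces a continuous, cyclically monotone surjection $S^1\to\bdry P$. The cyclic order comes from planarity: two rays from $o$ that eventually diverge cut $P$ into two regions, and rays from $o$ through one region end in that region's boundary arc. Injectivity up to identification is also needed, meaning that arcs of directions with a common endpoint collapse to a single point. This uses hyperbolicity: distinct boundary points are separated, and an arc of rays converging to the same point bounds a region of bounded width. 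The result is that $\bdry P$ is a monotone image of $S^1$ with no collapse of the whole circle, hence is homeomorphic to $S^1$. I also need that $\bdry P$ is not a single point and not two points. If it were, visuality would force $P$ to lie within bounded distance of a ray or a line. That contradicts the fact that a plane of bounded geometry contains arbitrarily large embedded discs. More precisely, a quasi-line of bounded geometry cannot be a complete Riemannian plane, because large balls would have boundaries of uniformly bounded diameter separating them, and I would derive a contradiction from that.

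\textbf{Step 2: quasisymmetry to the round circle.} The boundary of a visual, bounded-geometry hyperbolic space, equipped with a visual metric, is doubling. It is also uniformly perfect, by visuality together with the fact that $\bdry P$ is connected. Any doubling metric circle that is linearly locally connected, in particular of bounded turning, is quasisymmetric to $S^1$ by the Tukia--Väisälä characterisation of quasicircles. Bounded turning, i.e.\ the requirement that one of the two arcs between $a$ and $b$ has diameter $\lesssim\rho(a,b)$, would be derived from the plane structure. Let $a,b$ be close, so that $(a\mid b)_o$ is large. The bi-infinite geodesic $[a,b]$ stays far from $o$, and it separates $P$, since it is a properly embedded line in the plane. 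The side of $[a,b]$ not containing $o$ has boundary equal to one arc from $a$ to $b$. Every point of that arc is reached by a ray from $o$ that must cross $[a,b]$, hence has Gromov product with $a$ at least roughly $(a\mid b)_o-O(\delta)$. This gives the diameter bound.

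The main obstacle is Step 1, rigorously extracting the circle structure and the separation properties from planarity and visuality, particularly handling non-unique geodesics. To make the ordering argument clean, I would replace geodesic rays by a coarse geodesic bicombing, or by working with geodesic lines $[a,b]$ directly. Once $\bdry P\cong S^1$ with bounded turning, Steps 2 and 3 (the filling and Bonk--Schramm) are standard.
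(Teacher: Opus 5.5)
Your overall architecture is the paper's. You reduce via Bonk--Schramm to showing $\bdry P$ is quasisymmetric to $\mathbb{S}^1$; quasisymmetry upgrades to power quasisymmetry because $\bdry P$ is connected, which the paper makes explicit. You then invoke Tukia--V\"ais\"al\"a, with doubling coming from bounded geometry. Your bounded-turning argument is exactly the paper's: the geodesic line $[a,b]$ separates $P$, and every boundary point on the far side is the endpoint of a ray from $o$ crossing $[a,b]$, which forces $(a\mid c)_o\geq (a\mid b)_o-O(1)$.

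Where you differ is in showing that $\bdry P$ is a circle. The paper never parametrises rays. It uses Moore's characterisation:
\begin{itemize}
\item compactness comes from Arzel\`a--Ascoli;
\item connectedness comes from a topological circle in $P\setminus B(o,R)$ enclosing $B(o,R)$, together with visuality, which gives chains of boundary points with consecutive Gromov products at least $R-C'$;
\item the cut-pair property comes from letting a geodesic line from $a$ to $b$ split $P$ into two sides (Jordan) and sorting boundary points by the side their rays eventually lie in.
\end{itemize}
That argument uses only the topology of the plane. So it applies verbatim to the simplicial plane of Proposition~\ref{prop:map-into-gromov-hyp-plane}, which is where the theorem is actually used in proving Theorem~\ref{thm:intro-hyperbolic-plane}. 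Your tangent-direction map is Riemannian-specific: in a simplicial plane, geodesics from $o$ branch and distinct rays can share initial segments. Your fallback of working with lines $[a,b]$ directly is essentially the paper's route.

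If you keep the direction map, three points need repair.

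\emph{Gaps in the set of ray directions.} Only a closed set $A$ of directions at $o$ gives rays, so the map is defined on $A$ and must be extended across each complementary gap. Continuity across gaps is where connectedness of $\bdry P$ really comes from, and it is also what rules out a two-point boundary. It requires showing that the two rays $\gamma_1,\gamma_2$ bounding a gap are asymptotic, and this is the one place visuality is essential. No ray from $o$ enters the open sector between $\gamma_1$ and $\gamma_2$, so every point of the sector is within $K$ of $\gamma_1\cup\gamma_2$. The part of a far-away circle enclosing $B(o,R)$ that crosses the sector from $\gamma_1$ to $\gamma_2$ therefore contains a point $K$-close to both rays. This gives $(\gamma_1(\infty)\mid\gamma_2(\infty))_o\geq R-C$ for every $R$, so the two rays share an endpoint.

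\emph{Monotonicity of fibres.} This needs a separation argument, not a ``bounded width'' claim. Suppose $\gamma_u,\gamma_w$ both end at $\xi$, and rays from $o$ on both sides of $\gamma_u\cup\gamma_w$ end elsewhere. Close $\gamma_u\cup\gamma_w$ with a short geodesic sufficiently far out. Both of those rays avoid the resulting closed curve away from $o$, yet near $o$ one of them starts in a bounded complementary region, a contradiction.

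\emph{The one-point boundary.} Your exclusion is wrong as stated. A capped half-cylinder is a complete Riemannian plane of bounded geometry lying within bounded distance of a ray, so planarity gives no contradiction. The correct reason is visuality at a second basepoint. If $\bdry P=\{\xi\}$ and $x=\gamma(T)$ is far along a ray $\gamma$ from $o$, every ray from $x$ stays $O(\delta)$-close to $\gamma([T,\infty))$. Hence $o$ is not within $K$ of any ray from $x$, contradicting visuality. The paper's use of Moore's theorem also tacitly needs $\bdry P$ to be non-degenerate, and the same argument supplies it.
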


\begin{proof}
	Let $\bdry P$ be the boundary at infinity of $P$ with some choice of visual metric $\rho$.
	
	We now assemble some results from the literature, omitting definitions which we do not work with directly.
	By Bonk--Schramm~\cite[Thms.~7.4,~8.2]{bonk2000embeddings}, $P$ is quasi-isometric to $\bbH^2$ if and only if $\bdry P$ and $\bdry \bbH^2 = \mathbb{S}^1$ are ``power quasisymmetric''.
	For connected metric spaces, being power quasisymmetric is equivalent to being quasisymmetric~\cite[Cor.~3.12]{TukiaVaisala-80-qs} (the ``$\lambda$-HD'' definition there is weaker than being connected).
	And again by Tukia--V\"ais\"al\"a~\cite[Thm.~4.9]{TukiaVaisala-80-qs}, a topological circle $Z$ is quasisymmetric to $\mathbb{S}^1$ if and only if it is ``HTB'' and ``BT''.  

	The ``HTB'' definition is equivalent to being ``doubling'', which holds for $\bdry P$ by the bounded geometry assumption~\cite[Thm. 9.2]{bonk2000embeddings}.

	The ``BT'' or ``bounded turning'' condition is also known as being ``linearly connected'': a metric space $Z$ is \emph{linearly connected} if there exists $L \geq 1$ so that for any $a,b \in Z$ there exists a connected set $J$ containing $a,b$ such that $\diam(J) \leq L \dist_Z(a,b)$.

	Summarising the discussion so far: to show $P$ is quasi-isometric to $\bbH^2$ it remains to show $\bdry P$ is a linearly connected topological circle.

	Moore showed that a topological space $Z$ is homeomorphic to $\mathbb{S}^1$ if and only if it is metrizable, connected, compact and has the property that for any distinct $x,y \in Z$, $Z \setminus \{x,y\}$ is disconnected (see e.g.~\cite[Thm.~2.28]{Hocking-Young-Topology}).

	Fix a basepoint $o\in P$ and hyperbolicity constant $\delta$ for $P$.
	Since $P$ is proper, $\bdry P$ is compact: any sequence of points $(a_i) \subset \bdry P$ has a convergent subsequence, since one can apply Arzel\`a--Ascoli to the sequence of geodesic rays $[o,a_i)$.

	We now see that $\bdry P$ is connected.  Assume the visual metric $\rho$ on $\bdry P$ satisfies 
	\[
		\frac{1}{C} e^{-\epsilon (a|b)_o} \leq \rho(a,b) \leq C e^{-\epsilon (a|b)_o}, \ \text{ for all } a,b \in \bdry P.
	\]
	For any large $R$ since $P$ is a Riemannian plane, there exists a topological circle $\gamma \subset P \setminus B(o,R)$ which disconnects $B(o,R)$ from infinity.
	For any $a,b \in \bdry P$, we can find geodesic rays $[o,a), [o,b)$ which hit $\gamma$ at some points $p,q$.  As $\gamma$ is connected, we can find a sequence of points $p=p_0, p_1,\ldots, p_n=q$ in $\gamma$ so that $\dist_P(p_{i-1},p_i) \leq 1$ for $i=1,\ldots,n$.  Since $P$ is visual, there exist geodesic rays $[o,a_i), i=0,\ldots,n$, going to points $a=a_0,a_1,\ldots,a_n=b$ in $\bdry P$, and so that $\dist_P(p_i,[o,a_i)) \leq K$.
	By hyperbolicity, $(a_i|a_{i+1})_o \geq \dist_P(o,p_i)-C' \geq R-C'$, where $C' = C'(\delta, K)$, and so $\rho(a_i,a_{i+1}) \leq C e^{-\epsilon (R-C')}$.
	Since $R$ was arbitrary, we conclude that $\bdry P$ is connected.

	We now show that removing any distinct $a,b \in \bdry P$ disconnects $\bdry P$.
	Since $P$ is proper and Gromov-hyperbolic, there exists a geodesic $\gamma:\R\to P$ so that $\gamma(-\infty)=a$ and $\gamma(+\infty)=b$.
	By the Jordan curve theorem, $\gamma$ disconnects $P$ into two components, $U$ and $U'$.
	For any $c \in \bdry P \setminus \{a,b\}$, any geodesic ray $[o,c)$, and any $C>0$, the set $B_P([o,c),C)\cap \gamma$ is bounded, else by hyperbolicity we would have $c=a$ or $c=b$.  In particular, which of $U$ or $U'$ the ray $[o,c)$ ends up in is independent of the choice of ray.

	Let $W,W' \subset \bdry P$ be the set of points $c \in \bdry P \setminus \{a,b\}$ for which the geodesic $[o,c)$ ends up in $U, U'$ respectively.
	By the above argument $\bdry P \setminus \{a,b\}$ is the disjoint union of $W$ and $W'$.
	Since $\gamma$ is a geodesic, $W$ and $W'$ are open because for each point of $W$ represented by $\xi$, a neighbourhood for $\xi$ contained in $W$ is obtained by considering all geodesics that diverge from $\xi$ after it last was $(2\delta+1)$-close to $\gamma$.
	Both $W$ and $W'$ are non-empty: choose $R$ much larger than $\delta$ and $\dist_P(o,\gamma)$.  Then the topological circle outside $B(o,R)$ contains points which are in both $U$ and $U'$, and a large (compared to hyperbolicity) distance from $\gamma$, so gives limit points in both $W$ and $W'$.
	So we have shown $\bdry P$ is a topological circle.  In fact, consequently we have seen that $W,W'$ are connected open intervals in $\bdry P$.

	It remains to show $\bdry P$ is linearly connected.
	Take $a,b \in \bdry P$, connected by a geodesic $\gamma \subset P$ at least say $10\delta$ from $o$ (else it suffices to bound the diameter of an arc connecting $a,b$ by $\diam(\bdry P)$).
	Suppose $U$ is the component of $P \setminus \gamma$ not containing $o$ and let $W$ be defined as before.
	Then $W \cup \{a,b\}$ is a connected set containing $a,b$, and any point $c \subset W$ is the limit of a geodesic ray which crosses $\gamma$, hence for suitable $C',C''$, $(a|c)_o \geq \dist_P(o,\gamma)-C' \geq (a|b)_o -C''$, thus for 
	\[
		\diam(W) \leq 2\sup_{c \in W} \rho(a,c) \leq 2e^{\epsilon C''}C^2 \rho(a,b). \qedhere
	\]
\end{proof}

\subsection{A criterion for visual-ness}

In order to apply Theorem~\ref{thm:planes-QI-to-hypplane}, we will need an easy way of establishing that a given Gromov-hyperbolic plane is visual. The following proposition provides such a criterion. 

\begin{lemma}\label{prop:criterion-for-visibility}
	Let $P$ be a $\delta$-hyperbolic, simplicial plane of bounded geometry. Suppose that there exists $C > 0$ such that every combinatorial Jordan curve of length at most $12\delta$ bounds a disk containing at most $C$ vertices. Then $P$ is visual. 
\end{lemma}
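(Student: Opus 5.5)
Fix $x,y\in P$; we want a geodesic ray from $x$ passing within bounded distance $K=K(\delta,C,\text{geometry})$ of $y$. The plan is to argue by contradiction via ``dead ends'': suppose $y$ is far from every geodesic ray based at $x$. We first note that every point is near some geodesic through it extending far. Since $P$ is a plane, it is one-ended and noncompact, so geodesic rays from $x$ exist (Arzel\`a--Ascoli on geodesics $[x,z_n]$ with $z_n\to\infty$). The key claim is then local: there is a constant $D$ such that for every $y$ with $\dist(x,y)=r$ there exists $y'$ with $\dist(y,y')\le D$ and $\dist(x,y')\ge r+1$. Granting this, we iterate. We get points $y=y_0,y_1,y_2,\dots$ with $\dist(x,y_{i})\ge \dist(x,y_{i-1})+1$ and $\dist(y_{i-1},y_i)\le D$. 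We then take geodesics $[x,y_n]$ and pass to a limit ray $\gamma$. By hyperbolicity, the concatenation of geodesic segments $[y_{i-1},y_i]$ makes definite progress away from $x$, and the Gromov products $(y_n\mid y)_x$ satisfy $\dist(x,y)-(y_n\mid y)_x\le K'(\delta,D)$. The usual argument here is that a chain moving away linearly in distance from $x$ is a quasi-geodesic-like path in the radial direction, so its geodesics to $x$ stay close near $y$. Hence $[x,y_n]$ passes within $K'+\delta$ of $y$ for all $n$, and so does the limit ray.

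Strictly, a chain of steps of length $D$ each gaining only $1$ is not a quasi-geodesic. So instead I would choose the step so that $\dist(x,y')\ge \dist(x,y)+D-c\delta$, i.e.\ $y'$ nearly continues a geodesic $[x,y]$ beyond $y$. With this choice the chain is a local geodesic up to $O(\delta)$ error at scale $D\gg\delta$, hence a global quasi-geodesic by the local-to-global principle in hyperbolic spaces. The ray then follows by Arzel\`a--Ascoli and the Morse lemma.

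The main obstacle is the local claim, namely showing there are no deep dead ends. This is where planarity and the small-curve hypothesis enter. Suppose $y$ is a point such that no point within distance $D$ of $y$ has distance from $x$ at least $\dist(x,y)+D-c\delta$. Consider the sphere $S=\{z:\dist(x,z)=\dist(x,y)\}$ near $y$. Since $P$ is a plane and the ball $B(x,r)$ is bounded, there is a combinatorial Jordan curve separating $B(x,r-\text{const})$ from infinity, lying in a thin annulus. The unbounded complementary region must be reached from near $y$ unless $y$ lies in a bounded ``pocket'' enclosed by the $r$-sublevel set together with a short gap. Using thin triangles (applied to geodesics from $x$ to two points on the boundary of the pocket where it is closest to the exterior), I expect the mouth of such a pocket to be spanned by a combinatorial Jordan curve of length at most $12\delta$: two geodesics from $x$ that are $4\delta$-close at height $r$, joined by short arcs, give a loop of length $\le 12\delta$ enclosing the pocket. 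By hypothesis that loop bounds a disc with at most $C$ vertices. By bounded geometry, that disc then has diameter at most some $D_0(C)$, so $y$ lies within $D_0+O(\delta)$ of the mouth, and from the mouth one exits to infinity, giving the far point $y'$. Setting $D$ larger than $D_0+O(\delta)$ then yields the claim.

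I expect the delicate part to be arranging the loop to be an honest combinatorial Jordan curve (removing self-intersections, which only shortens it and keeps a subdisc enclosing $y$) and checking that the enclosed side is the bounded one containing $y$. The Jordan curve theorem in the plane $P$ handles the latter.
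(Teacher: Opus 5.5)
Your reduction is sound, but it reduces the lemma to a ``no deep dead ends'' claim that is essentially equivalent to the lemma itself. That claim is then only asserted (``I expect the mouth of such a pocket \ldots''), not proved. The pocket is never defined, and the two geodesics go from $x$ to unspecified points $a,b$ on its boundary. Thinness only makes $[x,a]$ and $[x,b]$ $O(\delta)$-close up to distance $(a\mid b)_x$ from $x$. So unless $\dist(a,b)$ is small you get no loop of length at most $12\delta$ near height $r$, and nothing shows such a loop would separate $y$ from infinity. Nor does ``from the mouth one exits to infinity'' follow: geodesics ending at height about $r$ supply no point $y'$ near the mouth with $\dist(x,y')\geq r+1$.

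The missing idea is to use geodesics to points far beyond $y$, arranged so that the triangle containing $y$ has a side of length one. Take $R$ much larger than $\dist(x,y)$ and a combinatorial Jordan curve $S_R$ around $x$ all of whose vertices lie at distance exactly $R$ from $x$; then $y$ is inside it, since $[x,y]$ cannot meet it. Choose geodesics $\eta_z$ from the vertices $z$ of $S_R$ to $x$ forming a tree. Then $y$ lies in the region bounded by $\eta_z$, $\eta_w$ and a single edge $zw$. Since $\dist(z,w)=1$, the points $\eta_z(4\delta n)$ and $\eta_w(4\delta n)$ are uniformly close, and short geodesics joining them cut this region into quadrilaterals of perimeter at most $12\delta$. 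The quadrilateral containing $y$ bounds a disc with at most $C$ vertices, so $y$ is within $C+6\delta$ of $\eta_w$, a geodesic of length $R$ from $x$. Letting $R\to\infty$ and applying Arzel\`a--Ascoli gives visibility directly. Your local claim, the chain, local-to-global and the Morse lemma then become unnecessary; this is the paper's argument.

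Two remarks on the chaining itself. First, your initial version already works. The bounds $\dist(y_{i-1},y_i)\leq D$ and $\dist(x,y_i)\geq \dist(x,y_{i-1})+1$ give $j-i\leq \dist(x,y_j)-\dist(x,y_i)\leq \dist(y_i,y_j)\leq D(j-i)$, so the chain is a quasi-geodesic. Because $\dist(x,y_i)\geq\dist(x,y)$ for every $i$, the Morse lemma then bounds $(x\mid y_n)_y$ in terms of $\delta$ and $D$. Second, your strengthened step is false if the error $c\delta$ is independent of $C$. A tree-like bubble of depth $L\gg\delta$, attached through a neck of length at most $12\delta$, satisfies all the hypotheses. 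Yet at its tip $y$, every $y'$ with $\dist(x,y')>\dist(x,y)$ has $(x\mid y')_y\geq L-O(\delta)$. Any such error must be allowed to depend on $C$.
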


\begin{proof}
    We must show that there exists $K > 0$ such that for every $x, y \in P$, there exists a geodesic ray $\gamma$ based at $x$ such that $\dist_P(y,\gamma) < K$. 
    To do so, we will show that `balls are uniformly visual' and then apply the Arzel\`a--Ascoli theorem to conclude. 
    
	Let $r > C + 20 \delta$ and $x \in P$ be arbitrary. Let $S_r$ denote a simple closed curve in the 1-skeleton of $P$ such that 
    \begin{enumerate}
        \item the point $x$ lies in a bounded region of $P \setminus S_r$, and 

		\item for all vertices $s \in S_r$, we have that $\dist_P(s,x) = r$. 
    \end{enumerate}
	Such a simple closed curve $S_r$ certainly exists for any such $r\in\N$. 
    By the Jordan curve theorem, let $D_r$ denote the disk bound by $S_r$.  

    \begin{claim}\label{claim:spheres-are-visual}
        For all $y \in D_r$, there exists $z \in S_r$ and a geodesic $\gamma$ from $x$ to $z$ such that 
        $$
        \dist_P(y,\gamma) < C + 6\delta. 
        $$
    \end{claim}

    \begin{proof}
		For each vertex $z$ of $S_r$ choose once and for all a geodesic $\eta_z$ from $z$ to $x$ so that the choices form a tree, meaning that if $z,w\in S_r$ and $\eta_z(t) =\eta_w(t)$, then $\eta_z(s)  =\eta_w(s)$ for all $s\geq t$. In particular, either $y$ is already on one such geodesic, in which case we are done, or there exists a unique pair $z, w$ so that $y$ is in the interior of the triangle with sides $\eta_z, \eta_w$ and the edge $(z,w)$. Let $\Delta$ be the closure of such inside.  For $ n \leq \frac{r}{4\delta}$ define $z_n = \eta_z(n 4\delta)$, and define $w_n$ similarly (note that $\delta>0$). By hyperbolicity all triangles are uniformly $\delta$--thin. Thus we have $\dist_P(z_n, w_n) \leq  2\delta$ for all $n$. Additionally, for all $n$ choose once and for all a geodesic $[z_n, w_n]$ between $z_n, w_n$ that has connected intersection with both $\eta_z$ and $\eta_w$. The latter can be assumed by replacing the initial/final segment of $[z_n, w_n]$ with the appropriate segment of $\eta_z$/$\eta_w$. Moreover, for $n \neq m$ we must have $[z_n, w_n] \cap [z_m, w_m] = \emptyset$. If not, a triangular inequality argument would yield $\dist_P(z_n, z_m) < 4\delta$. A similar triangular inequality argument shows that $[z_n, w_n]$ cannot intersect the sphere of radius $ r- n4\delta + 2 \delta$ around $x$.    
        
    Intuitively, we want to use the geodesics $[z_n, w_n]$ to chop up  $\Delta$ in pieces of bounded perimeter, and conclude that $y$ must be contained in one such piece. For this, consider the quadrilaterals $Q_n$ with vertices $z_n, z_{n+1}, w_n, w_{n+1}$ and edges $[z_n, w_n], [z_{n+1},w_{n+1}]$ and the appropriate restrictions of $\eta_w, \eta_z$. If $Q_0$ contains $x$ then it must contain $\eta_z, \eta_w$ and the whole $\Delta$. So the claim is satisfied. So assume it is not the case. Consider $Q_1$. If $Q_1$ contains $y$, we are done. If $Q_1$ contains $x$, arguing as before we have that $Q_0 \cup Q_1$ contain the whole $\Delta$, and hence $Q_1$ contains $y$. If neither of the above happens, let $A_{n}$ denote complement of the ball $B_P(x, r-n4\delta)$. Since $[z_2, w_2]$ does not intersect the sphere of radius $r - 6\delta$ around $x$ we have 
    \[\Delta \cap  A_1 \subseteq Q_0 \cup Q_1.\]
    Proceeding inductively we have that for each $n$, either $y\in Q_n$ and we stop, or it must hold \[\Delta \cap A_n \subseteq \bigcup_{i =0}^n Q_i.\]
		If we don't stop, we eventually conclude $y \in B_P(x, 6\delta)$, and the hypothesis holds with $\gamma$ being any geodesic from $x$ to $S_r$. Otherwise, $y\in Q_n$ for some $n$. By construction, the perimeter of $Q_n$ is at most $12\delta$. Thus, the distance between $y$ and a point in $Q_n$ is at most $C$, otherwise $Q_n$ would contain a path containing more than $C$ vertices. Thus, the distance between $y$ and any point of $Q_n$ is at most $C+6\delta$. Since $Q_n$ contains points of the geodesic $\eta_w$, the claim follows for $\gamma = \eta_w$. See Figure~\ref{fig:visibility-proof} for a cartoon of this proof.
    \end{proof}

    \begin{figure}[h]
        \centering
        \input{figures/visibility-proof}
        \caption{Proving that spheres are uniformly visual in $P$.}
        \label{fig:visibility-proof}
    \end{figure}

    In light of Claim~\ref{claim:spheres-are-visual}, let $K = C + 6\delta$.
    We have that for all $x, y \in P$ and all $r > 0$, there exists a geodesic segment $\gamma$ based at $x$ of length $r$ which passes $K$-close to $y$. By the Arzel\`a--Ascoli theorem, there exists a geodesic ray based at $x$ which passes within $K$ of $y$. Since $x$ and $y$ were arbitrary, it follows that $P$ is visual. 
\end{proof}

\subsection{The Varopoulos inequality}

We will make use of an isoperimetric inequality of Varopoulos, as it appears in \cite[Thm.~2.1]{saloff1995isoperimetric}.
Given a vertex-transitive graph $\Gamma$, let $\gamma_\Gamma(n)$ denote the growth function of $\Gamma$; that is, the number of vertices in any $n$-ball. Given a subset $\Omega \subset V(\Gamma)$, we denote by $\partial \Omega$ the set of vertices which lie at a distance of exactly 1 from $\Omega$. 

\begin{theorem}[Varopoulos inequality]
	Any connected, locally finite, vertex-transitive graph $\Gamma$ satisfies the following:
    
	For all finite  $\Omega \subset V(\Gamma)$, we have that 
	$$
	\frac{|\Omega|}{\phi(2|\Omega|)} \leq 4|\partial\Omega|,
	$$
	where $\phi(\lambda) = \inf\{n \in \N : \gamma_\Gamma(n) > \lambda\}$. 
\end{theorem}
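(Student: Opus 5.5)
The natural route is the Coulhon--Saloff-Coste argument: compare $\Omega$ with its translates. Set $m=|\Omega|$ and $n=\phi(2m)$, so that $\gamma_\Gamma(n)>2m$; write $B=B(o,n)$ for a fixed base vertex $o$. Since $\Gamma$ is vertex-transitive, every ball of radius $n$ has exactly $\gamma_\Gamma(n)$ vertices. The idea is to move $\Omega$ ``by distance at most $n$'', count how much of it escapes, and charge that loss to the boundary.

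For vertex-transitive graphs that need not be Cayley graphs, I would work with the automorphism group $G=\mathrm{Aut}(\Gamma)$. It is locally compact, and the vertex stabilisers are compact open subgroups. Fix a left Haar measure $\mu$ normalised so that $\mu(G_o)=1$. The set $S=\{g\in G: \dist(o,go)\le n\}$ is a union of $\gamma_\Gamma(n)$ cosets of $G_o$, so $\mu(S)=\gamma_\Gamma(n)$. Two estimates are then needed.

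\begin{enumerate}
\item \textbf{Lower bound on the average displacement.} For each $x\in\Omega$, the map $g\mapsto gx$ sends $S$ onto a set of ``fibres'' over $B(x,n)$, roughly. Hence
\[
\frac{1}{\mu(S)}\int_S |g\Omega\setminus\Omega|\,d\mu(g)\;\ge\; m\Big(1-\frac{m}{\gamma_\Gamma(n)}\Big)\;\ge\;\frac m2.
\]
This inequality is exactly where $\gamma_\Gamma(n)>2m$ is used. Unimodularity issues could appear here; if so, I would instead use the counting for a fixed $x$ and sum over $x\in\Omega$.

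\item \textbf{Upper bound for a single element.} For each $g\in S$, I claim $|g\Omega\setminus\Omega|\le n\,C|\partial\Omega|$, and the aim is $C\le 4$. Write $g$ as a product of ``steps'' moving a vertex by distance one. This is awkward for general automorphisms, so instead I would argue vertex by vertex: a vertex $x\in\Omega$ with $gx\notin\Omega$ has a geodesic from $x$ to $gx$ that exits $\Omega$. Pair each such $x$ with the first boundary vertex $v\in\partial\Omega$ on that path, at distance $j\le n$ from $x$. To bound the number of such pairs, average over $g\in S$ rather than fixing $g$: integrating over $S$ and using transitivity bounds the total by $\sum_{v\in\partial\Omega}\sum_{j\le n}(\text{measure of }g\text{ sending something at distance }j)$, which gives $\lesssim n\,|\partial\Omega|\,\mu(S)$ up to a constant.
\end{enumerate}

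Combining the two estimates gives $m/2\lesssim n|\partial\Omega|$, that is, $|\Omega|/\phi(2|\Omega|)\le C'|\partial\Omega|$.

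The main obstacle is the bookkeeping in step 2 to reach the exact constant $4$, together with the measure-theoretic care needed for non-unimodular $\mathrm{Aut}(\Gamma)$. Since the paper cites the statement from Saloff-Coste, in practice I would defer to that reference for these constants and only verify the structure above.
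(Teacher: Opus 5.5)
The paper gives no proof of this statement; it quotes it from Saloff-Coste \cite[Thm.~2.1]{saloff1995isoperimetric}. Your sketch therefore has to stand on its own, and step 2 fails outright: this is not a bookkeeping issue. The set $S=\{g:\dist(o,go)\le n\}$ only controls how far $g$ moves $o$. Under the action of $\mathrm{Aut}(\Gamma)$, a vertex $x$ can be moved a distance of order $\dist(o,x)$, not $n$. So neither $|g\Omega\setminus\Omega|\le Cn|\partial\Omega|$ nor ``$j\le n$'' holds, and averaging over $S$ does not rescue it.

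Already $\Gamma=\Z^2$ gives a counterexample. The rotation $\rho$ by $\pi/2$ about $o$ lies in $G_o\subset S$. Take $\Omega=[0,r]^2\cap\Z^2$. For every translation $t$ of length at most $n$ one gets $|t\rho\,\Omega\setminus\Omega|\ge (r+1)(r-n)$. These elements $t\rho$ make up $1/8$ of $S$, whereas $n|\partial\Omega|=4n(r+1)$. Step 1 is also misjustified, since $\{gx:g\in S\}$ is not $B(x,n)$.

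The missing idea is that the Coulhon--Saloff-Coste argument needs measure-preserving maps that move \emph{every} point by at most $n$. In a Cayley graph these are the right translations $x\mapsto xg$, which are generally not graph automorphisms. For vertex-transitive $\Gamma$, lift to $G=\mathrm{Aut}(\Gamma)$:
\begin{itemize}
\item Take left Haar measure $\mu$ with $\mu(G_o)=1$ and set $\widetilde A=\{h\in G: ho\in A\}$, so $\mu(\widetilde A)=|A|$.
\item Translate on the right, using $\dist(ho,hgo)=\dist(o,go)$.
\item Choose, depending only on $go$, elements $g_0=1,g_1,\dots,g_k$ with $k\le n$ such that $g_0o,\dots,g_ko=go$ is a geodesic. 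The first-exit argument then gives $\{h\in\widetilde\Omega: hg\notin\widetilde\Omega\}\subseteq\bigcup_{i=1}^k\widetilde{\partial\Omega}\,g_i^{-1}$.
\item Since $hS=\{g':\dist(ho,g'o)\le n\}$, Fubini gives $\int_S\mu(\{h\in\widetilde\Omega: hg\notin\widetilde\Omega\})\,d\mu(g)\ge|\Omega|(\gamma_\Gamma(n)-|\Omega|)$, with no extra hypotheses.
\end{itemize}
If $\mathrm{Aut}(\Gamma)$ is unimodular, each $\widetilde{\partial\Omega}\,g_i^{-1}$ has measure $|\partial\Omega|$. You then get $|\Omega|/\phi(2|\Omega|)<2|\partial\Omega|$, so the constant $4$ is not the real difficulty.

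In general, however, $\mu(\widetilde{\partial\Omega}\,g_i^{-1})=\Delta(g_i)^{-1}|\partial\Omega|$, where $\Delta$ is the modular function with $\mu(Ag)=\Delta(g)\mu(A)$. For non-unimodular examples such as the grandparent graph, these factors can grow exponentially along the path. Your remedy of counting for fixed $x$ and summing does not touch this, because the distortion sits in the upper bound, not in step 1.

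The non-unimodular case needs a separate input. For example, by Soardi--Woess such graphs are non-amenable, which gives the inequality with some constant. That suffices for the qualitative use in Corollary~\ref{cor:varopoulos}, but not evidently with the constant $4$. As written, your proposal supplies neither this nor the right-translation mechanism.
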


The statement above has the easy consequence that if there exists $C, D \in \N$ such that $\gamma_\Gamma(n) \geq Cn^D$ for all $n \geq 1$, then there exists $C'> 0$ such that 
$$
|\Omega|^{\tfrac{D-1}{D}} \leq C' |\partial \Omega|
$$
for all finite  $\Omega \subset V(\Gamma)$.
The contrapositive of this says that if we can find `large' subsets with a `small' boundary, then this subsequently bounds the growth of $\Gamma$. 
To make this precise, given a non-empty finite subset $\Omega \subset V(\Gamma)$, define the \emph{depth} of $\Omega$, denoted $\mathrm{depth}(\Omega)$, via
$$
\mathrm{depth}(\Omega) := \max \{\dist_\Gamma(x, \partial \Omega) : x \in \Omega\}.
$$
We then have the following corollary.

\begin{corollary}\label{cor:varopoulos}
    Let $\Gamma$ be a connected, locally finite, vertex-transitive graph. Let $(\Omega_n)$ be a sequence of finite subsets of $V(\Gamma)$, satisfying $|\Omega_n| \to \infty$. \begin{enumerate}
        \item\label{itm:varop-cor-1} If $|\partial \Omega_n|$ is uniformly bounded, then $\Gamma$ has at most linear growth. 

        \item\label{itm:varop-cor-2} If $$
        \frac{\mathrm{depth}(\Omega_n)}{|\partial \Omega_n|} \not\to 0,
        $$ then $\Gamma$ has at most quadratic growth. 
    \end{enumerate}
\end{corollary}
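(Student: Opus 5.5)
Both parts follow by contradiction from the Varopoulos inequality, via its stated consequence: if $\gamma_\Gamma(n) \geq C n^D$ for all $n \geq 1$, then $|\Omega|^{(D-1)/D} \leq C'|\partial\Omega|$ for every finite $\Omega$. Two standard facts about vertex-transitive graphs are needed. First, by Gromov's polynomial growth theorem, or more elementarily by Trofimov's theorem for transitive graphs, the growth $\gamma_\Gamma$ is either bounded by a polynomial of some integer degree $d$, in which case $\gamma_\Gamma(n) \asymp n^d$, or it grows faster than every polynomial. Hence if $\Gamma$ does not have at most linear (respectively quadratic) growth, then $\gamma_\Gamma(n) \geq c n^2$ (respectively $\geq c n^3$) for all $n \geq 1$, for some $c > 0$. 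Second, an infinite connected transitive graph satisfies $\gamma_\Gamma(n) \geq n+1$; if $\Gamma$ is finite the conclusions are trivial, since $|\Omega_n| \to \infty$ is then impossible anyway.

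For part \eqref{itm:varop-cor-1}, suppose $\Gamma$ does not have at most linear growth, so $\gamma_\Gamma(n) \geq c n^2$ and we may take $D = 2$. The inequality becomes $|\Omega_n|^{1/2} \leq C'|\partial\Omega_n|$. The left side tends to infinity while the right side is bounded, which is a contradiction.

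For part \eqref{itm:varop-cor-2}, pass to a subsequence with $\mathrm{depth}(\Omega_n) \geq \varepsilon |\partial\Omega_n|$ for some $\varepsilon > 0$. Suppose growth is not at most quadratic, so $\gamma_\Gamma(n) \geq c n^3$, giving $|\Omega_n|^{2/3} \leq C'|\partial\Omega_n|$. To combine this with the depth, pick $x \in \Omega_n$ realising the depth $d_n = \mathrm{depth}(\Omega_n)$. The ball $B(x, d_n - 1)$ meets no vertex of $\partial\Omega_n$. Since $\Gamma$ is connected, any vertex of this ball outside $\Omega_n$ would force a path from $x$ to leave $\Omega_n$ through $\partial\Omega_n$ within distance $d_n - 1$, which is impossible. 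So $B(x, d_n - 1) \subset \Omega_n$, and therefore
\[
|\Omega_n| \geq \gamma_\Gamma(d_n - 1) \geq c (d_n - 1)^3 .
\]
Combining the three inequalities gives
\[
c' (d_n - 1)^2 \leq |\Omega_n|^{2/3} \leq C'|\partial\Omega_n| \leq C' d_n / \varepsilon .
\]
We need $d_n \to \infty$ to reach a contradiction. If $d_n$ stayed bounded along a subsequence, then so would $|\partial\Omega_n| \leq d_n/\varepsilon$, and part \eqref{itm:varop-cor-1} would already give linear, hence quadratic, growth. Otherwise $d_n \to \infty$, and the displayed inequality forces $d_n^2 \lesssim d_n$, a contradiction.

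The only slightly delicate point is the dichotomy for growth of transitive graphs. It is what allows us to pass from ``not at most quadratic growth'' to a uniform lower bound $\gamma_\Gamma(n) \geq c n^3$ for all $n$, rather than only along a subsequence. We would cite Trofimov, or Gromov's theorem together with Bass--Guivarc'h, for this step.
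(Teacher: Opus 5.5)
Your argument is correct and is essentially the paper's: you use the polynomial consequence of the Varopoulos inequality with $D=3$, place a ball of radius about $\mathrm{depth}(\Omega_n)$ inside $\Omega_n$, and derive a $\mathrm{depth}^2 \lesssim \mathrm{depth}$ contradiction, and your handling of the radius $d_n-1$ and of the bounded-depth case is more careful than the paper's one-line version. The only point to tighten is the growth dichotomy, which the paper also uses silently: to get $\gamma_\Gamma(n)\geq c n^3$ for \emph{all} $n$ in the non-polynomial case, you need the van den Dries--Wilkie-type strengthening (a polynomial bound along infinitely many radii already forces polynomial growth), which is known for transitive graphs but is more than Gromov's or Trofimov's theorem as usually stated.
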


Indeed, to see (\ref{itm:varop-cor-2}) we simply observe that if $\gamma_\Gamma(n) \geq C n^D$ and $\mathrm{depth}(\Omega_n) \geq c |\partial \Omega_n|$, then
\[
	C' |\partial \Omega_n| \geq |\Omega_n|^{\frac{D-1}{D}} \geq \left(C\mathrm{depth}(\Omega_n)^D \right)^{\frac{D-1}{D}}
	\geq (C^{1/D}c)^{D-1} |\partial \Omega_n|^{D-1},
\]
implying $D \leq 2$ by taking $n \to \infty$.

The reader with a careful eye might have noticed that these results apply to \emph{vertex-transitive} graphs rather than \emph{quasi-transitive} graphs. This is not a problem for our purposes, as every connected, locally finite, quasi-transitive graph is quasi-isometric to a connected, locally finite, vertex-transitive graph \cite[Prop.~5.1]{woess1994topological}, namely the \emph{Cayley--Abels graph} of its isometry group. With this, most applications of the Varopoulos inequality will pass through this quasi-isometry unhindered. 
We briefly rephrase (\ref{itm:varop-cor-1}) for easy reference later on. For this, we introduce a definition.

\begin{definition}
    Let $\Gamma$ be a connected graph. Then $\Gamma$ has property (\ref{eq:small-cuts}) if the following holds:
    \begin{equation}\tag{$\dagger$}\label{eq:small-cuts}
        \parbox{4.5in}{For all $n \geq 0$ there exists $m \geq 0$ such that if $S \subset V(\Gamma)$ satisfies $|S| \leq n$, then every finite connected component of $\Gamma \setminus S$ contains at most $m$ vertices.}
    \end{equation}
\end{definition}

\begin{proposition}\label{lem:finite-components-in-fg-groups}
    Let $\Gamma$ be an infinite, connected, locally finite, quasi-transitive graph which is not two-ended. Then $\Gamma$ satisfies property (\ref{eq:small-cuts}). 
\end{proposition}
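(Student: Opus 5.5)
We argue by contradiction, using Corollary~\ref{cor:varopoulos}(\ref{itm:varop-cor-1}). Suppose $\Gamma$ fails (\ref{eq:small-cuts}). Then there is some $n$ and sets $S_k \subset V(\Gamma)$ with $|S_k| \le n$, together with finite connected components $\Omega_k$ of $\Gamma \setminus S_k$ such that $|\Omega_k| \to \infty$. Every vertex of $\partial\Omega_k$ is adjacent to $\Omega_k$ but lies outside it, and $\Omega_k$ is a component of $\Gamma\setminus S_k$; hence $\partial \Omega_k \subseteq S_k$ and $|\partial\Omega_k| \le n$. So the boundaries stay uniformly bounded while the volumes go to infinity.

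Corollary~\ref{cor:varopoulos} is stated for vertex-transitive graphs, so we transfer to the Cayley--Abels graph $\Gamma'$ via a quasi-isometry $q\colon \Gamma \to \Gamma'$ from \cite[Prop.~5.1]{woess1994topological}. Both graphs have bounded degree (locally finite and quasi-transitive), so $q$ has uniformly bounded fibres and coarsely Lipschitz constants. Take $\Omega_k'$ to be the union of $q(\Omega_k)$ with the finite components of $\Gamma'$ minus a bounded neighbourhood $N$ of $q(S_k)$ that meet $q(\Omega_k)$, and delete $N$ from it. Then $|\partial\Omega'_k| \lesssim |N| \lesssim n$, and $|\Omega'_k| \gtrsim |\Omega_k| - C(n) \to \infty$. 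The set $\Omega'_k$ is finite: any path in $\Gamma'$ from $q(\Omega_k)$ to far away that avoids $N$ can be pulled back, by connecting quasi-inverse images of consecutive points with short paths, to a path in $\Gamma$ from $\Omega_k$ to far away avoiding $S_k$. This contradicts finiteness of $\Omega_k$. This transfer is the fiddliest step, though it is routine coarse geometry.

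Corollary~\ref{cor:varopoulos}(\ref{itm:varop-cor-1}) now says $\Gamma'$, and hence $\Gamma$, has at most linear growth. An infinite connected quasi-transitive graph of linear growth is quasi-isometric to $\Z$, by the classical fact that a compactly generated locally compact group of linear growth is virtually $\Z$ (applied to the isometry group acting on $\Gamma'$), so it is two-ended. This contradicts the hypothesis. Alternatively, we can avoid the growth classification: the $\Omega_k$ are large finite pieces cut off by at most $n$ vertices, and by quasi-transitivity we can translate $S_k$ to near a base vertex. A limiting argument then produces a bounded set separating $\Gamma$ into at least two infinite components, so $\Gamma$ has more than one end, and its ends can be analysed directly. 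I would use the growth route, since it is shorter.
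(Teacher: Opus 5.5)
Your argument is correct and is essentially the paper's own: $\partial\Omega\subseteq S$ for a component $\Omega$ of $\Gamma\setminus S$, so Corollary~\ref{cor:varopoulos}(\ref{itm:varop-cor-1}) forces at most linear growth, which for an infinite quasi-transitive graph means two-ended; you spell out the transfer to the Cayley--Abels graph more carefully than the paper does. One small correction: a compactly generated locally compact group of linear growth is compact-by-(virtually $\Z$) rather than literally virtually $\Z$, but the consequence you actually use, that $\Gamma$ is quasi-isometric to $\Z$, is correct.
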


 Indeed, if $\Omega$ is a component of $\Gamma \setminus S$ then $\partial \Omega \subset S$, so if $\Gamma$ is not two-ended, by Corollary~\ref{cor:varopoulos}(\ref{itm:varop-cor-1}) there is a bound on the size of finite such $\Omega$ depending only on $S$.


\subsection{Constructing a plane}

We show how to upgrade a regular map into a planar graph to a regular map into a Gromov-hyperbolic plane, under suitable hypotheses. 
First, we will need the following two easy lemmas. 

\begin{lemma}\label{lem:one-ended-regular-maps}
	Suppose $f : X \to Y$ is a regular map between proper geodesic metric spaces of bounded geometry. Assume that $X$ is one-ended, and $f$ is coarsely surjective. Then $Y$ is also one-ended. 
\end{lemma}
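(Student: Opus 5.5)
We must show that $Y$ is one-ended: it is unbounded, and for every $R$ there is $R'$ such that any two points of $Y\setminus B_Y(y_0,R')$ can be joined by a path avoiding $B_Y(y_0,R)$.

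\emph{Unboundedness.} Since $X$ is one-ended, it is unbounded. By bounded geometry, an unbounded proper space contains infinitely many points at pairwise distance at least $3$. Suppose $Y$ were bounded. Then $Y$ is covered by finitely many balls $B_Y(y_i,1)$, since $Y$ also has bounded geometry. By the regularity condition, each preimage $f^{-1}(B_Y(y_i,1))$ is covered by $\kappa$ balls of radius $1$ in $X$. So $X$ would be covered by finitely many unit balls, and hence would be bounded, a contradiction.

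\emph{Fix a basepoint and pull back the ball.} Fix $x_0\in X$ and set $y_0=f(x_0)$. The key step is that preimages of bounded sets are bounded. The ball $B_Y(y_0,R)$ is covered by $N(R)$ unit balls, again by bounded geometry. Hence $f^{-1}(B_Y(y_0,R))$ is covered by $\kappa N(R)$ unit balls in $X$. Each of these unit balls meets $f^{-1}(B_Y(y_0,R))$. A bound on their pairwise distances is not automatic, because $f$ is not assumed coarsely injective. So instead of bounding diameter directly, I use the weaker fact that the preimage is contained in a finite union of unit balls. A finite union of unit balls is bounded, so $f^{-1}(B_Y(y_0,R))\subseteq B_X(x_0,S)$ for some $S=S(R)<\infty$. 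Here $S$ exists but depends on $f$ and not only on the constants; that is fine for this lemma.

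\emph{Choose the connecting region in $X$.} One-endedness of $X$ gives $S'\ge S$ such that any two points of $X\setminus B_X(x_0,S')$ are joined by a path in $X\setminus B_X(x_0,S)$. Let $\varepsilon$ be the coarse-surjectivity constant of $f$. By the coarse Lipschitz bound, $f(B_X(x_0,S'))\subseteq B_Y(y_0,\kappa S'+\kappa)$.

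\emph{Connect two far points of $Y$.} Take $R'=\kappa S'+\kappa+\varepsilon+R+10\kappa$, and let $y,y'\in Y\setminus B_Y(y_0,R')$. By coarse surjectivity, choose $x,x'$ with $\dist_Y(f(x),y)\le\varepsilon$ and $\dist_Y(f(x'),y')\le\varepsilon$. Then $f(x)$ and $f(x')$ lie outside $B_Y(y_0,\kappa S'+\kappa)$, so $x,x'\notin B_X(x_0,S')$. Hence there is a path $p$ from $x$ to $x'$ avoiding $B_X(x_0,S)$, and therefore every point $p(t)$ satisfies $f(p(t))\notin B_Y(y_0,R)$.

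Now discretise $p$ at points $t_0<\dots<t_m$ with consecutive images at $X$-distance at most $1$. Then consecutive $f$-images are within $2\kappa$. Join them by geodesics in $Y$, and join $y$ to $f(x)$ and $y'$ to $f(x')$ by geodesics of length at most $\varepsilon$. Each such geodesic stays within $2\kappa$ (respectively $\varepsilon$) of a point outside $B_Y(y_0,R)$. So the concatenated path avoids $B_Y(y_0,R-2\kappa-\varepsilon)$.

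Since $R$ was arbitrary, running the argument with $R+2\kappa+\varepsilon$ in place of $R$ gives the required path avoiding $B_Y(y_0,R)$. Thus $Y$ has at most one end, and being unbounded it has exactly one.

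The main obstacle is the pullback step: it needs the unit-ball covering condition in Definition~\ref{def:reg-map-spaces} together with bounded geometry of $Y$, applied to $N(R)$ separate unit balls. Everything else is routine bookkeeping with the coarse Lipschitz constants.
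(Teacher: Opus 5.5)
Your proof is correct and is essentially the paper's argument, run directly rather than by contradiction. The paper pulls back a compact set separating two unbounded components of $Y$ to a bounded subset of $X$ (via the unit-ball covering condition and bounded geometry of $Y$), and asserts that coarse surjectivity makes this set separate two unbounded components of $X$; your lift-connect-push-forward step with the $2\kappa+\varepsilon$ thickening is exactly what makes that ``easy to see'' step precise, and you also handle the bounded (zero-ended) case that the paper leaves implicit.
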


\begin{proof}
    Suppose $Y$ were not one-ended, so there exists a compact set $S \subset Y$ such that $Y\setminus S$ contains two unbounded components. Let $S' = f^{-1}(S)$. Since $f$ is regular and $X$ has bounded geometry, $S'$ is covered by a finite number of compact balls, and in particular is compact itself. Since $f$ is coarsely surjective and $X$ has bounded geometry it is easy to see that $S'$ must separate two unbounded components. But then $X$ has more than one end, which is a contradiction. 
\end{proof}

\begin{lemma}\label{lem:small-cuts-regular-maps}
    Suppose $f : \Gamma \to \Lambda$ is a coarsely surjective, regular map between connected, bounded-degree graphs. Suppose further that $\Gamma$ satisfies (\ref{eq:small-cuts}).
    Then $\Lambda$ also satisfies (\ref{eq:small-cuts}).
\end{lemma}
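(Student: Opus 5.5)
Let $\kappa$ be the regularity constant of $f$, let $c$ be its coarse surjectivity constant (every vertex of $\Lambda$ is within $c$ of $f(V(\Gamma))$), and let $d$ bound the degrees of both graphs. The plan is to pull a small cut $S \subset V(\Lambda)$ back to a small cut of $\Gamma$, apply (\ref{eq:small-cuts}) in $\Gamma$, and push the resulting bound forward. So fix $n$, let $S\subset V(\Lambda)$ with $|S|\le n$, and let $C$ be a finite component of $\Lambda\setminus S$. We want to bound $|C|$ by a function of $n$ alone.

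\emph{Step 1: the pulled-back cut.} Set $R = \kappa + c + 1$ and $S' = f^{-1}(B_\Lambda(S,R))$. Since fibres have size at most $\kappa$ and balls in $\Lambda$ have size at most some $N(R,d)$, we get $|S'| \le \kappa\, n\, N(R,d)$, a bound depending only on $n$.

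\emph{Step 2: preimages of $C$ stay inside a component.} Let $C_0 = C\setminus B_\Lambda(S,R)$, and take adjacent vertices $u,v\in V(\Gamma)\setminus S'$. By coarse Lipschitzness, $\dist(f(u),f(v))\le 2\kappa$, so a geodesic between them has length at most $2\kappa$. If $f(u)\in C$ and $f(u)$ lies at distance more than $R$ from $S$, then choosing $R$ larger than $2\kappa$ (say $R = 2\kappa+c+1$) ensures this geodesic avoids $S$, so $f(v)\in C$ as well. Hence the set $f^{-1}(C)\setminus S'$ is a union of components of $\Gamma\setminus S'$. Each such component $K$ has finite image, contained in $C$, and so is finite because fibres are bounded. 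By (\ref{eq:small-cuts}) for $\Gamma$, each such $K$ therefore has at most $m(|S'|)$ vertices.

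\emph{Step 3: bounding the number of components.} This is the main obstacle: controlling how many components of $\Gamma\setminus S'$ map into $C$. Each such component $K$ has nonempty vertex boundary in $\Gamma$, since $\Gamma$ is connected and infinite while $K$ is finite (if $\Gamma$ is finite, the claim is trivial). That boundary lies in $S'$, and distinct components are disjoint. So the number of components is at most $d\,|S'|$, because every vertex of $S'$ is adjacent to at most $d$ of them. It follows that $|f^{-1}(C)| \le |S'| + d|S'|\,m(|S'|) =: M$, which depends only on $n$.

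\emph{Step 4: conclude.} Every vertex of $C$ is within $c$ of $f(V(\Gamma))$. A vertex of $C$ either lies within $R+c$ of $S$, and there are at most $n N(R+c,d)$ such vertices, or it is within $c$ of some $f(w)$. In the second case, a geodesic of length at most $c<R$ from it to $f(w)$ avoids $S$, so $f(w)\in C$ and hence $w\in f^{-1}(C)$. Therefore $|C| \le nN(R+c,d) + M\,N(c,d)$, a bound depending only on $n$. This establishes (\ref{eq:small-cuts}) for $\Lambda$.
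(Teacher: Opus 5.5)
Your proof is correct and follows the same strategy the paper indicates: the paper just defers to the argument of Lemma~\ref{lem:one-ended-regular-maps}, which pulls the cut back to $\Gamma$, applies the property there, and pushes the bound forward using coarse surjectivity and bounded geometry. Your version supplies the details the paper leaves implicit. In particular, you thicken $S$ to $B_\Lambda(S,R)$ with $R\ge 2\kappa$ before pulling back, which is genuinely needed since $f$ is only coarsely Lipschitz, and you bound the number of relevant finite components of $\Gamma\setminus S'$ by $d|S'|$ via their boundaries.
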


\begin{proof}
    Follows from a similar argument to the proof of Lemma~\ref{lem:one-ended-regular-maps}. 
\end{proof}

We are now able to prove the following key intermediate step.

\begin{proposition}\label{prop:map-into-gromov-hyp-plane}
    Let $\Gamma$ be connected, locally finite, one-ended, almost planar, vertex-transitive graph. Then either $\Gamma$ has quadratic growth or $\Gamma$ admits a regular map to the 1-skeleton of a connected simplicial complex of bounded geometry, which is Gromov-hyperbolic, visual, and homeomorphic to the plane.
\end{proposition}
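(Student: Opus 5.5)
By Theorem~\ref{prop:almost-planar-regular-map-char} we start from a regular map $\rho:\Gamma\to\Lambda$ with $\Lambda$ planar, connected and of bounded degree. The plan is first to tidy up $\Lambda$, then fill its faces with hyperbolic pieces to obtain a plane $P$, and finally use the Varopoulos inequality (Corollary~\ref{cor:varopoulos}) to show that failure of hyperbolicity or visualness of $P$ forces quadratic growth.

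\textbf{Step 1 (normalising $\Lambda$).} We may replace $\Lambda$ by the subgraph spanned by a bounded neighbourhood of $\rho(\Gamma)$, joined up along paths of bounded length coming from the coarse Lipschitz property. This keeps $\Lambda$ planar and of bounded degree, and makes $\rho$ coarsely surjective. Lemma~\ref{lem:one-ended-regular-maps} then makes $\Lambda$ one-ended. Since $\Gamma$ is one-ended and vertex-transitive, it is not two-ended, so it satisfies (\ref{eq:small-cuts}) by Proposition~\ref{lem:finite-components-in-fg-groups}. Lemma~\ref{lem:small-cuts-regular-maps} then transfers (\ref{eq:small-cuts}) to $\Lambda$. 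Using one-endedness and Richter--Thomassen \cite{richter20023}, we fix a VAP-free drawing of $\Lambda$. Each complementary face is then either a bounded open disc or an unbounded region homeomorphic to a half-plane, the latter bounded by a bi-infinite path. We also pass to a $2$-connected-enough subgraph so that face boundaries are cycles or lines; property (\ref{eq:small-cuts}) is what bounds what is lost here.

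\textbf{Step 2 (building $P$).} We glue into each bounded face a triangulated hyperbolic disc. A face with boundary cycle of length $\ell$ gets a combinatorial model of a hyperbolic polygon, for instance a truncated piece of a regular $\{3,7\}$-type triangulation of a disc of radius $\approx\log\ell$, whose boundary has length $\ell$ and in which boundary points are at distance $\asymp\log$ of their boundary distance. Each half-plane face gets a horoball-like triangulated half-plane, as in the combinatorial horoball construction. The result is a simplicial complex $P$ homeomorphic to the plane, of bounded geometry, and the inclusion $\Lambda^{(1)}\to P^{(1)}$ is $1$-Lipschitz. Regularity of the composite $\Gamma\to P^{(1)}$ is automatic, since balls in $P$ meet only boundedly many vertices of $\Lambda$ by bounded geometry.

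\textbf{Step 3 (hyperbolicity, the main obstacle).} Suppose $P$ is not hyperbolic. Then by the standard characterisation of non-hyperbolic planes (thin bigons fail, equivalently via Papasoglu's or Bowditch's criteria) there are arbitrarily large uniformly quasi-geodesic, indeed bi-Lipschitz, embedded loops $c_n$ of length $L_n\to\infty$ whose bounded discs $D_n$ have inradius $\gtrsim L_n$. Because the filled pieces are exponentially distorting, a bi-Lipschitz loop can spend only a bounded length inside each hyperbolic piece. It can therefore be pushed onto $\Lambda$ with bounded multiplicative loss, giving cycles $c_n'\subset\Lambda$ of length $\lesssim L_n$ that enclose a region of depth $\gtrsim L_n$ in $\Lambda$. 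Set $\Omega_n=\rho^{-1}(\text{vertices inside }c_n')$. Then $\partial\Omega_n$ lies in $\rho^{-1}(B(c_n',\kappa))$, which has size $\lesssim L_n$. Coarse surjectivity together with the Lipschitz bound gives $\mathrm{depth}(\Omega_n)\gtrsim L_n$. Corollary~\ref{cor:varopoulos}(\ref{itm:varop-cor-2}) then yields quadratic growth. The delicate points are the pushing argument and controlling preimages near the loop. I expect this to be the hardest step, and I would first try to handle it by a careful comparison between $\dist_P$ and $\dist_\Lambda$ along face boundaries.

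\textbf{Step 4 (visualness).} We apply Lemma~\ref{prop:criterion-for-visibility}. Let $J$ be a combinatorial Jordan curve of length at most $12\delta$. We need to bound the number of vertices of the disc it bounds. Vertices inside the hyperbolic pieces are controlled, since each such piece is uniformly visual and its short curves bound small discs. The vertices of $\Lambda$ inside $J$ form a finite set separated from the rest of $\Lambda$ by boundedly many vertices (those of $\Lambda$ near $J$). By (\ref{eq:small-cuts}) for $\Lambda$, this set has bounded size. Together this gives the constant $C$, and Lemma~\ref{prop:criterion-for-visibility} shows $P$ is visual.
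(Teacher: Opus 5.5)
Your Steps 1, 2 and 4 follow the paper's construction and visualness argument closely. Applying property ($\dagger$) to $\Lambda$ rather than to $\Gamma$ is harmless. So is using horoball-type half-planes instead of hyperbolic ones, provided every glued piece has uniformly bounded degree and logarithmic filling radii.

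\textbf{The pushing step in Step 3 is false, but unnecessary.} You claim a bi-Lipschitz loop spends only bounded length in each glued piece. That is not true. A geodesic segment crossing a disc piece of boundary length $N$ between roughly antipodal boundary points has length $\asymp\log N$, and it can perfectly well be an arc of a uniformly bi-Lipschitz loop. For example, take two large disc pieces sharing a long boundary arc from $a$ to $b$, with $a,b$ otherwise far apart; their two ``diameters'' from $a$ to $b$ form a fat bigon. Yet both face-boundary arcs joining $a$ and $b$ have length $\asymp N$. So a pushed cycle $c_n'\subset\Lambda$ with $\length(c_n')\lesssim L_n$ is not available in general. You do not need it. Since $\Lambda\subset P^{(1)}$, every path in $\Lambda$ from inside $D_n$ to outside meets $c_n$ in a vertex of $\Lambda$. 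Hence $c_n\cap V(\Lambda)$, which has at most $L_n$ vertices, already separates, and your bound on $|\partial\Omega_n|$ goes through with $c_n$ in place of $c_n'$.

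\textbf{The genuinely missing idea is the depth bound.} Linear inradius gives $z_n\in D_n$ with $\dist_P(z_n,c_n)\gtrsim L_n$. But $z_n$ may lie inside a glued piece. Coarse surjectivity of $\rho$ onto $\Lambda$ is then useless unless $\Lambda$ itself has vertices in $D_n$ at distance $\gtrsim L_n$ from $c_n$.

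Proving this is exactly where the logarithmic filling radii of the pieces are needed, and your Step 3 never uses them. The paper argues as follows. Suppose $V(\Lambda)\cap D_n$ lay in the $r_n$-neighbourhood of $c_n$, with $r_n/L_n\to 0$. Then $z_n$ lies in some piece $Q_n$, and $\partial(D_n\cap Q_n)$ contains a Jordan curve surrounding $z_n$ inside that neighbourhood. By bounded geometry (degree at most $d$), this curve has length at most $L_n d^{r_n}$. The filling bound $k\log(\length)$ in $Q_n$ then gives
\[
\dist_P(z_n,c_n)\le k\log L_n+(k\log d+1)r_n=o(L_n),
\]
a contradiction. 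Once a deep vertex of $\Lambda$ is found, your Varopoulos argument finishes the step.

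Separately, linear inradius of $D_n$ is not part of the standard non-hyperbolicity criteria. It has to be derived from the bi-Lipschitz property and planarity; the paper does this with a hex-theorem argument.
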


\begin{proof}
    Since $\Gamma$ is almost planar, there exists, $\kappa \geq 1$, a connected, bounded-degree, planar graph $\Pi$, and $\kappa$-regular map $f : \Gamma \to \Pi$. By passing to a subgraph of $\Pi$, we can assume that $f$ is coarsely surjective.
    By Lemmas~\ref{lem:one-ended-regular-maps},~\ref{lem:small-cuts-regular-maps}, we may assume without loss of generality that $\Pi$ is 2-connected\footnote{Recall that a graph is \emph{2-connected} if it is connected and contains no cut-vertices.} and one-ended. Since $\Pi$ is one-ended, we may assume it comes equipped with a fixed topological embedding $\vartheta : \Pi \into \R^2$ where the image of $V(\Pi)$ in the plane is closed and discrete. This follows from the fact that the end-compactification of $\Pi$ embeds into the 2-sphere {\cite[Lem.~12]{richter20023}}.

    We now embed $\Pi$ as a subgraph of the 1-skeleton of a simplicial complex of bounded geometry which homeomorphic to the plane, in the following way.
    For every $n > 0$, let $B_n$ denote a simplicial complex homeomorphic to the disk, such that the boundary $\partial D_n$ contains exactly $n$ edges. We choose these $B_n$ such that the following two conditions hold:
    \begin{enumerate}
        \item\label{itm:uniformly-bounded-degree} The $B_n$ have uniformly bounded degree.

		\item\label{itm:uniform-filling-radii} There exists $k > 0$ such that for all $n > 0$ and all simplicial simple closed curves $C \subset B_n$, if $D \subset B_n$ is the disk bound by $C$, we have that $D$ is contained in the $R$-neighbourhood of $C$, where $R = k \log(\length(C))$. 
    \end{enumerate}

    Item~(\ref{itm:uniform-filling-radii}) can be rephrased as saying that simple closed curves in the $B_n$ have \emph{uniform logarithmic filling radii}.
    This can be achieved by taking $B_n$ to be uniformly quasi-isometric to a disk in the hyperbolic plane of circumference equal to $n$. Indeed, it is a standard fact that (\ref{itm:uniform-filling-radii}) holds in the hyperbolic plane, which follows from the hyperbolic isoperimetric inequality. 
    We also fix $H$ to be a connected simplicial complex of bounded degree, which is homeomorphic to the half-plane, and quasi-isometric to the \emph{hyperbolic half-plane}. That is, a connected component of $\bbH^2 \setminus \gamma$, where $\gamma$ is some bi-infinite geodesic. We have that $H$ also satisfies items~(\ref{itm:uniformly-bounded-degree}) and (\ref{itm:uniform-filling-radii}) above. Assume inconsequentially that it does so with the same constants.
    
    Given a facial circuit $C$ of $\Pi$ of length $n$, we glue a copy of $B_n$ to $C$ along its boundary.
    If there is a bi-infinite facial line $L$ in $\Pi$, we glue a copy of $H$ to $L$ along its boundary. The result is a connected, bounded-degree, simplicial 2-complex $P$ such that $P$ is homeomorphic to the plane, and $\Pi$ includes into the 1-skeleton of $P$ as a subgraph. Let $\iota : \Pi \into P^{(1)}$ denote this inclusion. Let $d > 0$ denote the maximum degree of the any vertex in $P^{(1)}$. 

    Suppose now that $P$ is not Gromov-hyperbolic. We will show that $\Gamma$ has at most quadratic growth by applying the Varopoulos inequality.
    If $P$ is not Gromov-hyperbolic, then there exists a sequence 
        $
        C_1, C_2, \ldots
        $
        of 18-bi-Lipschitz embedded simplicial cycles in $P$ of increasing length \cite[Prop.~5.1]{hume2020poorly}, say 
        $
        \length(C_n) =: \ell_n \to \infty
        $ 
        as $n \to \infty$. 
        By the Jordan curve theorem, we have that $C_n$ bounds a (simplicial) disk $D_n \subset P$.

        \begin{claim}\label{claim:linear-filling-diameter}
            There exists $z_n \in D_n$ such that $\dist_P(z_n, C_n) \geq \ell_n /200$. 
        \end{claim}

        \begin{proof}
            Suppose not, so $D_n \subset B_P(C_n;\ell_n/200)$. Consider $D_n$ with its own intrinsic path metric. Parametrise $C_n = \partial D_n \cong S^1$ at unit speed, say $C_n : [0,\ell_n] \to D_n$. For $k=0,1,2,3$ let 
            $$
            p_{n}^k := C_n([k\ell_n/4, (k+1)\ell_n/4]),
            $$
            Thus subdividing $C_n$ into four segments of equal length.
            Let $R_n^k \subset D_n$ be defined as
            $$
            R_n^k = \{x \in D_n : \dist_{P}(x, p_n^k) \leq \ell_n/200 \}. 
            $$
            It is clear that the $R_n^k$ are closed and connected, and contain $p_n^k$. 
            By our hypothesis, we have that 
            $$
            D_n = \bigcup _{k=1}^4 R_n^k.
            $$
            By the hex theorem, we must have that either 
            $$
            R_n^1 \cap R_n^3 \neq \emptyset  \ \ \ \  \text{or} \ \ \ \ R_n^2 \cap R_n^4 \neq \emptyset.
            $$
            Indeed, we may model the disk $D_n$ with arbitrary precision as a hex board, colouring a cell `white' if it intersects  in $R_n^1 \cup R_n^3$, and `black' if it intersects  in $R_n^2 \cup R_n^4$, picking arbitrarily to break ties. The hex theorem then easily implies that either $\dist_P(R_n^1, R_n^3) < \varepsilon$ or $\dist_P(R_n^2,R_n^4) < \varepsilon$ for all $\varepsilon > 0$. Since the $R_n^k$ are closed, this implies the claim. 
            
            Without loss of generality, suppose there exists $z \in R_n^1 \cap R_n^3$. Then there exists $a \in p_n^1$, $b \in p_n^3$ such that $\dist_P(a,b) \leq \ell_n/100$. 
            However, we know that $\dist_{C_n}(a,b) \geq \ell_n/4$. 
            In particular, this means that 
            $$
            \dist_{C_n}(a,b) \geq \ell_n/4 >  18\ell_n/100 \geq 18 \dist_P(a,b).
            $$
            This contradicts the fact that the inclusion $C_n \into P$ is 18-bi-Lipschitz. 
        \end{proof}

        \begin{claim}
            There exists a sequence $(x_n)$ of points in $\Gamma$ such that 
            $$
            \iota \circ f(x_n) \in D_n, \ \ \ \text{and} \ \ \ \frac{\dist_P(\iota \circ f(x_n), C_n)}{\ell_n} \not\to 0. 
            $$
        \end{claim}

        \begin{proof}
            Suppose this were not the case. So for all $n > 0$ we have that 
            $$
            \iota \circ f(\Gamma) \cap D_n = \iota(\Pi) \cap D_n \subset B_P(C_n;r_n),
            $$
            where $(r_n)$ is some increasing sequence such that $r_n /\ell_n \to 0$. 
            Consider the sequence of points $z_n \in D_n$ given by Claim~\ref{claim:linear-filling-diameter}. By passing to a subsequence, we may assume without loss of generality that all $z_n$ lie within $P \setminus \iota(\Pi)$, and so lie within one of the disks or half-planes which we glued to $\Pi$ to form $P$. Let $Q_n \subset P$ denote this piece.

            Consider the region $D_n \cap Q_n$. Note that $\partial (D_n \cap Q_n)$ is contained in the $r_n$-neighbourhood of $C_n$. 
            Let $C'_n \subset \partial (D_n \cap Q_n)$ denote a simple closed curve which separates $z_n$ from infinity, which must exist since $D_n$ is finite. 
            By the bounded geometry of $P$, we have that
            $$
            \length(C_n') \leq \ell_n \cdot d^{r_n}.
            $$
            Let $D_n'$ denote the disk bounded by $C_n'$. Since $C_n'$ is contained in $Q_n$, we have that $D_n'$ is also contained in $Q_n$. We also have that $z_n \in D_n'$. But now, by property~(\ref{itm:uniform-filling-radii}) of the $Q_n$, we have that any curve inside of $Q_n'$ has filling radius which is logarithmic in its length. In other words, we have that 
            \begin{align*}
                \dist_P(z_n, C_n) &\leq \dist_P(z_n, C_n') + r_n \\
                &< k \log (\length(C_n')) + r_n\\
                &\leq k \log (\ell_n \cdot d^{r_n}) + r_n\\
                & \leq k \log (\ell_n) + k\log(d)r_n + r_n.
            \end{align*}
            Since $r_n/\ell_n \to 0$, this bound eventually becomes smaller than $\ell_n/200$ for $n$ sufficiently large. 
            But this is a contradiction to our choice of $z_n$, and so we are done.
        \end{proof}

        We are now ready to show that $\Gamma$ has quadratic growth. 

        \begin{claim}
            The graph $\Gamma$ has at most quadratic growth. 
        \end{claim}

        \begin{proof}
            Let $A_n = (\iota \circ f)^{-1}(C_n)$. We see that $A_n$ separates $x_n$ from infinity, and $\dist_\Gamma(x_n, A_n)$ is at least linear in $\ell_n$. Moreover, since $f$ is regular we have that $|A_n| \leq \kappa |C_n| = \kappa \ell_n$. This implies that $\Gamma$ has at most quadratic growth by Corollary~\ref{cor:varopoulos}. 
        \end{proof}

        Thus, it has been shown that either $P$ is Gromov-hyperbolic, or the graph $\Gamma$ has quadratic growth. In all that follows, let us assume that $\Gamma$ does not have quadratic growth, and so the plane $P$ is indeed Gromov-hyperbolic. Fix $\delta > 0$ such that $P$ is $\delta$-hyperbolic. 

        \begin{claim}
            The plane $P$ is visual. 
        \end{claim}

        \begin{proof}
            We will prove this by applying Lemma~\ref{prop:criterion-for-visibility}. 
            Suppose there exists a sequence $C_n$ of simple closed combinatorial loops in $P$, all of length at most $100\delta$, such that the number of vertices in the disk $D_n$ bound by $C_n$ tends to infinity. Since $P$ has bounded geometry, there exists a sequence $(p_n)$ such that $p_n \in D_n$ where $\dist_P(p_n, C_n) \to \infty$. 
            
            By Proposition~\ref{lem:finite-components-in-fg-groups}, it is clear that there exists $m > 0$ such that for all $n > 0$ and all $x \in \Gamma$, if $\iota \circ f(x) \in D_n$, then 
            $$
            \dist_P(\iota\circ f(x), C_n) < m.
            $$
			This means that each $p_n$ lies in one of the hyperbolic components of $P \setminus \iota(\Pi)$, say $p_n \in Q_n$ where $Q_n$ is either a hyperbolic disk or a hyperbolic half-plane. Consider $Q_n \cap D_n$. We have that $\partial (Q_n \cap D_n)$ contains a simple closed curve $C_n'$ which separates $p_n$ from infinity, which is contained in a uniform neighbourhood of $C_n$. So then the $C_n'$ are uniformly bounded in length, but the disks they bound (which are necessarily contained inside of $Q_n$) grow arbitrarily large in filling radius. This violates property~(\ref{itm:uniform-filling-radii}) above. 

            Therefore, any simple closed curve in $P$ of length at most $100\delta$ bounds a disk of uniformly bounded area. By Lemma~\ref{prop:criterion-for-visibility}, this implies that $P$ is visual. 
        \end{proof}

        This concludes the proof of the proposition.
\end{proof}

We are now ready to conclude our proof of Theorem~\ref{thm:intro-hyperbolic-plane}, which we restate for convenience.

\mapintohypplane*

\begin{proof}
	By replacing $\Gamma$ by a quasi-isometric graph, we may assume that $\Gamma$ is vertex-transitive \cite[Prop.~5.1]{woess1994topological}. By Proposition~\ref{prop:map-into-gromov-hyp-plane}, we have that either $\Gamma$ has growth which is at most quadratic, or $\Gamma$ admits a regular map into the 1-skeleton of a simplicial complex which is Gromov-hyperbolic, visual, and homeomorphic to the plane $P$. 
    If the former is true, then $\Gamma$ is necessarily quasi-isometric to the Euclidean plane by \cite{woess1994topological}. If the latter is true, then by Theorem~\ref{thm:planes-QI-to-hypplane}, $P$ is quasi-isometric to $\mathbb H^2$. Since quasi-isometries are examples of regular maps, we are done. 
\end{proof}

\section{Proof of Theorem~\ref{thm:main-result}}

We now march towards a proof of our main result, Theorem~\ref{thm:main-result}. Currently, we have access to regular map $f : \Gamma \to \mathbb H^2$ from our graph $\Gamma$ to into the hyperbolic plane (assuming $\Gamma$ is not quasi-isometric to the Euclidean plane). This map still has the potential to be quite poorly behaved. The goal of this section is to take such a map and replace it with something `nicer'; ultimately `nice' will mean a quasi-isometry. We will do this in a three steps, ultimately proving the following theorem.

\begin{theorem}\label{thm:upgrading-reg-to-qi}
    Let $\Gamma$ be a connected, locally finite,  coarsely simply connected, quasi-transitive graph of asymptotic dimension at least 2.
    If $\Gamma$ admits a regular map into $\mathbb{H}^2$ then $\Gamma$ is quasi-isometric to $\mathbb{H}^2$.
\end{theorem}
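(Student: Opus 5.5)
We follow the outline in Section~\ref{sec:outline}: replace $\Gamma$ by a $2$-dimensional complex, then improve the map to $\mathbb H^2$ in two stages, and finally show the improved map is a quasi-isometry.

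\textbf{Step 1: a continuous map on a simply connected complex.} Fix $m$ with $X = X_m(\Gamma)$ simply connected, and subdivide the $2$-cells so that $X$ is a cocompact simplicial $2$-complex. The complex $X$ is quasi-isometric to $\Gamma$, so the asymptotic dimension of $X$ is $2$. The given regular map is first defined on vertices. We extend it geodesically over edges, and over $2$-simplices by coning in $\mathbb H^2$ (for example, barycentric geodesic coning). This gives a continuous regular map $f\colon X\to\mathbb H^2$ which is uniformly Lipschitz on simplices. The fibres of $f$ over points then meet only boundedly many simplices in any unit ball.

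\textbf{Step 2: width-visibility.} We aim for a map with uniformly bounded $\diam f^{-1}(x)$ for $x$ in a net of $f(X)$. Define a complexity, for instance $\sup_x \diam f^{-1}(x)$ restricted to points coarsely seen from a base point. If this is unbounded, use quasi-transitivity to translate the bad fibres to a compact fundamental domain. Arzel\`a--Ascoli then gives a limiting continuous regular map $X\to\mathbb H^2$; the relevant family is precompact because the maps are equi-Lipschitz and can be post-composed with isometries of $\mathbb H^2$. For this limit, modify $f$ by collapsing the long fibre's excess, and argue that the complexity strictly drops. We expect to have to choose the complexity carefully so that a minimiser exists and is width-visible.

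\textbf{Step 3: stability via asymptotic dimension.} Say $f$ is stable at $x$ at scale $r$ if every continuous $f'$ agreeing with $f$ outside $B(x,r)$ still has $f(x)\in f'(X)$. We first show that stable points exist. If no point were stable at any scale, then using $\operatorname{asdim} X\ge 2$ and the fact that $X$ is simply connected, we could push the image of every ball off itself. This would yield a map of $X$ into a $1$-dimensional tree-like subset, that is, a regular map to a graph of asymptotic dimension~$1$. But regular maps cannot decrease asymptotic dimension, so this is a contradiction. We then recentre at stable points using quasi-transitivity and take an Arzel\`a--Ascoli limit. The result is a map that is very stable, meaning every point is stable at every scale, and which remains width-visible because that property is closed under such limits.

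\textbf{Step 4: very stable and width-visible implies quasi-isometry.} Very stability gives coarse surjectivity and, more strongly, that large balls $B(x,r)$ cover large balls around $f(x)$ in a degree-like sense. Width-visibility bounds fibres. Together these give a lower bound $\dist(f(x),f(y))\ge \eta(\dist(x,y))$ with $\eta\to\infty$, so $f$ is a coarse embedding. Finally, because $\mathbb H^2$ and $X$ are geodesic and $f$ is coarsely surjective, a standard argument upgrades this coarse embedding to a quasi-isometry.

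The main obstacle is Step~3. Turning ``no stable points'' into a genuine decrease of asymptotic dimension requires a careful local push-off construction along a cover of bounded multiplicity, and we expect that step to carry most of the technical work.
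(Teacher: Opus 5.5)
There is a genuine gap at Step~2, and the idea it is missing is the one Steps~3 and~4 depend on. Your complexity is left vague, and ``collapsing the long fibre's excess'' is not an operation you can carry out. Also, for continuous maps converging uniformly on compacta, fibre localisation is not closed under limits, contrary to what you assert in Step~3: near-misses of $f_n$ become genuine preimages for the limit.

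The paper instead passes to simplicial maps $X\to P$, with $P$ a triangulation of $\bbH^2$, and minimises the integer $\width(f)=\max_v|f^{-1}(v)|$, which trivially has a minimiser. Nothing is modified. One shows that a minimal-width map is automatically $R$-width-visible for some $R$. Otherwise, translate the balls that fail to contain $\width(f)$ preimages of a single vertex to a basepoint and take an Arzel\`a--Ascoli limit; this yields a simplicial regular map of strictly smaller width. The point of using the \emph{maximal} multiplicity is that $\width(f)$ preimages of $u$ inside $B_X(x,R)$ must be the \emph{entire} fibre of $u$. So every point of $X$ lies within $R$ of a complete fibre of diameter at most $2R$ (a special vertex). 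This turns the global statement ``this fibre is localised'' into a local one visible in a ball. Your target, bounded-diameter fibres over a net of $f(X)$, does not by itself give that every point of $X$ is near such a complete fibre, and that is what the later steps use.

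Your outline passes over three uses of this property.
\begin{enumerate}
\item \emph{The push-off in Step~3.} To remove a value $s$ from the image by a modification supported in one ball, all of $f^{-1}(s)$ must lie in that ball. The paper takes a $10\varepsilon$-separated set $S\subset P$ with $S\cap f(X)$ consisting of special vertices, and pastes modifications of displacement at most $2\varepsilon$. It then lifts, using $\pi_1(X)=1$, to the universal cover of $P$ minus small balls about $S$, which is quasi-isometric to a tree.
\item \emph{The limit in Step~3.} A local modification of $\widehat f$ at $x_0$ missing $y_0$ gives one of $f_n$ missing $y_0$ only if $f_n$ has no preimage of $y_0$ outside the ball, where $f_n$ and $\widehat f$ may differ. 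So the $n$-stable points must be chosen with special image; otherwise very stability does not pass to the limit.
\item \emph{Step~4.} The ``degree-like'' covering has to be made precise as follows. Suppose $x,x'$ have special images, $\dist_X(x,x')>C(T)+2R$ and $\dist_P(f(x),f(x'))<T$. Let $V$ be the component of $f^{-1}(B_P(f(x),T+2R))$ containing $f^{-1}(f(x))$; by regularity its diameter is at most $C(T)$, so $f^{-1}(f(x'))$ misses $V$. Take a diffeomorphism $F$ supported in $B_P(f(x),T)$ with $F(f(x'))=f(x)$, and replace $f$ on $V$ by $F\circ f$. This is a local modification missing $f(x)$, contradicting very stability. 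The argument needs both fibres to be complete and localised.
\end{enumerate}
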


Note that if a connected, locally finite,  coarsely simply connected, quasi-transitive graph $\Gamma$ is one-ended, then we automatically have that $\operatorname{asdim}(\Gamma) \geq 2$ \cite{fujiwara2007note}.
For the results in this section we will make heavy use of the fact that $\Gamma$ is coarsely simply connected, and essentially work entirely with a simply connected filling $X$ of $\Gamma$ (see Definition~\ref{def:csc}).
To facilitate this, the following easy lemma will be useful.

\begin{lemma}\label{lem:cts-extension}
    Let $X$ be connected polygonal 2-complex of bounded geometry, with 2-cell boundaries of uniformly bounded length.  Suppose there exists a regular map $f : X^{(1)} \to \mathbb H^2$. Then there exists a continuous regular map $\widehat f : X \to \mathbb H^2$. 
\end{lemma}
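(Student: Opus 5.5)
We will build $\widehat f$ skeleton by skeleton, using at each step that $\mathbb H^2$ is uniquely geodesic and contractible, with explicit controlled fillings: geodesic segments for edges and geodesic coning for 2-cells. Throughout, $f$ is $\kappa$-regular, and we read ``regular map $X^{(1)}\to\mathbb H^2$'' as regular on the vertex set, extended arbitrarily. Passing to vertices changes nothing, since edges have length one.

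\emph{Step 1 (edges).} Set $\widehat f(v)=f(v)$ on vertices. For an edge $e=[u,v]$, map $e$ at constant speed onto the geodesic segment $[f(u),f(v)]\subset\mathbb H^2$. This segment has length at most $2\kappa$, by the coarse Lipschitz bound $\dist(f(u),f(v))\le\kappa+\kappa$. The resulting map is continuous on $X^{(1)}$.

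\emph{Step 2 (2-cells).} Let $\sigma$ be a 2-cell whose boundary has length at most $N$. Its boundary circle then maps into a loop in $\mathbb H^2$ of length at most $2\kappa N$, and in particular of diameter at most $\kappa N$. Identify $\sigma$ with a cone on $\partial\sigma$, pick a vertex $v_\sigma$ of $\partial\sigma$, and send the segment from the cone point to $p\in\partial\sigma$ along the geodesic from $f(v_\sigma)$ to $\widehat f(p)$. This geodesic coning is continuous because geodesics in $\mathbb H^2$ depend continuously on their endpoints. It lands in $B(f(v_\sigma),\kappa N)$, so $\widehat f(\sigma)$ has diameter at most $2\kappa N$. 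The cones agree on shared edges, because the coning only extends the boundary values, so $\widehat f$ is continuous on $X$.

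\emph{Step 3 (regularity).} For the coarse Lipschitz bound, take $x,y\in X$ and nearest vertices $x',y'$. Both are at uniformly bounded distance, since cells have bounded size; here we take the path metric on $X$ in which each cell is a regular polygon. Then
\[
\dist(\widehat f x,\widehat f y)\le 2\cdot 2\kappa N+\kappa\dist(x',y')+\kappa,
\]
which is coarsely Lipschitz. For the preimage bound, fix $y\in\mathbb H^2$. If a point of a cell $\sigma$ maps into $B(y,1)$, then every vertex of $\sigma$ maps into $B(y,1+2\kappa N)$. In $\mathbb H^2$ this ball is covered by $M=M(\kappa,N)$ unit balls, each with $f$-preimage covered by $\kappa$ unit balls of $X^{(1)}$. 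By bounded geometry, the vertices in such a preimage are few: at most $\kappa c$, where $c$ bounds the number of vertices in a unit ball. So at most $M\kappa c$ vertices are involved. Each vertex lies in boundedly many cells, again by bounded geometry and bounded boundary lengths. Hence $\widehat f^{-1}(B(y,1))$ is contained in boundedly many cells, each of bounded diameter, and so it is covered by a bounded number of unit balls.

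\emph{Main obstacle.} The only real subtlety is Step 3's translation between the two definitions of regularity (Definitions~\ref{def:reg-map-graphs} and \ref{def:reg-map-spaces}): converting the ball-cover bound into a vertex count. This is handled by bounded geometry of $X$ as above. Everything else is a routine consequence of the uniform bound $N$ on cell boundaries.
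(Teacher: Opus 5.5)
Your proof is correct and follows the same route as the paper: keep $f$ on vertices, send edges to geodesic segments, fill each 2-cell with a uniformly bounded filling, and get regularity from the uniform bound on $\widehat f(\sigma)$ together with bounded geometry. The one difference is that you fill 2-cells by geodesic coning rather than by the paper's minimal-area null-homotopy, which is if anything cleaner: continuity and the bound $\diam \widehat f(\sigma) \le 2\kappa N$ follow immediately from convexity of balls in $\mathbb H^2$, and you also carry out the regularity check that the paper leaves as an exercise.
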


\begin{proof}
    Construct $\widehat f$ as follows. On vertices, $\widehat f$ agrees with $f$. That is, $\widehat f|_{X^{(0)}} = f|_{X^{(0)}}$. We extend $\widehat f$ to the 1-skeleton by sending edges to geodesics, and to the 2-skeleton by sending each 2-cell $\sigma$ to a null-homotopy of $\widehat f(\partial \sigma)$ of minimal area. It is an easy exercise to verify that $\widehat f$ is indeed continuous and regular.
\end{proof}

Throughout this section, we will fix $X$ to be a connected, cocompact polygonal 2-complex of bounded geometry and asymptotic dimension  at least 2, and  $f : X \to \mathbb H^2$ to be a continuous regular map. It will be convenient to replace $\mathbb H^2$ with some fixed vertex-transitive triangulation $P$ of $\mathbb H^2$, and also replace $X$ with some uniform barycentric subdivision, then assume without loss of generality that $f$ is simplicial. Up to modifying $f$ by a bounded amount, this assumption is harmless. This convention will follow us throughout the rest of this section.

\subsection{Width-visibility}

The first improvement we make relates to something we call \emph{width visibility}. Morally, this asks that if our map $f$ is at most $\kappa$-to-one everywhere, then anywhere we are in the space $X$, we should be able to `see' this full multiplicity in a uniform ball.

\begin{definition}
    The \emph{width} of a simplicial regular map $f : X \to P$ is defined to be 
    $$
    \width(f) = \max\Big\{\big|f^{-1}(v)\big|:v \in V(P)\Big\}.
    $$
\end{definition}

In other words, since $f$ is simplicial we have that the width of $f$ is the minimum $\kappa$ such that $f$ is $\kappa$-regular. 
The following property is of interest.

\begin{definition}[Width-visible]\label{defn:widthvisible}
    Let $R > 0$.
    We say that a simplicial regular map $f : X \to P$ is \emph{$R$-width-visible} if  for all $x \in V(X)$ there exists pairwise distinct vertices $y_1, \ldots, y_k \in B_X(x;R)$ such that
    $$
    f(y_1) = f(y_2) =\ldots = f(y_k),
    $$
    where $k = \width(f)$. 
\end{definition}

Note that we do \emph{not} ask that $f(y_i) = f(x)$.
If the constant $R$ is not important, we may just refer to such an $f$ as \emph{width-visible} and suppress $R$ from the notation.
We now prove the following. 

\begin{proposition}\label{prop:rank-visibility-whole-map}
    Suppose $f$ is such that $\width(f)$ is minimal amongst all simplicial regular maps $X \to P$. Then $f$ is width-visible.
\end{proposition}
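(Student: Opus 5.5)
We argue by contradiction, using cocompactness of $X$, vertex-transitivity of $P$ and Arzel\`a--Ascoli to produce a new simplicial regular map of strictly smaller width. Let $k=\width(f)$ and suppose $f$ is not $R$-width-visible for any $R$. Then for every $n$ there is a vertex $x_n\in V(X)$ such that no vertex $v\in V(P)$ has $k$ preimages inside $B_X(x_n;n)$. Fix a compact fundamental domain $K$ for the cocompact action on $X$, and pick $g_n$ with $g_nx_n\in K$. Choose $h_n\in\mathrm{Aut}(P)$ with $h_n f(x_n)$ equal to a fixed base vertex $p_0$. Consider the maps $f_n:=h_n\circ f\circ g_n^{-1}$. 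These are simplicial with the same width and the same regularity constants, since the actions are by simplicial automorphisms. By construction, on $B_X(g_nx_n;n)$ every fibre of $f_n$ has at most $k-1$ vertices.

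The family $(f_n)$ is equicontinuous: it is simplicial, so uniformly Lipschitz with respect to the simplicial metrics. It is also pointwise bounded, since $f_n$ of a point of $K$ lies in a bounded ball about $p_0$. Because $X$ is locally finite and $P$ has finitely many simplices in each ball, we can pass to a subsequence so that $g_nx_n$ is constant, say $x_*$, and on each ball $B_X(x_*;m)$ the simplicial maps $f_n$ eventually agree. A diagonal argument then gives a simplicial map $f_\infty:X\to P$ that agrees with $f_n$ on $B_X(x_*;m)$ for all $n$ large enough depending on $m$. This is the discrete version of Arzel\`a--Ascoli, and no analysis is needed.

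Next we check that $f_\infty$ is a simplicial regular map with $\width(f_\infty)\le k-1$. It is coarsely Lipschitz with the same constant, because it is simplicial. For the fibres, take any vertex $v$ and any finite collection of vertices in $f_\infty^{-1}(v)$. They all lie in some ball $B_X(x_*;m)$, and on that ball $f_\infty=f_n$ for some $n\ge m$. Since $B_X(x_*;m)\subset B_X(g_nx_n;n)$, the fibre of $f_n$ there has at most $k-1$ vertices. Hence $|f_\infty^{-1}(v)|\le k-1$, and since $f_\infty$ is simplicial the bound on vertex fibres gives regularity. This contradicts the minimality of $\width(f)$. Note that $k\ge 1$ for any map, and in fact $k-1\ge 1$ is forced only if a map exists; if $k=1$ the hypothesis fails trivially, because every singleton fibre witnesses visibility with $R=0$.

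The main obstacle is making sure the limit really lives in the same class of maps over which width is minimized, namely simplicial regular maps $X\to P$, possibly continuous. Simpliciality survives the eventual agreement on balls, so this works. One subtlety is that the width of a simplicial map should count only vertex preimages; if it also counts interior points of simplices mapped onto a vertex, collapsed simplices complicate things. I would handle this by the same local-agreement argument, since any such preimage also lies in a finite ball. A second point to verify is that minimality is taken over all simplicial regular maps, not just those at bounded distance from the original $f$. The statement of the proposition indicates exactly this.
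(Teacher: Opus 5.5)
Your proposal is correct and follows essentially the same route as the paper. Both argue by contradiction: assume no uniform radius works, use cocompactness of the actions on $X$ and $P$ to recentre the bad balls at a fixed vertex, and extract by a discrete Arzel\`a--Ascoli/diagonal argument a limiting simplicial regular map of width at most $\width(f)-1$, contradicting minimality. You fill in details the paper's one-line proof leaves implicit, namely making $g_nx_n$ constant, eventual agreement on balls, and the finite-subset argument bounding the limit's fibres.
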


\begin{proof}
    Let $f_{x,R} = f|_{B_X(x;R)}$. Then, slightly abusing notation, there must exist some $R > 0$ such that for all $x \in X$, we have that
    $$
    \width(f_{x,R}) = \width(f).
    $$
    If not, we may use the fact that $X$ and $P$ have cocompact automorphism groups and apply the Arzel\`a--Ascoli theorem to find another simplicial regular map $f' : X \to P$ satisfying $\width(f') < \width(f)$, which contradicts the minimality of $f$. The proposition follows.
\end{proof}

The following terminology will be helpful.

\begin{definition}[Special vertices]\label{def:special}
    Suppose $f : X \to P$ is $R$-width-visible.
	A vertex $u \in V(P)$ is said to be \emph{special} if $u \in f(X)$ and 
    $$
    |f^{-1}(u)|=\width(f) \ \ \ \text{and} \ \ \ \diam(|f^{-1}(u)|) \leq 2R.
    $$ 
    The set of special vertices is denoted $\mathscr S$. 
\end{definition}

\subsection{Stability}

We now move onto the second property on our manifesto. This will pertain to our map being `stable' in a particular sense. This will be a bit more technical. The ideas in this section take inspiration from the methods of Bonk and Kleiner used in \cite{bonk2002rigidity}.

In order to give the precise definition for what we mean when we say that $f$ is `stable', we will need to introduce the notion of a `local modification'.

\begin{definition}[Local modification]
    Let $\varepsilon > 0$, $x_0 \in X$. Then an \emph{$\varepsilon$-local modification of $f$ at $x_0$} is a continuous (not necessarily simplicial) map $f' : X \to P$ such that  $f|_U = f'|_U$, where $U = X \setminus B_X(x_0,\varepsilon)$.

\end{definition}

Intuitively, the first condition just says that $f'$ agrees with $f$ everywhere except for on a controlled ball. 
We now define stability as follows.

\begin{definition}[Stability]\label{defn:stability}
    Let $\varepsilon > 0$, $x_0 \in X$. We say that $f$ is \emph{$\varepsilon$-stable at $x_0$} if for all $\varepsilon$-local modifications $f'$ of $f$ at $x_0$, we have that $f(x_0) \in f'(X)$.
    If $f$ is $\varepsilon$-stable at $x_0$ for all $\varepsilon > 0$, then $f$ is said to be \emph{very stable} at $x_0$. 
\end{definition}

The following lemma is straightforward, and follows from the fact that $f$ is a proper map and $X$ and $P$ are locally compact.

\begin{lemma}\label{lem:local-mod-regular}
    Let $f'$ be a local modification of $f$. Then $f'$ satisfies
    $$
    \sup_{x \in X} \dist_P(f(x), f'(x)) < \infty.
    $$ 
    In particular, $f'$ is a regular map.
\end{lemma}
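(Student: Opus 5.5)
The plan is to bound the displacement by a compactness argument on the support of the modification. Let $f'$ be an $\varepsilon$-local modification of $f$ at $x_0$, so $f' = f$ off the closed ball $\overline{B} := B_X(x_0,\varepsilon)$. Since $X$ is a locally finite polygonal complex (in particular proper and locally compact), $\overline{B}$ is compact. The continuous images $f(\overline{B})$ and $f'(\overline{B})$ are then compact subsets of $P$, hence bounded. Fix $p = f(x_0)$. For $x \in \overline{B}$ we get
$$
\dist_P(f(x),f'(x)) \leq \diam\big(f(\overline{B}) \cup f'(\overline{B})\big) < \infty,
$$
since both images lie in a common bounded set (for instance, a large ball about $p$). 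For $x \notin \overline{B}$ the displacement is zero. This gives the displayed supremum bound, say $D := \sup_x \dist_P(f(x),f'(x)) < \infty$.

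For the regularity of $f'$, I would use the general principle that bounded perturbations preserve regularity. Write $f$ as $(\kappa,\kappa)$-coarsely Lipschitz. Then
$$
\dist_P(f'(x),f'(y)) \leq \kappa\dist_X(x,y) + \kappa + 2D,
$$
so $f'$ is $(\kappa,\kappa+2D)$-coarsely Lipschitz. For the covering condition, fix $y \in P$. If $f'(x) \in B_P(y,1)$, then $f(x) \in B_P(y,1+D)$. Hence
$$
f'^{-1}(B_P(y,1)) \subseteq f^{-1}(B_P(y,1+D)).
$$
Since $P$ has bounded geometry, $B_P(y,1+D)$ is covered by $N = N(D)$ balls of radius $1$, and each of their preimages under $f$ is covered by $\kappa$ balls of radius $1$. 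So $f'^{-1}(B_P(y,1))$ is covered by $N\kappa$ unit balls. After enlarging constants to a single one, $f'$ is $\max(\kappa+2D, N\kappa)$-regular.

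I expect essentially no obstacle here. The only point needing care is that compactness of $\overline{B}$ requires properness of $X$, which follows from local finiteness and bounded geometry of the complex. Note also that continuity of $f'$ is used only to ensure $f'(\overline{B})$ is bounded; properness of $f$ is not really needed for this direction.
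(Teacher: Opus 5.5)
Your proof is correct and follows the same compactness idea the paper indicates; the paper gives only a one-line justification citing properness of $f$ and local compactness. The displacement is supported on the compact closed ball $B_X(x_0,\varepsilon)$, where continuity bounds it, and regularity then follows from the bounded-perturbation estimate you write out, which the paper leaves implicit. Your remark that properness of $f$ is not needed for the displacement bound is also right: continuity of $f$ and $f'$ together with properness of $X$ suffice.
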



While we know by Lemma~\ref{lem:local-mod-regular} that local modifications are still regular maps, we do not quite yet have the uniformity required to be able to compose local modifications on a sufficiently sparse set and still preserve regularity. For this, we will use the following quantitative elaboration.

\begin{lemma}\label{lem:bounded-local-mod}
    Let $f'$ be a $\varepsilon$-local modification of $f$ at $x_0$, such that $f(x_0) \not\in f'(X)$. Then there exists another $\varepsilon$-local modification $f''$ of $f$ at $x_0$ such that  $f(x_0) \not\in f''(X)$, and also 
    $$
    \sup_{x \in X} \dist_P(f(x), f''(x)) \leq 2\varepsilon.
    $$ 
\end{lemma}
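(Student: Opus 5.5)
The idea is to post-compose $f'$, but only on the ball $B := B_X(x_0,\varepsilon)$, with a retraction of the punctured plane onto a punctured disc around $p := f(x_0)$. The new map then stays close to $f$ and still misses $p$.

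\emph{Step 1: $f(B)$ is close to $p$.} Since $f : X \to P$ is simplicial and both complexes carry path metrics with unit-length edges, $f$ is $1$-Lipschitz. Hence
$$
f(B) \subset D := B_P(p,\varepsilon).
$$

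\emph{Step 2: a retraction onto the punctured disc.} Build a continuous map $\rho : P \setminus \{p\} \to D \setminus \{p\}$ that restricts to the identity on $D \setminus \{p\}$. The cleanest way is to regard $P$ as (a triangulation of) $\mathbb H^2$ with its hyperbolic metric. Then $D$ is a closed convex disc, and we take $\rho$ to be the identity on $D$ and the nearest-point projection onto $D$ outside it. This projection lands on $\partial D$, so it never hits $p$ once $\varepsilon>0$, and it is continuous. Since $\rho$ is the identity on $D$, it fixes every point of $D$ other than $p$.

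\emph{Step 3: define $f''$.} Set $f'' = \rho \circ f'$ on $B$ and $f'' = f$ on $U = X \setminus B$. These two definitions agree on $B\cap\overline U$: at such points $f' = f$ (by continuity, because $f'=f$ on $U$), and $f$ takes values in $D$ by Step 1. So the points are fixed by $\rho$, provided they are not sent to $p$, which holds because $f'$ omits $p$. Therefore $f''$ is continuous, and it is an $\varepsilon$-local modification of $f$ at $x_0$.

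\emph{Step 4: verify the three properties.}
\begin{itemize}
\item $f''$ omits $p$. On $U$ we have $f'' = f = f'$, which misses $p$ by hypothesis, and on $B$ the map $\rho$ takes values in $D\setminus\{p\}$.
\item The displacement bound holds on $U$ trivially, since $f''=f$ there.
\item For $x \in B$, both $f(x)$ and $f''(x)$ lie in $D$, so
$$
\dist_P(f(x),f''(x)) \leq 2\varepsilon.
$$
\end{itemize}

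\emph{Main obstacle.} The only delicate point is Step 2 in the combinatorial metric of $P$. Metric balls in the simplicial path metric need not be convex, and may fail to be topological discs admitting a retraction of the punctured plane onto their punctured copy. I would handle this by working with the hyperbolic metric on $P \cong \mathbb H^2$. Because the triangulation is vertex-transitive, the two metrics are comparable. If the constants do not come out exactly as $2\varepsilon$, the fallback is to choose a disc $D \supseteq f(B)$ of hyperbolic radius comparable to $\varepsilon$ and absorb the constant. Any fixed multiple of $\varepsilon$ suffices for the later sparse-composition argument anyway.
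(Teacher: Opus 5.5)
Your proposal is correct and is essentially the paper's proof: the paper also post-composes $f'$ on $B_X(x_0,\varepsilon)$ with the geodesic retraction onto the disc $D$ of radius $\varepsilon$ about $f(x_0)$, keeps $f''=f$ off the ball, and gets the bound $2\varepsilon$ from $f(B), f''(B)\subset D$. The paper defines its retraction only on a large disc containing $f'(B)$, sending the surrounding annulus onto $\partial D$, whereas your global nearest-point projection does the same job. Your concern about which metric $P$ carries is an ambiguity the paper shares, since it too uses round hyperbolic circles while taking the simplicial map $f$ to be $1$-Lipschitz; under that convention you get exactly $2\varepsilon$.
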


\begin{proof}
    Let $C$ be a circle of radius $\varepsilon$ about $f(x_0)$. Let $D$ denote the disk bounded by $C$. Let $S$ denote the sphere of radius $\varepsilon$ about $x_0$ in $X$, and $B$ denote the ball of the same radius about $x_0$ bounded by $S$.
    Note that since $f$ is 1-Lipschitz, we have that $f(B) \subset D$, and also $f'(S) \subset D$. However, $f'(B)$ could go wildly beyond $C$ by some large (but finite) amount. Consider a large circular annulus $A$ in $P$ with inner circle $C$ and outer circle $C'$, where $f'(B) \setminus D \subset A$. Let $D'$ denote the disk bounded by $C'$. Let $r : D' \to D$ be the geodesic retraction which maps $A$ onto $C$, and acts as the identity on $D$. We now define $f''$ piecewise via
    $$
        f''(x) =
        \begin{cases}
        r \circ f'(x) &\text{if $x \in B$,}\\
        f'(x) = f(x)  &\text{if $x \not\in B$.}\\
        \end{cases}
    $$
    It is not hard to see that $f''$ is a local modification satisfying our requirements. See Figure~\ref{fig:retracting-annulus} for a cartoon of this construction.
\end{proof}

\begin{figure}
    \centering
    \tikzset{every picture/.style={line width=0.75pt}} 

\begin{tikzpicture}[x=0.75pt,y=0.75pt,yscale=-1,xscale=1]

\draw   (376.75,160.17) .. controls (376.75,132.87) and (398.87,110.75) .. (426.17,110.75) .. controls (453.46,110.75) and (475.58,132.87) .. (475.58,160.17) .. controls (475.58,187.46) and (453.46,209.58) .. (426.17,209.58) .. controls (398.87,209.58) and (376.75,187.46) .. (376.75,160.17) -- cycle ;
\draw  [color={rgb, 255:red, 208; green, 2; blue, 27 }  ,draw opacity=1 ][fill={rgb, 255:red, 247; green, 217; blue, 217 }  ,fill opacity=1 ][dash pattern={on 3pt off 0.75pt}] (409.33,113.75) .. controls (416.58,111.5) and (419.08,111.25) .. (422.58,111.25) .. controls (426.08,111.25) and (424.31,118.01) .. (428.08,119.75) .. controls (431.86,121.49) and (461.58,126.66) .. (461.58,126.25) .. controls (461.58,125.84) and (461.33,125.5) .. (461.58,126.25) .. controls (461.83,127) and (463.83,127.75) .. (469.08,135.75) .. controls (474.33,143.75) and (476.33,159.75) .. (475.08,168.25) .. controls (473.83,176.75) and (465.58,190.25) .. (464.58,190.75) .. controls (463.58,191.25) and (454.34,194.9) .. (449.97,197.83) .. controls (445.6,200.76) and (441.33,204.75) .. (439.08,207.25) .. controls (436.83,209.75) and (419.08,212.25) .. (404.08,204) .. controls (389.08,195.75) and (381.58,182.5) .. (380.58,180.25) .. controls (379.58,178) and (396.64,153.45) .. (398.83,142.5) .. controls (401.02,131.55) and (397.08,120.25) .. (398.58,119.25) .. controls (400.08,118.25) and (402.08,116) .. (409.33,113.75) -- cycle ;
\draw  [color={rgb, 255:red, 208; green, 2; blue, 27 }  ,draw opacity=1 ][fill={rgb, 255:red, 255; green, 255; blue, 255 }  ,fill opacity=1 ][dash pattern={on 3.75pt off 0.75pt}] (418.42,154.25) .. controls (424.58,150) and (427.17,154.25) .. (439.17,159.75) .. controls (451.17,165.25) and (440.42,169.5) .. (434.67,170.25) .. controls (428.92,171) and (416.67,170.5) .. (417.67,165.25) .. controls (418.67,160) and (412.25,158.5) .. (418.42,154.25) -- cycle ;
\draw  [color={rgb, 255:red, 208; green, 2; blue, 27 }  ,draw opacity=1 ][fill={rgb, 255:red, 247; green, 217; blue, 217 }  ,fill opacity=1 ][dash pattern={on 3pt off 0.75pt}] (192.08,91.25) .. controls (192.33,74) and (210.23,78.75) .. (208.52,96.52) .. controls (206.8,114.28) and (209.41,130.48) .. (256.42,127) .. controls (303.43,123.52) and (266.39,126.28) .. (271.06,125.97) .. controls (275.74,125.66) and (311.7,117.43) .. (291.83,151.25) .. controls (271.97,185.07) and (302.41,187.99) .. (277.05,190.96) .. controls (251.69,193.93) and (251.72,193.63) .. (248.42,196.75) .. controls (245.12,199.87) and (230.16,213.61) .. (212.15,228.58) .. controls (194.13,243.54) and (175.51,245.53) .. (180.58,216.25) .. controls (185.65,186.97) and (158.37,213.59) .. (172.54,192.47) .. controls (186.71,171.35) and (195.34,157.55) .. (197.17,140.5) .. controls (199,123.45) and (191.68,116.21) .. (183.73,104.49) .. controls (175.78,92.76) and (191.83,108.5) .. (192.08,91.25) -- cycle ;
\draw  [color={rgb, 255:red, 208; green, 2; blue, 27 }  ,draw opacity=1 ][fill={rgb, 255:red, 255; green, 255; blue, 255 }  ,fill opacity=1 ][dash pattern={on 3.75pt off 0.75pt}] (215.17,155.5) .. controls (221.33,151.25) and (223.92,155.5) .. (235.92,161) .. controls (247.92,166.5) and (237.17,170.75) .. (231.42,171.5) .. controls (225.67,172.25) and (213.42,171.75) .. (214.42,166.5) .. controls (215.42,161.25) and (209,159.75) .. (215.17,155.5) -- cycle ;
\draw   (174.25,161.17) .. controls (174.25,133.87) and (196.37,111.75) .. (223.67,111.75) .. controls (250.96,111.75) and (273.08,133.87) .. (273.08,161.17) .. controls (273.08,188.46) and (250.96,210.58) .. (223.67,210.58) .. controls (196.37,210.58) and (174.25,188.46) .. (174.25,161.17) -- cycle ;
\draw  [dash pattern={on 4.5pt off 4.5pt}] (135.57,161.17) .. controls (135.57,112.51) and (175.01,73.07) .. (223.67,73.07) .. controls (272.32,73.07) and (311.76,112.51) .. (311.76,161.17) .. controls (311.76,209.82) and (272.32,249.26) .. (223.67,249.26) .. controls (175.01,249.26) and (135.57,209.82) .. (135.57,161.17) -- cycle ;
\draw  [color={rgb, 255:red, 208; green, 2; blue, 27 }  ,draw opacity=1 ][line width=1.5]  (216.92,139.75) .. controls (235.42,176) and (255.67,131.25) .. (252.92,146.25) .. controls (250.17,161.25) and (275.42,161.25) .. (256.92,178.75) .. controls (238.42,196.25) and (167.42,194) .. (190.42,183.75) .. controls (213.42,173.5) and (198.42,103.5) .. (216.92,139.75) -- cycle ;
\draw [color={rgb, 255:red, 0; green, 0; blue, 0 }  ,draw opacity=1 ]   (223.67,161.17) ;
\draw [shift={(223.67,161.17)}, rotate = 0] [color={rgb, 255:red, 0; green, 0; blue, 0 }  ,draw opacity=1 ][fill={rgb, 255:red, 0; green, 0; blue, 0 }  ,fill opacity=1 ][line width=0.75]      (0, 0) circle [x radius= 3.35, y radius= 3.35]   ;
\draw  [color={rgb, 255:red, 208; green, 2; blue, 27 }  ,draw opacity=1 ][line width=1.5]  (419.42,138.75) .. controls (437.92,175) and (458.17,130.25) .. (455.42,145.25) .. controls (452.67,160.25) and (477.92,160.25) .. (459.42,177.75) .. controls (440.92,195.25) and (369.92,193) .. (392.92,182.75) .. controls (415.92,172.5) and (400.92,102.5) .. (419.42,138.75) -- cycle ;
\draw [color={rgb, 255:red, 0; green, 0; blue, 0 }  ,draw opacity=1 ]   (426.17,160.17) ;
\draw [shift={(426.17,160.17)}, rotate = 0] [color={rgb, 255:red, 0; green, 0; blue, 0 }  ,draw opacity=1 ][fill={rgb, 255:red, 0; green, 0; blue, 0 }  ,fill opacity=1 ][line width=0.75]      (0, 0) circle [x radius= 3.35, y radius= 3.35]   ;
\draw [color={rgb, 255:red, 155; green, 155; blue, 155 }  ,draw opacity=1 ]   (252.08,85.5) -- (243.57,107.14) ;
\draw [shift={(242.83,109)}, rotate = 291.49] [color={rgb, 255:red, 155; green, 155; blue, 155 }  ,draw opacity=1 ][line width=0.75]    (6.56,-1.97) .. controls (4.17,-0.84) and (1.99,-0.18) .. (0,0) .. controls (1.99,0.18) and (4.17,0.84) .. (6.56,1.97)   ;
\draw [color={rgb, 255:red, 155; green, 155; blue, 155 }  ,draw opacity=1 ]   (249.67,237.25) -- (243.94,219.16) ;
\draw [shift={(243.33,217.25)}, rotate = 72.43] [color={rgb, 255:red, 155; green, 155; blue, 155 }  ,draw opacity=1 ][line width=0.75]    (6.56,-1.97) .. controls (4.17,-0.84) and (1.99,-0.18) .. (0,0) .. controls (1.99,0.18) and (4.17,0.84) .. (6.56,1.97)   ;
\draw [color={rgb, 255:red, 155; green, 155; blue, 155 }  ,draw opacity=1 ]   (292.58,200.75) -- (280.87,194.89) ;
\draw [shift={(279.08,194)}, rotate = 26.57] [color={rgb, 255:red, 155; green, 155; blue, 155 }  ,draw opacity=1 ][line width=0.75]    (6.56,-1.97) .. controls (4.17,-0.84) and (1.99,-0.18) .. (0,0) .. controls (1.99,0.18) and (4.17,0.84) .. (6.56,1.97)   ;
\draw [color={rgb, 255:red, 155; green, 155; blue, 155 }  ,draw opacity=1 ]   (145.92,161.5) -- (166.08,161.5) ;
\draw [shift={(168.08,161.5)}, rotate = 180] [color={rgb, 255:red, 155; green, 155; blue, 155 }  ,draw opacity=1 ][line width=0.75]    (6.56,-1.97) .. controls (4.17,-0.84) and (1.99,-0.18) .. (0,0) .. controls (1.99,0.18) and (4.17,0.84) .. (6.56,1.97)   ;

\draw (225.67,164.57) node [anchor=north west][inner sep=0.75pt]  [font=\scriptsize,color={rgb, 255:red, 0; green, 0; blue, 0 }  ,opacity=1 ]  {$f( x_{0})$};
\draw (197.42,177.82) node [anchor=north west][inner sep=0.75pt]  [font=\scriptsize,color={rgb, 255:red, 208; green, 2; blue, 27 }  ,opacity=1 ]  {$f( S)$};
\draw (185.42,216.32) node [anchor=north west][inner sep=0.75pt]  [font=\scriptsize,color={rgb, 255:red, 208; green, 2; blue, 27 }  ,opacity=1 ]  {$f'( B)$};
\draw (478.17,138.57) node [anchor=north west][inner sep=0.75pt]  [font=\scriptsize,color={rgb, 255:red, 208; green, 2; blue, 27 }  ,opacity=1 ]  {$f''( B)$};
\draw (326,150.65) node [anchor=north west][inner sep=0.75pt]    {$\Longrightarrow $};
\draw (152,141.65) node [anchor=north west][inner sep=0.75pt]  [color={rgb, 255:red, 155; green, 155; blue, 155 }  ,opacity=1 ]  {$r$};
\draw (286,80.9) node [anchor=north west][inner sep=0.75pt]    {$C'$};
\draw (253.25,202.15) node [anchor=north west][inner sep=0.75pt]    {$C$};

\end{tikzpicture}
    \caption{Proof of Lemma~\ref{lem:bounded-local-mod}.}
    \label{fig:retracting-annulus}
\end{figure}

The following fact is nice to note.

\begin{proposition}\label{prop:stable-surj}
    Suppose $f$ is very stable at $x_0$. Then $f$ is surjective. Moreover, given $x \in X$, if $f'$ is a local modification of $f$ at $x$ then $f'$ is also surjective.
\end{proposition}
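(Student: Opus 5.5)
We argue by contradiction, using that $P$ is a plane so a point can be punctured out. Suppose $f$ is very stable at $x_0$ but some point $p \in P$ is missed by $f$, i.e.\ $p \notin f(X)$. Since $f$ is a regular map and $X$ is locally compact, $f$ is proper, so $f(X)$ is closed; hence there is a small disk around $p$ disjoint from $f(X)$. The plan is to build, for some $\varepsilon$, an $\varepsilon$-local modification $f'$ at $x_0$ which avoids $f(x_0)$, contradicting very stability.

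\textbf{Construction.} Choose a path $\alpha$ in $P$ from $f(x_0)$ to $p$. Since $P \cong \mathbb{R}^2$, there is a homeomorphism (indeed an isotopy) $h : P\setminus\{p\} \to P \setminus \{p\}$, compactly supported in a neighbourhood $N$ of $\alpha$, with $h(f(x_0))$ close to $p$. A cleaner way to say this: $P\setminus\{p\}$ deformation retracts onto $P\setminus D_p$ for a small open disk $D_p$ about $p$, and we can arrange a homeomorphism $\phi$ of $P\setminus\{p\}$, supported in the compact set $N$, sending $f(x_0)$ into $D_p$. Then $\phi^{-1}$ maps $f(X)\subset P\setminus D_p$ into $P \setminus \phi^{-1}(D_p)$, which avoids... this is the wrong direction, so instead take $\phi$ supported in $N$ with $\phi(D_p) \ni f(x_0)$, i.e.\ $\phi$ drags a neighbourhood of $f(x_0)$ into the hole. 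Define $f' = \phi^{-1}\circ f$ on $f^{-1}(N)$ and $f'=f$ elsewhere; this is continuous since $\phi$ is the identity off $N$, and $f'(X)\subset \phi^{-1}(P\setminus D_p) \not\ni f(x_0)$ since $f(x_0)\in\phi(D_p)$. Finally $f^{-1}(N)$ is compact by properness of $f$, hence contained in $B_X(x_0,\varepsilon)$ for $\varepsilon$ large. Thus $f'$ is an $\varepsilon$-local modification at $x_0$ missing $f(x_0)$, contradicting very stability. So $f$ is surjective.

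\textbf{Second statement.} Let $f'$ be a local modification of $f$ at $x$, say agreeing with $f$ outside $B_X(x,\varepsilon_1)$. Suppose $f'$ misses some $p$. By Lemma~\ref{lem:local-mod-regular}, $f'$ is regular, hence proper, so $f'(X)$ is closed and the above construction applies to $f'$: we obtain $f''$ agreeing with $f'$ outside a compact set, hence agreeing with $f$ outside some ball $B_X(x_0,\varepsilon)$ (enlarge $\varepsilon$ to contain $B_X(x,\varepsilon_1)$ and the compact set $f'^{-1}(N)$), and with $f(x_0)\notin f''(X)$. Here the path $\alpha$ should run from $f(x_0)$ to $p$; if $f(x_0)\notin f'(X)$ already, take $f''=f'$ directly. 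Either way $f''$ is a local modification of $f$ at $x_0$ missing $f(x_0)$, contradicting very stability.

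The main obstacle is the careful construction of the compactly supported homeomorphism $\phi$ of $P\setminus\{p\}$ (a ``finger move'' along $\alpha$) and the verification that properness makes the modified region bounded in $X$; both are standard planar topology, using only that $P$ is a plane and $f$ (and $f'$) are proper.
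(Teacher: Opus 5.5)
Your argument is essentially the paper's: compose $f$ (respectively the non-surjective modification $f'$) with a compactly supported homeomorphism of $P$ that drags the missed point onto, or next to, $f(x_0)$, and use regularity/properness to see that the result differs from $f$ only on a bounded set, hence is a local modification at $x_0$ missing $f(x_0)$.

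There is one slip in the construction. With $f'=\phi^{-1}\circ f$ you need $\phi(f(x_0))\in D_p$; that was your first formulation, which you wrongly discarded. The condition $f(x_0)\in\phi(D_p)$ goes with $\phi\circ f$ instead.

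The puncture at $p$ is also unnecessary. The paper simply takes a compactly supported diffeomorphism $F$ of $P$ with $F(z)=f(x_0)$. Then $F\circ f$ misses $f(x_0)$ because $F$ is a bijection, and this does not require $f(X)$ to be closed.
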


\begin{proof}
    Suppose $f$ not surjective. Write $y_0 = f(x_0)$. Fix some $z \in P \setminus f(X)$. It is easy to see that there exists a diffeomorphism $F : P \to P$ such that $F(z) = y_0$, where $F$ acts as the identity outside of a fixed ball $B$ around $y_0$. We can then consider the map $f' := F \circ f$. Since $f$ is regular, we have that $f'$ is a $\delta$-local modification at $x_0$ for some large choice of $\delta > 0$. Note that $y_0 \not \in f'(X)$. In particular, this shows that $f$ is not $\delta$-stable at $x_0$.

    For the second claim, suppose $f''$ is a local modification at $x$ which is not surjective. Then we may apply an identical argument to show that $f$ is not $\delta$-stable at $x_0$ for some large $\delta$. 
\end{proof}

The style of argument in the proof of Proposition~\ref{prop:stable-surj} will be used several times in what is to come, so we encourage the reader to make sure they are comfortable with it.
Observe that this proposition has the following nice consequence.

\begin{proposition}\label{prop:verystableeverywhere}
    Suppose $f$ is very stable at some $x_0 \in X$. Then $f$ is very stable at every $x \in X$.
\end{proposition}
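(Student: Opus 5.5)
We argue by contradiction, in the style of the proof of Proposition~\ref{prop:stable-surj}. Suppose $f$ is very stable at $x_0$ but fails to be $\varepsilon$-stable at some $x \in X$ for some $\varepsilon > 0$. Then there is an $\varepsilon$-local modification $f'$ of $f$ at $x$ with $y := f(x) \notin f'(X)$. By Lemma~\ref{lem:bounded-local-mod} we may assume that $f'$ moves points by at most $2\varepsilon$, though this is not essential. The aim is to turn $f'$ into a local modification at $x_0$ that misses $f(x_0)$. This would contradict very stability at $x_0$.

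The construction proceeds as follows. Since $f'(X)$ is closed, as $f'$ is proper by Lemma~\ref{lem:local-mod-regular}, and does not contain $y$, there is a small open disc around $y$ that $f'$ avoids. Choose a compactly supported diffeomorphism $F\colon P \to P$, equal to the identity outside a large ball $B$ containing $y$ and $f(x_0)$, with $F(y) = f(x_0)$. Consider $f'' := F \circ f'$. Then $f(x_0) = F(y) \notin F(f'(X)) = f''(X)$. It remains to check that $f''$ is a local modification at $x_0$. It agrees with $f$ at every point $z$ with $z \notin B_X(x,\varepsilon)$ and $f(z) \notin B$; there $f'(z) = f(z)$ lies outside $B$, so $F$ fixes it.

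The set $f^{-1}(B) \cup B_X(x,\varepsilon)$ is bounded, because $f$ is regular and hence proper, and $B$ is bounded. So it is contained in $B_X(x_0,\delta)$ for some large $\delta$. Hence $f''$ agrees with $f$ outside $B_X(x_0,\delta)$, and $f''$ is a $\delta$-local modification of $f$ at $x_0$ whose image misses $f(x_0)$. This contradicts $\delta$-stability at $x_0$, so $f$ is $\varepsilon$-stable at $x$ for every $\varepsilon$, that is, very stable at $x$.

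The only delicate point is the existence of the compactly supported diffeomorphism $F$ of the plane moving $y$ to $f(x_0)$. This is standard: $P \cong \mathbb{H}^2$ is a connected manifold, so one can flow along a compactly supported vector field pushing one point to the other. If $y = f(x_0)$, take $F$ to be the identity. Continuity of $f''$ is immediate, since it is a composition of continuous maps. Note that we never needed $f''$ to be simplicial, since local modifications are only required to be continuous.
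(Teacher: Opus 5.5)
Your proof is correct and is essentially the paper's argument. The paper deduces this from Proposition~\ref{prop:stable-surj}, whose proof uses exactly your construction: post-compose a non-surjective local modification with a compactly supported diffeomorphism sending the missed point to $f(x_0)$, then use properness of $f$ to see the result is a $\delta$-local modification at $x_0$. You have simply inlined that argument. Your open-disc remark is unnecessary, since $F$ is a bijection and so $f(x_0)\in F(f'(X))$ if and only if $y\in f'(X)$.
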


\begin{proof}
    If $f$ were not very stable at some $x \in X$, then there is a local modification of $f$ which is not surjective. But then $f$ cannot be very stable by Proposition~\ref{prop:stable-surj}.
\end{proof}

In light of Proposition~\ref{prop:verystableeverywhere}, it makes sense to refer $f$ as simply being \emph{very stable}, with no reference to a particular basepoint.
We now proceed with proving the following theorem, which is the main result of this subsection.

\begin{theorem}\label{thm:stable-map-exists}
    There exists a regular simplicial map $f : X \to P$ which is width-visible and very stable.
\end{theorem}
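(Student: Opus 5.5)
We begin by fixing, via Proposition~\ref{prop:rank-visibility-whole-map}, a simplicial regular map $f : X \to P$ of minimal width $k = \width(f)$ among all simplicial regular maps $X \to P$; it is $R$-width-visible for some $R$. By Proposition~\ref{prop:verystableeverywhere}, it suffices to produce such a map which is very stable at a single point. The plan is to show that if no minimal-width map is very stable, we can repeatedly apply local modifications to lower the width, contradicting minimality. Note that any map obtained from $f$ by local modifications and re-simplicialisation still has width at least $k$, so width-visibility can always be recovered by re-applying Proposition~\ref{prop:rank-visibility-whole-map} to the improved map. The real task is therefore stability.

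Suppose that for every minimal-width $f$ and every $x_0$ there is some $\varepsilon = \varepsilon(f,x_0)$ for which $f$ is not $\varepsilon$-stable at $x_0$. We first make $\varepsilon$ uniform. Since $X$ and $P$ are cocompact and the relevant maps are uniformly regular, Arzel\`a--Ascoli yields a uniform $\varepsilon_0$. Concretely: if the required radii at points $x_n$ tended to infinity, translate by automorphisms so that $x_n$ sits in a fixed compact set and pass to a limit map $f_\infty$. This limit is again a minimal-width simplicial regular map, but it would be very stable at the limit point, and Proposition~\ref{prop:verystableeverywhere} would then finish the proof. So we may assume every vertex $x_0$ admits an $\varepsilon_0$-local modification avoiding $f(x_0)$. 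By Lemma~\ref{lem:bounded-local-mod}, these modifications can be chosen to move points by at most $2\varepsilon_0$.

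Next we use the modifications to kill the special vertices $\mathscr S$. Around each special vertex $u$ the preimage $f^{-1}(u)$ consists of $k$ points within diameter $2R$. A modification at one of these points that avoids $u$ then lowers $|f^{-1}(u)|$. To do this simultaneously we use $\operatorname{asdim}(P)=2$ (or $\operatorname{asdim} X$) to split the set of relevant regions into finitely many colour classes, each consisting of $D$-separated pieces of uniformly bounded diameter, with $D \gg \varepsilon_0, R$. On one colour class at a time we apply the modifications at once; they have disjoint supports, so they compose. Lemma~\ref{lem:bounded-local-mod} keeps the result within bounded distance of $f$, hence regular by Lemma~\ref{lem:local-mod-regular}. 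After simplicial approximation, we must check that no new points of multiplicity $k$ are created near the modified regions. For this we would arrange the modifications, through the retraction in Lemma~\ref{lem:bounded-local-mod}, to push images only into a small neighbourhood, and use a stronger choice: a modification avoiding an entire small disk around $f(x_0)$ rather than a single point. After finitely many rounds, no vertex has full multiplicity $k$ seen within radius $R$, so the width-visibility constant fails. Combining with Arzel\`a--Ascoli as in Proposition~\ref{prop:rank-visibility-whole-map} yields a map of width $<k$, which is a contradiction.

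The main obstacle is this last bookkeeping step. We need to ensure that simultaneous local modifications on a sparse set genuinely decrease multiplicity, without creating new full-multiplicity vertices elsewhere or breaking simpliciality. The first thing to try is working with the "full multiplicity within a ball" count, and using that avoided points can be thickened to avoided disks by compactness of the modified region.
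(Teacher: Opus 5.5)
Your outer structure matches the paper's. You fix a minimal-width $f$, find points of arbitrarily large stability radius, translate them to a fixed $x_0$, and take an Arzel\`a--Ascoli limit. The gap is the core step, where you try to turn ``no $\varepsilon_0$-stable points'' into ``the width can be lowered''. That mechanism cannot work.

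Pushing the image of $B_X(x_0,\varepsilon_0)$ off $u=f(x_0)$ (e.g.\ by the retraction of Lemma~\ref{lem:bounded-local-mod}) only relocates that image into the annulus around $u$, on top of the sheets of $f$ already there. So multiplicities near $u$ can go up, even beyond $k$. Nothing forces the width below $k$, and once it exceeds $k$, Proposition~\ref{prop:rank-visibility-whole-map} no longer applies.

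More decisively, your argument never uses that $X$ is simply connected or that $\operatorname{asdim} X\ge 2$; finite asymptotic dimension only enters your colouring step. The statement is false without these hypotheses. Take $X$ a subdivided regular tree with a regular simplicial map to $P$, and let $\varepsilon>2R$ with $f(x_0)$ special. The closed ball $B_X(x_0,\varepsilon)$ is a finite tree meeting the rest of $X$ in finitely many points, and their images miss $f(x_0)$ because the whole fibre lies within $2R$ of $x_0$. So the ball can be remapped into the path-connected set $P\setminus\{f(x_0)\}$ keeping those points fixed, and no such point is $\varepsilon$-stable. Yet a minimal width obviously exists, since widths are positive integers. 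Your argument would therefore derive a contradiction from a true hypothesis.

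The missing idea is that the contradiction must come from topology and asymptotic dimension, not from minimality of the width. The paper argues as follows.
\begin{enumerate}
\item Suppose no vertex with special image is $\varepsilon$-stable.
\item Choose a maximal $10\varepsilon$-separated $S\subset P$ with $S\cap f(X)\subset\mathscr S$. This is possible because width-visibility puts every point of $f(X)$ within $R$ of a special vertex.
\item Paste the bounded modifications of Lemma~\ref{lem:bounded-local-mod} to get a continuous regular $f'$ whose image misses $B=B_P(S;\varepsilon/5)$.
\item Since $X$ is simply connected, $f'$ lifts to a regular map into $\widetilde{P\setminus B}$.
\item That space is a quasi-tree by Manning's bottleneck criterion \cite[Thm.~4.6]{manning2005geometry}, which forces $\operatorname{asdim} X\le 1$, a contradiction.
\end{enumerate}

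Specialness is also needed in your limiting step, which as written does not restrict to points with special image. $\varepsilon$-stability at $x_0$ can hold merely because $f^{-1}(f(x_0))$ has points outside $B_X(x_0,\varepsilon)$, and these can escape to infinity in the limit, so the limit need not be very stable. The paper avoids this by taking the stable points $x$ with $f(x)\in\mathscr S$. Then the whole fibre lies within $2R$ of $x$, so a modification of the limit map near $x_0$ transfers to the approximating maps $f_n$.
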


\begin{proof}
    Fix a regular simplicial map $f : X \to P$ such that $\width(f)$ is minimal amongst all such maps. By Proposition~\ref{prop:rank-visibility-whole-map} we have that that $f$ is $R$-width-visible for some $R > 0$.  We first have the following claim.

    \begin{claim}
        For all $\varepsilon > 0$ there exists a vertex $x \in V(X)$ such that $f$ is $\varepsilon$-stable at $x$. Moreover, we can take each $x$ such that $f(x)$ is special.
    \end{claim}

    \begin{proof}
		Suppose this were not the case for some fixed $\varepsilon > 0$. Choose a maximal $10\varepsilon$-separated subset $S \subset P$. Since $f$ is $1$-Lipschitz, every $y \in f(X)$ is within distance $R$ of a special vertex, so by assuming $\varepsilon > R$ as we may, we can find such $S$ satisfying $S \cap f(X) \subset \mathscr S$.
        
        Using Lemma~\ref{lem:bounded-local-mod} and pasting together all of the local modifications at each $s \in S$ we produce a continuous regular map $f' : X \to P$  such that $f'(X) \cap S = \emptyset$.  In fact, it is easy to see that we may take $f'$ such that $f'(X) \cap B = \emptyset$, where $B = B_P(S;\varepsilon/5)$. However, since $f'$ is continuous and $X$ is simply connected, we may lift $f'$ to a regular map $\widetilde {f'} : X \to \widetilde {P \setminus B}$ to the universal cover. But $\widetilde {P \setminus B}$, which is a geodesic space, is easily seen to be quasi-isometric to a tree by Manning's bottleneck criterion \cite[Thm.~4.6]{manning2005geometry}. However, then $X$ has asymptotic dimension at most 1, which is a contradiction.
    \end{proof}

	Since both $X$ and $P$ are quasi-transitive, we can now translate all these increasingly stable points to one point $x_0$, giving a sequence of maps $f_n$ such that $f_n$ is $n$-stable at $x_0$, and $f_n(x_0) = f_m(x_0)$ for all $n,m \geq 1$. Now, use the Arzel\`a--Ascoli theorem to take a limit $\widehat f$ of the $f_n$. This will satisfy $\width(\widehat f) = \width(f)$, and so since the width of $f$ was chosen to be minimal we again have by Proposition~\ref{prop:rank-visibility-whole-map} that $\widehat f$ is width-visible. 
    We now verify the stability of $\widehat f$.

    \begin{claim}
         $\widehat f$ is very stable.
    \end{claim}

    \begin{proof}
        Write $y_0 = \widehat f(x_0)$. 
        Suppose $\widehat f$ is not $\varepsilon$-stable at $x_0$, for some fixed choice of $\varepsilon > 0$. Without loss of generality, let us assume that $\varepsilon > 10R$. Let $f'$ be a $\varepsilon$-local modification of $\widehat f$ at $x_0$ such that $y_0 \not \in f'(X)$. 
        Choose $n > 0$ such that 
        $$
        f_n|_{B(x_0,\varepsilon)} = \widehat f|_{B(x_0,\varepsilon)}.
        $$
        We also assume that $n$ is chosen to be sufficiently large so that $f_n$ is $\varepsilon$-stable at $x_0$.
        The local modification of $\widehat f$ induces a local modification of $f_n$, say $f'_n$. This also satisfies
        $$
        f_n'|_{B(x_0,\varepsilon)} = \widehat f'|_{B(x_0,\varepsilon)}.
        $$
        We claim that $ y_0 \not \in f'_n(X)$.
		Indeed, suppose there exists $z \in X$ such that $f_n'(z) = y_0$. Either $z \in B_X(x_0,\varepsilon)$ or $z \not \in B_X(x_0,\varepsilon)$. Since $\varepsilon > 2R$ and $\widehat f(x_0)=f_n(x_0)$ is special for $f_n$, the latter cannot happen, so we must have that $z \in B_X(x_0,\varepsilon)$. But in this case, since $f_n'$ and $\widehat f'$ agree on the $\varepsilon$-ball around $x_0$, this also cannot happen. Thus, such a $z$ does not exist, so $ y_0 \not \in f'_n(X)$.
        However, this is a contradiction, as $f_n$ is $\varepsilon$-stable at $x_0$. Thus, it follows that $\widehat f$ is $\varepsilon$-stable at $x_0$, and so since $\varepsilon$ was arbitrary we can conclude that $\widehat f$ is very stable. 
    \end{proof}

    This concludes the proof of the theorem.
\end{proof}

\subsection{Upgrading to a quasi-isometry}\label{sec:upgrade-to-qi}

The third and final step is to show that our width-visible, very stable map $f : X \to P$ is actually secretly a quasi-isometry.

\begin{theorem}
    A regular simplicial map $f:X \to P$ which is width-visible and very stable is a quasi-isometry.
\end{theorem}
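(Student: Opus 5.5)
Since $f$ is simplicial and regular, it is already coarsely Lipschitz, and by Proposition~\ref{prop:stable-surj} very stability makes it surjective. So what remains is the lower bound: there should be a function $\rho$ with $\rho(t)\to\infty$ such that $\dist_P(f(x),f(y))\geq \rho(\dist_X(x,y))$. That is, $f$ is a coarse embedding. Once we have this, both $X$ and $P$ are geodesic and cocompact, and a surjective coarse embedding between geodesic spaces is a quasi-isometry by the standard argument: coarse surjectivity gives a coarse inverse, which is coarsely Lipschitz because $P$ is geodesic. So the plan is to rule out pairs of points $x,y$ that are far apart in $X$ but have nearby images.

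I will argue by contradiction. Suppose there are $x_n,y_n$ with $\dist_X(x_n,y_n)\to\infty$ but $\dist_P(f(x_n),f(y_n))\leq D$. Using width-visibility, move $x_n$ and $y_n$ a distance at most $R$ so that both map into a uniformly bounded neighbourhood of special vertices. Then use cocompactness of the automorphism groups of $X$ and $P$ together with Arzel\`a--Ascoli to translate everything to a fixed basepoint and pass to a limit. The limit is a simplicial map $\widehat f$ of the same width, hence width-visible by Proposition~\ref{prop:rank-visibility-whole-map}, and very stable by the same limiting argument used in Theorem~\ref{thm:stable-map-exists}.

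The key geometric step is a local degree count. Take a special vertex $u$ near $f(x_n)$. Its preimage consists of exactly $k=\width(f)$ points, all lying in a $2R$-ball $B_n$ around some point. Since $f(y_n)$ is close to $u$, the point $y_n$ lies far from $B_n$. Very stability at a point $x\in f^{-1}(u)$ says that no modification supported in a ball around $x$ can push the image off $u$. I want to turn this into the statement that $f$ restricted to a neighbourhood of $B_n$ has nonzero local degree over $u$, or more precisely over a small disc $D\ni u$, for mod-2 or integral degree computed from $H_2(X, X\setminus f^{-1}(D))\to H_2(D,\partial D)$. Here I use that $X$ is a simply connected $2$-complex and $P$ is a plane. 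If that degree vanished, the map $f$ near $B_n$ could be homotoped rel boundary off $u$: extend the pushed map on the $2$-cells meeting $f^{-1}(D)$ using the fact that $D\setminus\{u\}$ deformation retracts to $\partial D$ and the obstruction is the degree. That would contradict stability.

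Then comes the counting step. Every point of $P$ sufficiently near $u$ is also hit from near $B_n$, with multiplicity governed by the degree. Now look at the special vertices near $f(y_n)$. By width-visibility there are $k$ preimages clustered within $2R$ of some point near $y_n$, and by the degree argument these preimages are locally forced, i.e.\ stably present. Since $f(y_n)$ and $u$ are within bounded distance, choose a path in $P$ between a special vertex near $f(x_n)$ and one near $f(y_n)$. Along this path, the stably-covered multiplicity coming from near $B_n$ and that coming from near $y_n$ both persist: the degree is locally constant off the image of the boundary, and the two clusters are far apart in $X$, so their local contributions are independent. This forces some vertex to have more than $k$ preimages, contradicting the definition of width.

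I expect this last step to be the main obstacle. The difficulty is making the degree locally constant along a path in $P$ while controlling where the boundary sphere around $B_n$ maps, since the images of spheres may wander. I would first try to work directly with the limit map, where the far-apart pair becomes a pair $x_\infty,y_\infty$ at infinite distance with the same image, and the argument above can be run with balls of arbitrarily large radius.
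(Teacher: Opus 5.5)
There is a genuine gap, and it is the step you flag yourself. A contradiction with $\width(f)=k$ is only available at a special vertex $u'$ near $f(y_n)$, where specialness puts all $k$ preimages within $2R$ of $y_n$. At other points of your path, stability or nonvanishing degree only forces each cluster to contribute at least one preimage, and these need not total more than $k$. So everything hinges on showing that $u'$ is hit from a bounded region around the cluster $B_n$. With a metric-ball neighbourhood of $B_n$ you cannot control where its boundary maps, so you cannot carry the covering property from $u$ to $u'$.

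Two further problems. First, for a $2$-complex the obstruction to pushing $f$ off $u$ rel boundary is a class in $H^2(V,\partial V;\Z)$. It can be torsion and hence invisible to the integral or mod-$2$ $H_2$-degree you propose, so ``stable $\Rightarrow$ nonzero degree'' is not justified as stated. Second, the limiting step loses the configuration: after translating $x_n$ to a basepoint, $y_n$ leaves every compact set, so there is no $y_\infty$. No limit is needed anyway, since the bound can be proved uniformly.

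The missing idea is to take the neighbourhood to be a component of a preimage rather than a ball in $X$, and then to use very stability directly via the push-off trick of Proposition~\ref{prop:stable-surj}, with no degree theory. For $x\in f^{-1}(\mathscr S)$ and $T>0$, let $V$ be the component of $f^{-1}(B(f(x),T+2R))$ containing the fibre $f^{-1}(f(x))$. This is a single component, since $f$ is $1$-Lipschitz and the fibre has diameter at most $2R$. Regularity gives $\diam(V)\le C(T)$ independently of $x$. Moreover the frontier of $V$ maps to the sphere of radius $T+2R$ about $f(x)$, a definite distance from $B(f(x),T)$; this is exactly the boundary control you lacked.

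Now suppose $x'\in f^{-1}(\mathscr S)$ has $\dist_P(f(x),f(x'))<T$ but $\dist_X(x,x')>C(T)+2R$. Then the whole $k$-point fibre of $f(x')$ lies outside $V$, so $f(x')\notin f(V)$. Choose a diffeomorphism $F$ of $P$ supported in $B(f(x),T)$ with $F(f(x'))=f(x)$. Set $f'=F\circ f$ on $V$ and $f'=f$ off $V$. This is a continuous $C(T)$-local modification at $x$ whose image misses $f(x)$, contradicting very stability.

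In your own terms, the same trick shows that $f|_V$ covers all of $B(f(x),T)$, so $f(x')$ would acquire a $(k+1)$-st preimage in $V$. This yields an explicit lower control function $\rho_-$. Your reduction to $f^{-1}(\mathscr S)$ and the final upgrade to a quasi-isometry then go through as you describe.
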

\begin{proof}
    Since $f$ is surjective (by Proposition~\ref{prop:stable-surj}), coarsely Lipschitz and $P$ is geodesic, it suffices to show that $f$ is uniformly proper.

    Since $f$ is width-visible, it suffices to check that for all $x,x' \in f^{-1}(\mathscr{S})$, we have that 
    $$
    \dist_P(f(x),f(x')) \geq \rho_-(\dist_X(x,x')),
    $$
    for some $\rho_-(t)\to \infty$ as $t \to \infty$.

    Given $x \in f^{-1}(\mathscr{S})$, and given $T > 0$,
    let $V$ be the connected component of $f^{-1}(B(f(x),T+2R))$ which contains $f^{-1}(f(x))$.
    By regularity, $\diam(V) \leq C(T)$ for some function $C(T)$ independent of $x$.
    Suppose $x' \in f^{-1}(\mathscr{S})$ satisfies $\dist_X(x,x') > C(T)+2R$ but $\dist_P(f(x),f(x')) < T$.  Then $f(x') \notin f|_{B(x,C(T))}$.
    
    Let $F:P \to P$ be a diffeomorphism which is the identity outside $B(f(x),T)$ and inside $B(f(x),T)$ moves $f(x')$ to $f(x)$.
    Then define a local modification $f':X \to P$ of $f$ by $f' = f$ on $X \setminus V$, and $f' = F \circ f$ on $V$.
    Then $f(x) \notin f'(X)$, which contradicts that $f(x)$ is a very stable point of $f$ (Proposition~\ref{prop:verystableeverywhere}).
    Finally, let $\rho_-(t) = \max\{T:C(T)+R \leq t\}$.
\end{proof}

This concludes the proof of Theorem~\ref{thm:upgrading-reg-to-qi}, stated at the start of this section.
We are now ready to finish the proof of our main result, Theorem~\ref{thm:main-result}. The only work left to do is to observe that accessibility allows us to extend beyond the realm of one-ended graphs.

\mainresult*

\begin{proof}
    Let $\Gamma$ be an almost planar, coarsely simply connected, connected, locally finite, quasi-transitive graph. By \cite{hamann2018accessibility}, we have that $\Gamma$ is an accessible graph, as such it can be factorised into a `tree-amalgam' of a  finite collection $\{H_1, \ldots, H_n\}$ of connected, locally finite, quasi-transitive graphs with at most one end (see \cite{hamann2025tree} for definitions).  It is easy to see that these graphs are also almost planar and coarsely simply connected. By Theorems~\ref{thm:intro-hyperbolic-plane} and \ref{thm:upgrading-reg-to-qi}, each of the $H_i$ is either finite or quasi-isometric to either the Euclidean or hyperbolic plane. By \cite{hamann2025tree}, we conclude that $\Gamma$ is quasi-isometric to a free product of free and surface groups, and in particular is quasi-isometric to a planar graph. 
\end{proof}

We will finish this section by providing a direct proof of Corollary~\ref{cor:intro-fpgroups} which does not use the machinery of \cite{macmanus2023accessibility}.

\maincorollary*

\begin{proof}
    Let $G$ be an almost planar, finitely presented group.
    By Dunwoody's accessibility theorem \cite{dunwoody1985accessibility}, $G$ splits as a graph of groups with finite edge groups and finitely presented vertex groups $H_v$ with at most one end. By Theorems~\ref{thm:intro-hyperbolic-plane} and \ref{thm:upgrading-reg-to-qi}, each infinite $H_v$ is quasi-isometric to either the Euclidean plane or the hyperbolic plane. This implies that the vertex groups are all finite or virtual surface groups. Note that virtual surface groups are residually finite and virtually torsion-free.
    It is a classical fact that a group which splits as a graph of groups with finite edge groups and residually finite, virtually torsion-free edge groups is itself residually finite and virtually torsion free. In particular, $G$ has a finite-index subgroup $H$ which splits as a free product of free and surface groups, and so $H$ admits a planar Cayley graph \cite[Thm.~3]{arzhantseva2004cayley}.
\end{proof}

\bibliography{references}

@article{georgakopoulos2010lamplighter,
  title={Lamplighter graphs do not admit harmonic functions of finite energy},
  author={Georgakopoulos, Agelos},
  journal={Proceedings of the American Mathematical Society},
  pages={3057--3061},
  year={2010},
  publisher={JSTOR}
}

@article{benjamini1996harmonic,
  title={Harmonic functions on planar and almost planar graphs and manifolds, via circle packings},
  author={Benjamini, Itai and Schramm, Oded},
  journal={Inventiones mathematicae},
  volume={126},
  number={3},
  pages={565--587},
  year={1996},
  publisher={Springer}
}

@article{bonk2000embeddings,
 AUTHOR = {Bonk, M. and Schramm, O.},
     TITLE = {Embeddings of {G}romov hyperbolic spaces},
   JOURNAL = {Geom. Funct. Anal.},
  FJOURNAL = {Geometric and Functional Analysis},
    VOLUME = {10},
      YEAR = {2000},
    NUMBER = {2},
     PAGES = {266--306},
      ISSN = {1016-443X,1420-8970},
   MRCLASS = {53C23 (54E40 57M07)},
}

@article{georgakopoulos2025graph,
  title={Graph minors and metric spaces},
  author={Georgakopoulos, Agelos and Papasoglu, Panos},
  journal={Combinatorica},
  volume={45},
  number={3},
  pages={33},
  year={2025},
  publisher={Springer}
}

@article{bonamy2023asymptotic,
  title={Asymptotic dimension of minor-closed families and {Assouad--Nagata} dimension of surfaces},
  author={Bonamy, Marthe and Bousquet, Nicolas and Esperet, Louis and Groenland, Carla and Liu, Chun-Hung and Pirot, Fran{\c{c}}ois and Scott, Alexander},
  journal={Journal of the European Mathematical Society},
  volume={26},
  number={10},
  pages={3739--3791},
  year={2023}
}

@article {spielman-teng-07-spectral-planar,
    AUTHOR = {Spielman, Daniel A. and Teng, Shang-Hua},
     TITLE = {Spectral partitioning works: planar graphs and finite element
              meshes},
   JOURNAL = {Linear Algebra Appl.},
  FJOURNAL = {Linear Algebra and its Applications},
    VOLUME = {421},
      YEAR = {2007},
    NUMBER = {2-3},
     PAGES = {284--305},
      ISSN = {0024-3795,1873-1856},
   MRCLASS = {15A18 (05C50 05C70 05C90)},
  MRNUMBER = {2294342},
MRREVIEWER = {Nicola\ Guglielmi},
       DOI = {10.1016/j.laa.2006.07.020},
       URL = {https://doi-org.bris.idm.oclc.org/10.1016/j.laa.2006.07.020},
}

@article{benjamini2012separation,
  title={On the separation profile of infinite graphs},
  author={Benjamini, Itai and Schramm, Oded and Tim{\'a}r, {\'A}d{\'a}m},
  journal={Groups, Geometry, and Dynamics},
  volume={6},
  number={4},
  pages={639--658},
  year={2012}
}

@article{lazarovich2025hyperbolic,
  title={Hyperbolic groups with logarithmic separation profile},
  author={Lazarovich, Nir and Le Coz, Corentin},
  journal={Algebraic \& Geometric Topology},
  volume={25},
  number={1},
  pages={39--54},
  year={2025},
  publisher={Mathematical Sciences Publishers}
}

@inproceedings{gournay2023separation,
  title={Separation profile, isoperimetry, growth and compression},
  author={Gournay, Antoine and Le Coz, Corentin},
  booktitle={Annales de l'Institut Fourier},
  volume={73},
  number={4},
  pages={1627--1675},
  year={2023}
}

@article{hume2020poincare,
  title={Poincar{\'e} profiles of groups and spaces},
  author={Hume, David and Mackay, John M. and Tessera, Romain},
  journal={Revista Matem{\'a}tica Iberoamericana},
  volume={36},
  number={6},
  pages={1835--1886},
  year={2020}
}

@article{hume2022poincare,
  title={Poincar{\'e} profiles of {Lie} groups and a coarse geometric dichotomy},
  author={Hume, David and Mackay, John M. and Tessera, Romain},
  journal={Geometric and Functional Analysis},
  volume={32},
  number={5},
  pages={1063--1133},
  year={2022},
  publisher={Springer}
}

@book{woess2000random,
  title={Random walks on infinite graphs and groups},
  author={Woess, Wolfgang},
  number={138},
  year={2000},
  publisher={Cambridge University Press}
}

@article{medolla1995extension,
  title={Extension of {Foster’s} averaging formula to infinite networks with moderate growth},
  author={Medolla, Guia and Soardi, Paolo M},
  journal={Mathematische Zeitschrift},
  volume={219},
  number={1},
  pages={171--185},
  year={1995},
  publisher={Springer}
}

@article{macmanus2023accessibility,
  title={Accessibility, planar graphs, and quasi-isometries},
  author={MacManus, Joseph Paul},
  journal={arXiv preprint arXiv:2310.15242},
  year={2023}
}

@article{saloff1995isoperimetric,
  title={Isoperimetric inequalities and decay of iterated kernels for almost-transitive {Markov} chains},
  author={Saloff-Coste, Laurent},
  journal={Combinatorics, Probability and Computing},
  volume={4},
  number={4},
  pages={419--442},
  year={1995},
  publisher={Cambridge University Press}
}

@article{hume2020poorly,
  title={Poorly connected groups},
  author={Hume, David and Mackay, John M.},
  journal={Proceedings of the American Mathematical Society},
  volume={148},
  number={11},
  pages={4653--4664},
  year={2020}
}

@article{hume-mackay-25-round-trees,
Author = {David Hume and John M. Mackay},
Title = {Connecting conformal dimension and {Poincaré} profiles},
Year = {2025},
journal={arXiv preprint arXiv:2511.10469},
}

@article {TukiaVaisala-80-qs,
    AUTHOR = {Tukia, P. and V{\"a}is{\"a}l{\"a}, J.},
     TITLE = {Quasisymmetric embeddings of metric spaces},
   JOURNAL = {Ann. Acad. Sci. Fenn. Ser. A I Math.},
  FJOURNAL = {Suomalaisen Tiedeakatemian Toimituksia. Sarja A. Annales
              Academiae Scientiarum Fennicae. Series A I. Mathematica},
    VOLUME = {5},
      YEAR = {1980},
    NUMBER = {1},
     PAGES = {97--114},
      ISSN = {0066-1953},
     CODEN = {AAFMAT},
   MRCLASS = {30C60 (54C99)},
}

@article{macmanus2024note,
  title={A note on quasi-transitive graphs quasi-isometric to planar {(Cayley)} graphs},
  author={MacManus, Joseph Paul},
  journal={arXiv preprint arXiv:2407.13375},
  year={2024}
}

@article{lipton1979separator,
  title={A separator theorem for planar graphs},
  author={Lipton, Richard J and Tarjan, Robert Endre},
  journal={SIAM Journal on Applied Mathematics},
  volume={36},
  number={2},
  pages={177--189},
  year={1979},
  publisher={SIAM}
}

@inproceedings{ancona2007positive,
  title={Positive harmonic functions and hyperbolicity},
  author={Ancona, Alano},
  booktitle={Potential Theory Surveys and Problems: Proceedings of a Conference held in Prague, July 19--24, 1987},
  pages={1--23},
  year={2007},
  organization={Springer}
}

@book{bridson2013metric,
  title={Metric spaces of non-positive curvature},
  author={Bridson, Martin R and Haefliger, Andr{\'e}},
  volume={319},
  year={2013},
  publisher={Springer Science \& Business Media}
}

@article{hamann2018accessibility,
  title={Accessibility in transitive graphs},
  author={Hamann, Matthias},
  journal={Combinatorica},
  volume={38},
  number={4},
  pages={847--859},
  year={2018},
  publisher={Springer}
}

@article{maschke1896representation,
  title={The representation of finite groups, especially of the rotation groups of the regular bodies of three-and four-dimensional space, by {Cayley's} color diagrams},
  author={Maschke, Heinrich},
  journal={American Journal of Mathematics},
  volume={18},
  number={2},
  pages={156--194},
  year={1896},
  publisher={JSTOR}
}

@article{bowditch2004planar,
  title={Planar groups and the {Seifert} conjecture},
  author={Bowditch, Brian H},
  journal={Journal f{\"u}r die reine und angewandte Mathematik},
  volume={576},
  pages={11--62},
  year={2004}
}

@article{dunwoody1985accessibility,
  title={The accessibility of finitely presented groups.},
  author={Dunwoody, Martin J},
  journal={Inventiones mathematicae},
  volume={81},
  pages={449--458},
  year={1985}
}

@article{richter20023,
  title={3-connected planar spaces uniquely embed in the sphere},
  author={Richter, R. Bruce and Thomassen, Carsten},
  journal={Transactions of the American Mathematical Society},
  volume={354},
  number={11},
  pages={4585--4595},
  year={2002}
}

@book{zieschang1970flachen,
  title={Fl{\"a}chen und ebene diskontinuierliche {Gruppen}},
  author={Zieschang, Heiner and Vogt, Elmar and Coldewey, H-D},
  year={1970},
  publisher={Springer}
}

@article{wilkie1966non,
  title={On {non-Euclidean} crystallographic groups},
  author={Wilkie, Hugh Campbell},
  journal={Mathematische Zeitschrift},
  volume={91},
  pages={87--102},
  year={1966},
  publisher={Springer}
}

@article{dunwoody1993inaccessible,
  title={An inaccessible group},
  author={Dunwoody, Martin J},
  journal={Geometric group theory},
  volume={1},
  pages={75--78},
  year={1993},
  publisher={Cambridge University Press Cambridge}
}

@article{droms2006infinite,
title = {Infinite-ended groups with planar {Cayley} graphs},
author = {Carl Droms},
pages = {487--496},
volume = {9},
number = {4},
journal = {Journal of Group Theory},
year = {2006},
}

@article{georgakopoulos2019planari,
  title={The planar {Cayley} graphs are effectively enumerable {I}: consistently planar graphs},
  author={Georgakopoulos, Agelos and Hamann, Matthias},
  journal={Combinatorica},
  volume={39},
  number={5},
  pages={993--1019},
  year={2019},
  publisher={Springer}
}

@article{casson1994convergence,
  title={Convergence groups and {Seifert} fibered 3-manifolds},
  author={Casson, Andrew and Jungreis, Douglas},
  journal={Inventiones mathematicae},
  volume={118},
  number={1},
  pages={441--456},
  year={1994},
  publisher={Springer}
}

@article{gabai1992convergence,
  title={Convergence groups are {Fuchsian} groups},
  author={Gabai, David},
  journal={Annals of Mathematics},
  volume={136},
  number={3},
  pages={447--510},
  year={1992},
  publisher={JSTOR}
}

@article{mess1988seifert,
  title={The {Seifert} conjecture and groups which are coarse quasiisometric to planes},
  author={Mess, Geoffrey},
  journal={preprint},
  year={1988}
}

@article{tukia1988homeomorphic,
  title={Homeomorphic conjugates of {Fuchsian} groups.},
  author={Tukia, Pekka},
  journal={Journal f{\"u}r die reine und angewandte Mathematik},
  volume={390},
  pages={1--54},
  year={1988}
}

@article{maillot2001quasi,
  title={Quasi-isometries of groups, graphs and surfaces},
  author={Maillot, Sylvain},
  journal={Commentarii Mathematici Helvetici},
  volume={76},
  pages={29--60},
  year={2001},
  publisher={Springer}
}

@article{papasoglu2002quasi,
  title={Quasi-isometries between groups with infinitely many ends},
  author={Papasoglu, Panos and Whyte, Kevin},
  journal={Commentarii Mathematici Helvetici},
  volume={77},
  number={1},
  pages={133--144},
  year={2002},
  publisher={Springer}
}

@article{arzhantseva2004cayley,
  title={On the {Cayley} graph of a generic finitely presented group},
  author={Arzhantseva, Goulnara N and Cherix, P-A},
  journal={Bulletin of the Belgian Mathematical Society-Simon Stevin},
  volume={11},
  number={4},
  pages={589--601},
  year={2004},
  publisher={The Belgian Mathematical Society}
}

@article{davies2025string,
  title={String graphs are quasi-isometric to planar graphs},
  author={Davies, James},
  journal={arXiv preprint arXiv:2510.19602},
  year={2025}
}

@article{macmanus2025vertex,
  title={Vertex-transitive graphs with uniformly bisecting quasi-geodesics},
  author={MacManus, Joseph Paul},
  journal={arXiv preprint arXiv:2511.10759},
  year={2025}
}

@article{macmanus2025fat,
  title={Fat minors in finitely presented groups},
  author={MacManus, Joseph Paul},
  journal={Combinatorica},
  volume={45},
  number={4},
  pages={40},
  year={2025},
  publisher={Springer}
}

@article{woess1994topological,
  title={Topological groups and recurrence of quasi transitive graphs},
  author={Woess, Wolfgang},
  journal={Rendiconti del Seminario Matematico e Fisico di Milano},
  volume={64},
  number={1},
  pages={185--213},
  year={1994},
  publisher={Springer}
}

@article{bonk2002rigidity,
  title={Rigidity for quasi-{M}{\"o}bius group actions},
  author={Bonk, Mario and Kleiner, Bruce},
  journal={Journal of Differential Geometry},
  volume={61},
  number={1},
  pages={81--106},
  year={2002},
  publisher={Lehigh University}
}

@book {Hocking-Young-Topology,
    AUTHOR = {Hocking, John G. and Young, Gail S.},
     TITLE = {Topology},
   EDITION = {Second},
 PUBLISHER = {Dover Publications, Inc., New York},
      YEAR = {1988},
     PAGES = {x+374},
      ISBN = {0-486-65676-4},
   MRCLASS = {54-01 (55-01)},
  MRNUMBER = {1016814},
}

@article{hamann2025tree,
  title={Tree amalgamations and quasi-isometries},
  author={Hamann, Matthias},
  journal={Journal of Combinatorial Theory, Series B},
  volume={173},
  pages={83--101},
  year={2025},
  publisher={Elsevier}
}

@article{Esperet_Giocanti_2024, title={Coarse Geometry of Quasi-Transitive Graphs Beyond Planarity}, volume={31}, url={https://www.combinatorics.org/ojs/index.php/eljc/article/view/v31i2p41}, DOI={10.37236/12661}, abstractNote={&lt;p&gt;We study geometric and topological properties of infinite graphs that are quasi-isometric to a planar graph of bounded degree. We prove that every locally finite quasi-transitive graph excluding a minor is quasi-isometric to a planar graph of bounded degree. We use the result to give a simple proof of the result that finitely generated minor-excluded groups have Assouad-Nagata dimension at most 2 (this is known to hold in greater generality, but all known proofs use significantly deeper tools). We also prove that every locally finite quasi-transitive graph that is quasi-isometric to a planar graph is $k$-planar for some $k$ (i.e. it has a planar drawing with at most $k$ crossings per edge), and discuss a possible approach to prove the converse statement.&lt;/p&#38;gt;}, number={2}, journal={The Electronic Journal of Combinatorics}, author={Esperet, Louis and Giocanti, Ugo}, year={2024}, pages={P2.41} }

@article{georgakopoulos2023planar,
  title={{The planar Cayley graphs are effectively enumerable II}},
  author={Georgakopoulos, Agelos and Hamann, Matthias},
  journal={European Journal of Combinatorics},
  volume={110},
  pages={103668},
  year={2023},
  publisher={Elsevier}
}

@article{esperet2024structure,
  title={The structure of quasi-transitive graphs avoiding a minor with applications to the domino problem},
  author={Esperet, Louis and Giocanti, Ugo and Legrand-Duchesne, Cl{\'e}ment},
  journal={Journal of Combinatorial Theory, Series B},
  volume={169},
  pages={561--613},
  year={2024},
  publisher={Elsevier}
}

@article{grigoriev2007algorithms,
  title={Algorithms for graphs embeddable with few crossings per edge},
  author={Grigoriev, Alexander and Bodlaender, Hans L},
  journal={Algorithmica},
  volume={49},
  number={1},
  pages={1--11},
  year={2007},
  publisher={Springer}
}

@article{hopcroft1974efficient,
  title={Efficient planarity testing},
  author={Hopcroft, John and Tarjan, Robert},
  journal={Journal of the ACM (JACM)},
  volume={21},
  number={4},
  pages={549--568},
  year={1974},
  publisher={ACM New York, NY, USA}
}

@inproceedings{ringel1965sechsfarbenproblem,
  title={Ein {Sechsfarbenproblem} auf der kugel},
  author={Ringel, Gerhard},
  booktitle={Abhandlungen aus dem Mathematischen Seminar der Universit{\"a}t Hamburg},
  volume={29},
  number={1},
  pages={107--117},
  year={1965},
  organization={Springer}
}

@article{pach1997graphs,
  title={Graphs drawn with few crossings per edge},
  author={Pach, J{\'a}nos and T{\'o}th, G{\'e}za},
  journal={Combinatorica},
  volume={17},
  number={3},
  pages={427--439},
  year={1997},
  publisher={Springer}
}

@article{fujiwara2007note,
  title={A note on spaces of asymptotic dimension one},
  author={Fujiwara, Koji and Whyte, Kevin},
  journal={Algebraic \& Geometric Topology},
  volume={7},
  number={2},
  pages={1063--1070},
  year={2007},
  publisher={Mathematical Sciences Publishers}
}

@article{fujiwara2021asymptotic,
  title={Asymptotic dimension of planes and planar graphs},
  author={Fujiwara, Koji and Papasoglu, Panos},
  journal={Transactions of the American Mathematical Society},
  volume={374},
  number={12},
  pages={8887--8901},
  year={2021}
}

@article{jorgensen2022geodesic,
  title={Geodesic spaces of low {Nagata} dimension},
  author={J{\o}rgensen, Martina and Lang, Urs},
  journal={Annales Fennici Mathematici},
  volume={47},
  pages={83--88},
  year={2022}
}

@article{manning2005geometry,
  title={Geometry of pseudocharacters},
  author={Manning, Jason Fox},
  journal={Geometry \& Topology},
  volume={9},
  number={2},
  pages={1147--1185},
  year={2005},
  publisher={Mathematical Sciences Publishers}
}

\end{document}